\newcolumntype{C}[1]{>{\centering\arraybackslash}m{#1}}
\numberwithin{equation}{section}
\newtheorem{thm}[equation]{Theorem} 
\newtheorem{prop}[equation]{Proposition} 
\newtheorem{lem}[equation]{Lemma} 
\newtheorem{cor}[equation]{Corollary} 
\newtheoremstyle{named}{}{}{\itshape}{}{\bfseries}{.}{.5em}{#3}
\theoremstyle{named} 
\newtheorem*{namedtheorem}{Theorem} 
\theoremstyle{remark}
\newtheorem{rem}[equation]{Remark} 
\theoremstyle{definition}
\newtheorem{defi}[equation]{Definition} 
\numberwithin{table}{section}
\newcommand{\Xns}{X_{\textnormal{ns}}}
\newcommand{\Xs}{X_{\textnormal s}}
\newcommand{\ol}{\overline}
\newcommand{\tx}{\textnormal}
\newcommand{\Q}{\mathbb Q}
\newcommand{\Z}{\mathbb{Z}}
\newcommand{\C}{\mathbb{C}}
\newcommand{\R}{\mathbb R}
\newcommand{\F}{\mathbb F}
\newcommand{\HH}{\mathbb{H}}
\newcommand{\PP}{\mathbb P}
\newcommand{\calO}{\mathcal{O}}
\newcommand{\End}{\mathrm{End}}
\newcommand{\Gal}{\mathrm{Gal}}
\newcommand{\Aut}{\mathrm{Aut}}
\newcommand{\GL}{\mathrm{GL}_2}
\newcommand{\GLn}{\mathrm{GL}_2(\mathbb Z/n\mathbb Z)}
\newcommand{\SLpe}{\mathrm{SL}_2(\mathbb Z/p^e\mathbb Z)}
\newcommand{\ClZ}{\widetilde{{\mathrm{Cl}}}_{\mathfrak m}}
\newcommand{\new}{\textnormal{new}}
\newcommand{\ns}{\textnormal{ns}}
\newcommand{\s}{\textnormal{s}}
\newcommand{\CM}{\textnormal{C}}
\newcommand{\smt}[4]{\left( \begin{smallmatrix} #1 &#2\\ #3 &#4\\ \end{smallmatrix} \right) }
\newcommand{\lto}{\longrightarrow}
\begin{document}
\title{Automorphisms of Cartan modular curves of prime and composite level}

\author{Valerio Dose}
\email{vdose@luiss.it}
\address{Dipartimento di Economia e Finanza, LUISS "Guido Carli", Viale Romania 32, 00197 Roma, Italy}

\author{Guido Lido}
\email{guidomaria.lido@gmail.com}
\address{Universit\`a ``Tor Vergata'', Rome 00133, Italy}

\author{Pietro Mercuri}
\email{mercuri.ptr@gmail.com}
\address{Universit\`a ``Sapienza'', Rome 00161, Italy}

\thanks{The third author has been supported by the research grant ``Ing. Giorgio Schirillo'' of the Istituto Nazionale di Alta Matematica ``F. Severi'', Rome. We would also like to thank Claudio Stirpe, for giving us access to some of his source code which we used for this work, and the anonymous Referee, for the very careful reading and for the many useful comments}

\subjclass[2010]{14G35,11G18,11G30,11G05,11G15}
\keywords{modular curves, elliptic curves, complex multiplication, automorphisms}

\begin{abstract}
We study the automorphisms of modular curves associated to Cartan subgroups of $\GLn$ and certain subgroups of their normalizers. We prove that if $n$ is large enough, all the automorphisms are induced by the ramified covering of the complex upper half-plane. We get new results for non-split curves of prime level $p\ge 13$: the curve $X_\ns^+(p)$ has no non-trivial automorphisms, whereas the curve $X_\ns(p)$ has exactly one non-trivial automorphism. Moreover, as an immediate consequence of our results we compute the automorphism group of $X_0^*(n):=X_0(n)/W$, where $W$ is the group generated by the Atkin-Lehner involutions of $X_0(n)$ and $n$ is a large enough square.
\end{abstract}

\maketitle

\section*{Introduction}

Since the 1970s many efforts have been made to determine automorphisms of modular curves and in particular to establish whether a modular curve has other automorphisms besides the expected ones. Indeed, infinite automorphisms naturally arise when the curve has genus zero or one. Moreover, since the components of modular curves over $\C$ can be seen as compactification of quotients of the complex upper half-plane $\mathbb H$, some automorphisms of $\mathbb H$ induce automorphisms of the quotient modular curve. Such automorphisms are called \textit{modular} and their determination is a purely group theoretic problem.

The focus has been classically placed on the modular curves $X_0(n)$ associated to a Borel subgroup of $\GLn$ (e.g., upper triangular matrices), with $n$ a positive integer. For these curves, modular automorphisms played an important role in the development of the theory of modular curves. They were determined in the seminal paper \cite{AL}, with a small gap which was later filled in a couple of different ways (see \cite{ASNormGamma0}, \cite{BarsNormGamma0}). Meanwhile, a complete picture about the remaining automorphisms of $X_0(n)$ has been painted through the decades by the works \cite{OggPrime}, \cite{OggDe}, \cite{KM}, \cite{Elkies}, \cite{Harrison108}. Also some works in this century (e.g., \cite{BakerHas}, \cite{Mer0}, \cite{GonzSplit}) took on the case of the modular curves $X_0(p)/\langle w_p\rangle$ and $X_0(p^2)/\langle w_{p^2}\rangle$, where $w_p$ and $w_{p^2}$ are the Atkin-Lehner involutions of the respective modular curve.

More recently, great interest has been generated in modular curves associated to different subgroups of $\GLn$, in particular to normalizers of Cartan subgroups for $n=p$ prime. This is mainly due to the fact that rational points on these curves help classifying rational elliptic curves whose associated Galois representation modulo $p$ is not surjective. This is directly linked to a question formulated by Serre (also known as \textit{uniformity conjecture}) in the 1970s (\cite{Ser}). After the works \cite{MazurRI}, on the Borel case, and \cite{BPsplit}, \cite{BPR}, on the \textit{split} Cartan case, the only part of this problem left to understand nowadays is equivalent to asking whether, for almost every prime $p$, the modular curve $\Xns^+(p)$ associated to the normalizer of a \textit{non-split} Cartan subgroup of $\text{GL}_2(\mathbb Z/p\mathbb Z)$ has other rational points besides the expected ones, namely the CM points of class number one. This equivalence led to a certain amount of research driven towards computing equations and finding rational points of modular curves associated to non-split Cartan subgroups and their normalizers (see for example \cite{Baran9}, \cite{Baran13}, \cite{ExplCK}, \cite{DFGS}, \cite{DMS}, \cite{Merns}).

A curious connection between the problem of determining rational points and the one of determining automorphisms in a modular curve is given by the fact that in the case of the Borel modular curves $X_0(p)$ of genus at least 2, the sole occurrence of unexpected rational points ($p=37$) in the setting of Serre's uniformity conjecture, happens in the presence of an unexpected automorphism of the corresponding modular curve. A further connection is made in \cite{DoseAut}, where is proven that, for $p\ge29$, the absence of unexpected rational points of the curve $\Xns^+(p)$ implies the absence of unexpected rational automorphisms of the modular curve $\Xns(p)$ associated to a non-split Cartan subgroup of $\text{GL}_2(\mathbb Z/p\mathbb Z)$.

The first work centered on automorphisms of non-split Cartan modular curves has been \cite{DFGS}, in which the existence of an unexpected automorphism of $\Xns(11)$ is proven. Some partial results on the automorphisms of $\Xns(p)$ and $\Xns^+(p)$, for almost every prime $p$, were proven in \cite{DoseAut}, while in \cite{GonzConst} the full determination of the automorphism group is obtained for low primes ($p\le 31$). 

In the present work, we prove unconditionally that every automorphism of $\Xns(p)$ and $\Xns^+(p)$ is modular for $p\ge 13$. In fact, we also extend this to composite level $n$ where we can define Cartan subgroups of mixed split/non-split type. The scope of our study concerns Cartan subgroups and also a specific subgroup of their normalizer in $\GLn$ which we call \textit{Cartan-plus} subgroup. However, in most cases, for example when $n$ is odd, a Cartan-plus subgroup actually coincides with the normalizer of the relative Cartan subgroup. We prove the following result:

\begin{namedtheorem}[\Cref{thm:Cartan_aut}]
Let $n\ge 10^{400}$ be an integer and let $H<\GLn$ be either a Cartan or a Cartan-plus subgroup. Then every automorphism of $X_H$ is modular, hence we have
\begin{align*}
\Aut(X_H) &\cong \begin{cases}  N'/H' \times \Z/2\Z, & \tx{if } n\equiv 2 \bmod 4 \tx{ and $H$ is a Cartan-plus  split at }2, \\
N'/H', & \tx{otherwise,}
\end{cases}
\end{align*}
where $N'<\mathrm{SL}_2(\Z/n\Z)$ is the normalizer of $H':=H\cap\tx{SL}_2(\Z/n\Z)$.
\end{namedtheorem}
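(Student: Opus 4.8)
The plan is to prove the two inclusions separately: the modular automorphisms always lie in $\Aut(X_H)$, and the whole point is to show that for $n$ this large there are no others. First I would verify that the modular automorphisms are exactly the group on the right-hand side. Over $\C$ the curve is $X_H=\Gamma_H\backslash\HH^*$ with $\Gamma_H$ the preimage in $\SLZ$ of $H':=H\cap\SLn$, so the automorphisms induced by $\HH$ form $N_{\SLZ}(\Gamma_H)/\Gamma_H=N'/H'$; a prime-by-prime analysis of the local Cartan/Cartan-plus components computes this group and, in the case $n\equiv 2\bmod 4$ with $H$ a Cartan-plus split at $2$, produces one further involution, accounting for the extra factor $\Z/2\Z$. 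This settles the easy inclusion and fixes the target group.

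For the reverse inclusion I would first turn the problem into a finite, linear one. By the linear lower bound for the gonality of congruence modular curves (Abramovich), the gonality of $X_H$ grows at least linearly in the index $[\GLn:H]$; hence for $n\ge 10^{400}$ the genus is $\ge 2$, so $\Aut(X_H)$ is finite, and $X_H$ is non-hyperelliptic, so the canonical map embeds it and $\Aut(X_H)$ acts faithfully (projectively) on $H^0(X_H,\Omega^1)\cong S_2(\Gamma_H)$. Since $X_H$ is defined over a number field and $\Aut(X_H)$ is finite, any $u\in\Aut(X_H)$ is defined over some number field $K$; I would spread $u$ out over a ring of $S$-integers of $K$ so that it can be reduced at primes of good reduction.

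The arithmetic core is to force $u$ to respect the Hecke structure. For a prime $\ell$ splitting completely in $K$ and of good reduction, $u$ reduces to an automorphism $\bar u$ defined over the prime field, so $\bar u_*$ commutes with the Frobenius and the Verschiebung of $\mathrm{Jac}(X_H)\bmod\ell$; by the Eichler--Shimura relation $T_\ell\equiv\mathrm{Frob}+\mathrm{Ver}$ and the injectivity of reduction on $\End(\mathrm{Jac}(X_H))$, this gives $u_*\,T_\ell=T_\ell\,u_*$ for a set of $\ell$ of positive density. By strong multiplicity one the new part $S_2(\Gamma_H)^{\new}$ splits into one-dimensional common eigenspaces for such $T_\ell$, so $u^*$ must preserve each of them and act there by a scalar; matching these scalars against those of the diamond and Atkin--Lehner operators, and then using that $u$ also permutes the cusps and is compatible with the degeneracy maps on the old part, I would identify $u$ with a modular automorphism. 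The lone exception is the extra involution at $2$, which I would isolate by a direct local computation of the Cartan-plus structure at the prime $2$.

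The hard part will be the uniformity over all $n\ge 10^{400}$ and the passage from ``$u^*$ commutes with Hecke'' to ``$u$ is modular''. The implication genuinely fails at small level --- for instance $\Xns(11)$ carries an exceptional automorphism --- so one cannot avoid the quantitative input: controlling the field of definition $K$ and guaranteeing enough split primes $\ell$, handling the old part where multiplicity one is unavailable, and bounding uniformly in $n$ the contribution of the cusps and of the finitely many small-discriminant CM points that $u$ could permute. I expect essentially all of the difficulty to sit in these estimates, rather than in the geometric reduction or the final group theory, and this is what forces the enormous explicit bound on $n$.
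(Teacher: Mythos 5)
There is a genuine gap, and it sits exactly where you deferred the difficulty. Your chain ends with ``matching these scalars against those of the diamond and Atkin--Lehner operators, and then using that $u$ also permutes the cusps \dots I would identify $u$ with a modular automorphism.'' But on a Cartan curve the diamond operators are trivial (a Cartan or Cartan-plus $H$ contains the scalar matrices), the new part of $\mathrm{Jac}(X_H)$ occurs with multiplicities (Theorem~\ref{cor:Chen_too_general}), so eigenvalue-matching and multiplicity one cannot carry the argument; and, most importantly, the statement that $u$ permutes the cusps is not something you get from $u_*T_\ell=T_\ell u_*$ in $\End(\mathrm{Jac}(X_H))$ --- it is one of the two main things that must be proved. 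Worse, cusp preservation alone is still not enough to conclude modularity: one must also show that $u$ preserves the branch points of $\HH\to X_H(\C)$, i.e.\ the points $(E_i,\phi)$ and $(E_\rho,\phi)$ whose extra automorphism lies in $\pm H$ (Proposition~\ref{PropRamPoint}); without this one cannot lift $u$ to $\HH$, and precisely this obstruction confined \cite{DoseAut} to levels with no elliptic points. The paper's key idea, absent from your proposal, is that both sets are detected by the \emph{multiplicities} in the divisor $T_\ell(P)$ (Theorem~\ref{theo:repetitions}: a point of multiplicity $\ge 4$ iff $P$ is a cusp, multiplicity $3$ iff $P$ is a $j=0$ branch point, two points of multiplicity $2$ iff $P$ is a $j=1728$ branch point), combined with the identity $T_\ell u(P)=u^\sigma T_\ell(P)$ \emph{at the level of divisors} (Proposition~\ref{uandTlprop}), after which \cite[Proposition 3.1]{DoseAut} lifts $u$ to $\HH$ and Remark~\ref{PGL2Q-R} puts the lift in $\tx{PGL}_2^+(\Q)$.

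The second gap is that your control of the field of definition cannot feed this mechanism. Taking an arbitrary number field $K$ and primes $\ell$ split in $K$ gives commutation only for a positive-density set of primes of uncontrolled size, whereas the divisor-level identity above requires a \emph{small} prime ($\ell=5$ or $7$, coprime to $n$, with $\ell<\tfrac12\tx{gon}(X_H)-1$) so that the relevant principal divisors of degree $\le 2(\ell+1)$ are forced to vanish. The paper gets this by first proving (Proposition~\ref{prop:field_Cartan(n)}) that every automorphism is defined over a compositum of quadratic fields of discriminant dividing $n$ --- so that $\sigma$ and $\sigma^{-1}$ agree on $u$ for \emph{every} Frobenius $\sigma$ at $\ell\nmid n$ --- and this rests on Lemma~\ref{JacFactorEnd}, on Chen-type isogenies realizing $\mathrm{Jac}(X_H)$ inside $J_0(n^2)$ (Theorem~\ref{cor:Chen_too_general}), and on the explicit CM-part bound (Proposition~\ref{prop:g^c_0(N)}) played against the genus bound (Proposition~\ref{prop:genus_bounds}); this is exactly where $10^{400}$ comes from. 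Finally, a smaller point on your ``easy inclusion'': computing $N_{\SLZ}(\Gamma_H)/\Gamma_H$ is not sufficient, since modular automorphisms come from the normalizer of $\PP(\Gamma_H)$ in all of $\tx{PGL}_2^+(\Q)$, which a priori could be strictly larger; Proposition~\ref{prop:Cartan_normalizer_in_GL2Q} is needed to show it is not, except for the extra involution when $n\equiv 2\bmod 4$ and $H$ is Cartan-plus split at $2$.
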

The huge bound of $10^{400}$ comes from \Cref{prop:field_Cartan(n)} hence from our estimates of the dimension of the CM part of $J_0(n)$. However, explicit computations can make the method work for low levels (see \Cref{tab:lowlevels} in the Appendix). It may be interesting to note that the modular curve associated to a Cartan-plus subgroup of $\GLn$ which is split at every prime dividing $n$ is isomorphic to the modular curve $X_0^*(n^2):=X_0(n^2)/W$, where $W$ is the group generated by Atkin-Lehner involutions of the Borel curve $X_0(n^2)$. For these curves, the case where $n$ is squarefree  has been recently determined in \cite{BarsGonzAut}.

In the case $n=p^e$, where $p$ is a prime number, we can refine the techniques developed and obtain a more complete result:

\begin{namedtheorem}[\Cref{thm:powerprimes}]
Let $p$ be a prime number and let $e$ be a positive integer. If $p^e>11$ and $p^e\notin\{3^3, 2^4, 2^5, 2^6\}$, then all the automorphisms of $X_\ns(p^e), X_\ns^+(p^e), X_\tx{s}(p^e)$ and $X_\tx{s}^+(p^e)$ are modular and
\[
\begin{aligned}
 \Aut(X_\ns(p^e)) &\cong \Z/2\Z, &
 \Aut(X_\ns^+(p^e)) &\cong \{1 \}, \\
 \Aut(X_\tx{s}(p^e)) &\cong \begin{cases}
(\Z/8\Z)^2 \rtimes (\Z/2\Z), & \tx{ if }p=2, \\
\Z/3\Z\times S_3, & \tx{ if }p=3,  \\
\Z/2\Z, & \tx{ if }p>3, \end{cases} &
 \Aut(X_\tx{s}^+(p^e)) &\cong \begin{cases}
\Z/8\Z, & \tx{ if }p=2, \\
\Z/3\Z, & \tx{ if }p=3,  \\
\{1\}, & \tx{ if }p>3,
\end{cases}
\end{aligned}
\]
where the above semidirect product $(\Z/8\Z)^2 \rtimes \Z/2\Z$ is described in Table \ref{table2}.
\end{namedtheorem}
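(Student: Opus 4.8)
The plan is to decouple the statement into two logically independent parts: first show that every automorphism of each of the four curves is modular, and then read off the group $N'/H'$ by an explicit computation in $\SLn$ with $n=p^e$. All the quantitative difficulty lives in the first part; the second is elementary once the normalizers are set up.

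For modularity I would reuse the engine behind \Cref{thm:Cartan_aut} rather than the theorem itself, whose hypothesis $n\ge 10^{400}$ is far too strong here. Under the stated bounds each of the four curves has genus $\ge 2$, so an automorphism $u$ of $X_H$ induces an automorphism $u^*$ of the Jacobian $J_H$ preserving the principal polarization, and the crux is to prove that $u^*$ respects the Hecke-isotypic decomposition of $J_H$ and in particular stabilizes its CM part. Since a CM abelian subvariety has a rigid, abelian endomorphism algebra, once $u^*$ fixes the CM part the dimension inequality controlling it forces $u$ to normalize the modular structure, i.e. to be modular. The quantitative input is \Cref{prop:field_Cartan(n)}, which bounds the dimension of the CM part of the relevant modular Jacobian through the fields of definition of CM points; this is exactly where the $10^{400}$ of \Cref{thm:Cartan_aut} originates.

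The key refinement at prime-power level is that this CM estimate becomes essentially explicit, because the imaginary quadratic orders contributing CM points are constrained by the single ramified prime $p$. I would therefore compute the CM dimension directly as a function of $p$ and $e$ and verify that the inequality required by the engine already holds once $p^e>11$. The finitely many borderline levels $p^e\in\{3^3,2^4,2^5,2^6\}$, together with the genus-$\le 1$ range $p^e\le 11$ where $\Xns(11)$ famously carries an unexpected automorphism \cite{DFGS}, are precisely those where the genus is too small or the estimate degenerates; these I would excise and, where a positive statement is still wanted, certify by the explicit computations of canonical models summarised in \Cref{tab:lowlevels}.

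Granting modularity, $\Aut(X_H)\cong N'/H'$, and the extra $\Z/2\Z$ of \Cref{thm:Cartan_aut} never appears here since $p^e>11$ forces $4\mid n$ as soon as $p=2$. For $p>3$ the Cartan subgroup of $\SLn$ has normalizer equal to itself extended by a single involution exchanging the two eigenlines (respectively realising the non-trivial automorphism of the quadratic algebra), giving $\Z/2\Z$ for $\Xns(p^e)$ and $\Xs(p^e)$ and the trivial group for the plus-curves. The larger groups at $p=2,3$ come from the fact that the Cartan subgroup of $\SLn$ degenerates modulo $p$, becoming central or nearly central, so its normalizer is abnormally large and this enlargement propagates up the tower $\Z/p^e\Z$; I would compute these lifts explicitly, recovering $(\Z/8\Z)^2\rtimes\Z/2\Z$ and $\Z/8\Z$ for $p=2$ (the action recorded in Table \ref{table2}) and $\Z/3\Z\times S_3$ and $\Z/3\Z$ for $p=3$. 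The hard part will be the modularity step: squeezing the bound down to $p^e>11$ demands a sharp, $e$-uniform yet effective lower bound on the CM dimension in place of the crude general estimate, together with a clean certification of the handful of excluded levels, where genuinely non-modular automorphisms intrude.
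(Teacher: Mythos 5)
Your proposal has a genuine gap at its core: you conflate the field-of-definition step with the modularity step, and the latter --- the paper's main technical novelty --- is missing. The CM-part dimension inequality, fed into \Cref{JacFactorEnd}, only proves that every automorphism of $X_H$ is defined over the small field $K_p$ (\Cref{prop:field_Cartan(p^e)}); it does not ``force $u$ to normalize the modular structure''. Indeed an automorphism defined over $\Q$ can perfectly well be non-modular: the exceptional automorphism of $\Xns(11)$ from \cite{DFGS} is such an example. Converting ``defined over a compositum of quadratic fields'' into ``modular'' requires the entirely separate mechanism of \Cref{lem:aut_general_H}: one uses the Eichler--Shimura relation to get $T_\ell\circ u=u^\sigma\circ T_\ell$ at the level of divisors (\Cref{uandTlprop}, which needs $2(\ell+1)<\tx{gon}(X_H)$), then the characterization of cusps and branch points by the multiplicity patterns occurring in $T_\ell(P)$ (\Cref{theo:repetitions}) to conclude that $u$ preserves both sets, and finally \cite[Proposition 3.1]{DoseAut} to lift $u$ to an automorphism of $\HH$. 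Since your outline never mentions Hecke operators, the preservation of cusps and elliptic branch points, or a gonality lower bound (the paper obtains one for $p^e>49$, $p^e\neq 64$, from \cite{AbrGonality} and Table \ref{table1}), the modularity step as you describe it would simply fail.

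A second, smaller defect concerns the small levels. Even with the correct engine, the general argument only applies when $p^e>49$ and $p^e\neq 64$, because one needs a prime $\ell\in\{5,7\}$ with $\ell\neq p$ and $2(\ell+1)$ below the gonality bound; the whole range $11<p^e\le 49$ is \emph{not} covered by hoping ``the inequality holds once $p^e>11$''. The paper disposes of this range by citing \cite{KM} (for $X_\tx{s}(p^e)\cong X_0(p^{2e})$), \cite{GonzSplit} (for $X_\tx{s}^+(p)$), \cite{GonzConst} (for $X_\ns(p)$, $X_\ns^+(p)$ with $13\le p\le 31$), and MAGMA computations \cite{Script_aut} for the remaining cases $X_\tx{s}^+(25)$, $X_\tx{s}^+(49)$ and the non-split curves of level $25,37,41,43,47,49$. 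Your plan treats only the four excluded levels and the genus $\le 1$ range as exceptional, so this finite but substantial verification layer is absent. By contrast, the group-theoretic half of your proposal (modular automorphisms isomorphic to $N'/H'$, the explicit normalizer computations distinguishing $p=2,3$ from $p>3$, and the observation that the extra $\Z/2\Z$ of \Cref{thm:Cartan_aut} cannot occur for prime powers $>11$) is correct and matches \Cref{prop:Cartan_normalizer_in_GL2Q} and \Cref{lem:N'}.
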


\begin{namedtheorem}[\Cref{cor:aut_prime_level}]
Let $p \ge 13$ be a prime number. Then the group of automorphisms of $X_\ns^+(p)$ is trivial and the group of automorphisms of $X_\ns(p)$ has order $2$.
\end{namedtheorem}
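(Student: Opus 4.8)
The plan is to obtain this statement as the special case $e=1$ of \Cref{thm:powerprimes}, so that essentially no new work is required beyond a verification of the hypotheses. First I would observe that a prime $p\ge 13$ is in particular a prime power $p^e$ with $e=1$, and that $p=p^1\ge 13>11$; hence the first hypothesis of \Cref{thm:powerprimes} is satisfied.

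Next I would dispose of the exceptional set. The values $3^3=27$, $2^4=16$, $2^5=32$ and $2^6=64$ are all proper prime powers, hence composite, so no prime can equal any of them; in particular a prime $p\ge 13$ never lies in $\{3^3,2^4,2^5,2^6\}$. Both hypotheses of \Cref{thm:powerprimes} therefore hold for every prime $p\ge 13$.

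Applying \Cref{thm:powerprimes} with $e=1$ and reading off the case $p>3$ of its conclusion then yields $\Aut(X_\ns^+(p))\cong\{1\}$ and $\Aut(X_\ns(p))\cong\Z/2\Z$, which is exactly the assertion that $\Aut(X_\ns^+(p))$ is trivial and $\Aut(X_\ns(p))$ has order $2$. Since the whole content is inherited from \Cref{thm:powerprimes}, there is no genuine obstacle at this stage: all the difficulty has already been confronted in proving that theorem, namely showing that every automorphism is modular and then carrying out the group-theoretic computation of the relevant quotient $N'/H'$. The only point requiring any attention is the bookkeeping of the exceptional prime powers, which is immediate here because each of them is composite and so irrelevant to the prime-level case.
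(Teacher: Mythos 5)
Your proposal is correct and is exactly the paper's (implicit) argument: the paper states this corollary as an immediate specialization of \Cref{thm:powerprimes} to $e=1$, with the exceptional prime powers $27, 16, 32, 64$ being composite and hence irrelevant. The only cosmetic slip is that the conclusions for $X_\ns(p^e)$ and $X_\ns^+(p^e)$ in \Cref{thm:powerprimes} carry no case distinction on $p$ (the cases $p=2,3,{>}3$ concern only the split curves), but this does not affect anything since $p\ge 13$ anyway.
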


The idea of the proof is the following. We start by showing that an automorphism $u$ of $X_H$ is defined over some explicit compositum of quadratic fields. This is done by studying the endomorphisms of the jacobian of $X_H$, which, through an isogeny relation, can be seen as endomorphisms of the well known modular jacobians $J_0(n)$. The fact that $u$ is defined over a compositum of quadratic fields, together with the Eichler-Shimura Relation, allow us to show that $u$ ``almost" commutes with the Hecke operators. We can then describe the action of Hecke operators on cusps and branching points of the cover $\mathbb H\rightarrow X_H$, and prove that $u$ preserves both these sets. This allows to lift $u$ to an automorphism of $\mathbb H$.

The main technical novelty of our proofs is the analysis of the action of Hecke operators mentioned above, which permits us to prove the result about automorphisms without exploiting and worrying about the field of definition of the cusps and CM points, which has been instead instrumental for determining automorphisms of modular curves throughout the literature in the past. In fact, both \cite{KM} and \cite{DoseAut} use the field of definition of the cusps to prove that an automorphism must preserve the set of cusps. In \cite{KM}, this is enough to exclude the existence of non-modular automorphisms, in combination with the rich action of the modular automorphisms of $X_0(n)$ on the set of cusps. In \cite{DoseAut}, the lack of the preservation result on the set  of branching points confines the analysis to the levels where there are no branching points at all. We also give à la Chen results to describe jacobians of Cartan modular curves in terms of jacobians of Borel modular curves and we give an explicit upper bound on the dimension of the CM part of the jacobian of Borel modular curves.

The structure of the paper is the following.

In Section \ref{Sec:ModularCurves} we define modular curves associated to general subgroups of $\GLn$ and we give an equivalent condition to the fact that a point of a modular curves branches in the covering of the curve by $\mathbb H$. 

In Section \ref{Sec:Hecke} we study the action of Hecke operators on modular curves. In particular we focus on the action on the cusps and the other points which could branch in the covering by $\mathbb H$. Such points are associated to elliptic curves with $j$-invariant equal to $0$ or $1728$. 

In Section \ref{Sec:Cartan} we define Cartan and Cartan-plus subgroups of $\GLn$ for every positive integer $n$. We also define the relative modular curves of composite level. Then we prove that the jacobian of a Cartan modular curve is a quotient of the jacobian of some Borel modular curve. When $n=p^e$, this is done applying the techniques of \cite{ChenPowers} and \cite{Edix} to a previously unexplored case, and for $n$ general we combine these results. We also extend the results of \cite{ChenPowers} to the case of even level. 

In Section \ref{Sec:FieldDef} we prove that all the automorphisms of Cartan modular curves must be defined on a compositum of quadratic fields when the level $n$ is large enough. To do this, we use a geometrical criterion that we can apply by bounding the dimension of the CM part of the jacobian of Cartan modular curves. This last step is obtained using the isogenies of Section \ref{Sec:Cartan} and computing explicit bounds for the CM part of the jacobians of Borel modular curves. Furthermore, we refine the results in the case $n=p^e$, with $p$ prime.

Finally, in Section \ref{Sec:Aut} we prove the results stated above about automorphisms. After showing that each automorphism must preserve the cusps and the set of branching points of the covering by $\mathbb H$, we prove that there are no non-modular automorphisms. Thus, we compute the modular automorphisms to complete the analysis and we discuss their field of definition. We first concentrate on Cartan modular curves of general level $n$. Then we adapt the strategy to the case $n=p^e$, with $p$ prime, giving the complete result for $\Xns(p)$ and $\Xns^+(p)$, and improving the result we obtained for the general level in the cases of $\Xs^+(p^e)$, $\Xns(p^e)$ and $\Xns^+(p^e)$. To treat some of the small level cases, we use the criterion of \cite{GonzConst} and some ad hoc arguments which we verify through an algorithm implemented in MAGMA (\cite{MAGMA}) which is available at \cite{Script_aut}. 

As we did for the case of level $n=p^e$, with $p$ prime in \Cref{thm:powerprimes}, the result on Cartan modular curves of composite level can be sharpened, with our techniques, for levels with a specific type of factorization. However, certain cases remain out of the reach of the strategy described in this work: for example when we are not able to complete the argument using the criterion of \cite{GonzConst} and either we have a low gonality lower bound for the modular curve (e.g., $\Xns(16)$, $\Xns^+(27)$, $\Xns(27)$, $\Xns^+(32)$, $\Xns(32)$, $\Xns^+(64)$) or its jacobian has a large CM part relative to its dimension (see \Cref{rem:48} for the example with the lowest level). A table with the relevant data for the totally split or totally non-split curves of level $n\le 64$, and the description of a few cases having exceptional automorphisms, can be found in the Appendix.

\section{Modular curves}\label{Sec:ModularCurves}
Let $n$ be a positive integer. We denote by $Y(n)$ the (coarse if $n<3$) moduli space that parametrizes pairs $(E, \phi)$ where $E$ is an elliptic curve over a $\Q$-scheme $S$ and $\phi\colon (\Z/n\Z)_S^2 \to E[n]$ is an isomorphism of $S$-group schemes. We denote by $X(n)$ the compactification of $Y(n)$ and we call $X(n)$ the \emph{modular curve of full level $n$}.

Every matrix $\gamma \in \GLn$ gives an automorphism of the constant group scheme $(\Z/n\Z)_S^2$, hence $\gamma$ acts on $Y(n)$ sending $(E, \phi)$ to $(E, \phi \circ \gamma )$. This defines an action of $\GLn$ on $Y(n)$ that extends uniquely to $X(n)$. For each subgroup $H$ of $\GLn$, let $X_H$ be the quotient $X(n)/H$.
By \cite[IV.6.7]{DelRap}, $X_H$ has good reduction over each prime that does not divide $n$ and the smooth model of $Y_H= Y(n)/H$ over $\Z[1/n]$ is a coarse moduli space for \emph{elliptic curves with $H$-structure}, i.e., the equivalence classes of pairs $(E, \phi)$ where $E$ is an elliptic curve over a $\Z[1/n]$-scheme $S$ and $\phi\colon (\Z/n\Z)_S^2 \to E[n]$ is an isomorphism of $S$-group schemes, and the equivalence relation is given by:
\begin{equation}\label{eq_equivalence_relation}
(E,\phi) \sim_H (E', \phi') \iff  (\phi')^{-1} \circ \iota|_{E[n]} \circ \phi = h, \text{ for some } h \in H \text{ and } \iota\colon E \overset{\sim }{\to} E'.
\end{equation}
In particular, for every algebraically closed field $K$ of characteristic $p \nmid n$, we have a bijection between $Y_H(K)$ and the set of elliptic curves over $K$ with $H$-structure. Note that Equation (\ref{eq_equivalence_relation}), for fields with characteristic $p \nmid n$, means that $(E,\phi) \sim_H (E', \phi')$ if and only if the matrix associated to the action of $\iota|_{E[n]} $ relatively to the $\Z/n\Z$-bases of $E[n]$ and $E'[n]$ defined via $\phi$ and $\phi'$, respectively, belongs to $H$.
\begin{rem}\label{rem:pmH}
Since $-1$ is an automorphism of every elliptic curve, then for every $H$, the curve $X_H$ is isomorphic to $X_{\pm H}$, where $\pm H := \{\pm \tx{Id}\} {\cdot} H < \GLn$. Hence, the equivalence relation (\ref{eq_equivalence_relation}) can be written as follows
\begin{equation*}
(E,\phi) \sim_H (E', \phi') \iff  (\phi')^{-1} \circ \iota|_{E[n]} \circ \phi = h, \text{ for some } h \in \pm H \text{ and } \iota\colon E \overset{\sim }{\to} E'.
\end{equation*}
\end{rem}

Let $\HH$ be the complex upper half-plane $\{ \tau \in \C: \mathrm{Im}(\tau)>0 \}$, let $\HH^\pm = \C - \R$ and moreover let $\ol{\HH} = \HH \cup \PP^1(\Q)$ and $\ol{\HH}^\pm = \HH^\pm \cup \PP^1(\Q)$ be their ``compactifications''. The group $\tx{GL}_2(\Z)$ acts on $\HH^\pm$ and $\ol{\HH}^\pm$ by M\"{o}bius transformations. Moreover, every $g$ in $\tx{GL}_2(\Z)$ acts on pairs $(z,\gamma H)\in\HH^\pm \times(\GLn/H)$ as $(g(z), \bar{g}^{-T} \gamma H)$, where $g(z)$ is the image of $z$ under the M\"{o}bius  transformation given by $g$ and $\bar{g}^{-T}$ is the transpose of the inverse of the reduction $\bar g$ of $g \bmod n$. This action gives canonical isomorphisms of Riemann surfaces
\begin{align}
\tx{GL}_2(\Z) \backslash \big(\HH^\pm \times(\GLn/H)\big) \lto Y_H(\C), \label{eq_complex_points_can} \\ 
\tx{GL}_2(\Z) \backslash \big(\ol{\HH}^\pm \times(\GLn/H)\big) \lto X_H(\C).\label{eq_complex_points_can_comp}
\end{align}
The isomorphism (\ref{eq_complex_points_can}) is equivalent to that one described in \cite[IV.5.3]{DelRap} and is given by $\tx{GL}_2(\Z)(\tau,\gamma H)\mapsto (E_{\tau},\phi_{\tau}\circ\gamma ),$ where $E_\tau$ is the elliptic curve $\C/(\Z {+} \Z\tau)$ and $\phi_\tau\colon (\Z/n\Z)^2_\C \to E_\tau[n]$ is the unique isomorphism such that
\[
\phi_\tau\begin{pmatrix}1 \\ 0\end{pmatrix}=\frac{\tau}{n}, \quad\phi_\tau\begin{pmatrix}0 \\ 1\end{pmatrix}=\frac{1}{n}.
\]
We notice that for each $g=\begin{pmatrix}a & b \\ c & d\end{pmatrix}\in\tx{GL}_2(\Z)$ and for each $\gamma\in\tx{GL}_2(\Z/n\Z)$, the two pairs $(E_{\tau},\phi_{\tau}\circ\gamma )$ and $(E_{g(\tau)},\phi_{g(\tau)}\circ\bar{g}^{-T}\gamma )$ are equivalent because the map $z \mapsto (c\tau+d)z$ gives an isomorphism $\iota\colon E_{g(\tau)}\to E_\tau$ such that $\phi_\tau^{-1} \circ\iota \circ \phi_{g(\tau)} = \ol g^T$. Consequently, we obtain an action of $\tx{GL}_2(\Z)$ on pairs $(E_{\tau},\phi_{\tau}\circ\gamma)$ which is the same as the action of \Cref{eq_complex_points_can} and \Cref{eq_complex_points_can_comp}. We notice that the transposition is necessary to make all the maps and the actions compatible.

The isomorphism (\ref{eq_complex_points_can_comp}) is just the extension of the previous one to the compactifications. For each subgroup $H$ of $ \GLn$, we define
\[
\Gamma_{H}:= \{ \gamma \in \tx{SL}_2(\Z): \gamma^T\!\!\!\pmod n \tx{ lies in }H\}.
\]
If $\det H \neq (\Z/n\Z)^\times $, then $X_H(\C)$ is not connected: the number of connected components is $[(\Z/n\Z)^\times:\det(H)]$ and, for each connected component $X_H^{cc}(\C)$, there are isomorphisms of Riemann surfaces
\begin{align}\label{eq_complex_points_non_can}
\Gamma_{gHg^{-1}} \backslash \ol{\HH}\lto X_H^{cc}(\C), \quad \Gamma_{gHg^{-1}} \backslash \HH &\lto Y_H^{cc}(\C), \\
\Gamma_{gHg^{-1}}\tau&\mapsto (E_{\tau},\phi_{\tau}\circ g), \nonumber 
\end{align}
for some $g$ in $\GLn$. In particular, if $\det H = (\Z/n\Z)^\times$, then $Y_H$ and $X_H$ are geometrically connected curves defined over $\Q$. 

The following proposition about the isomorphism (\ref{eq_complex_points_can}) is used in Section \ref{Sec:Aut}.
\begin{prop}\label{PropRamPoint}
Let $n$ be a positive integer, let $H$ be a subgroup of $\GLn$, let $g \in \GLn$ and consider the composition 
\[
\begin{tikzcd}
\HH \arrow[bend right, rr]\arrow[r] &\Gamma_{gHg^{-1}}\backslash \HH  \arrow[hook, r] &Y_H(\C) ,
\end{tikzcd}
\]
where the left map is the natural projection and the right map is in (\ref{eq_complex_points_non_can}). Then
 a point $(E,\phi) \in Y_H(\C)$ is a branch point for such composition if and only if there is a non-trivial automorphism $u$ of $E$ such that $\phi^{-1} {\circ} u|_{E[n]} {\circ} \phi\in \pm H$. If this happens, then each point $\tau \in \HH$ projecting to $(E,\phi)$ has ramification index $\# \Aut(E)/2$.
\end{prop}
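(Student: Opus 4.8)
The plan is to reduce everything to the ramification of the quotient map $\HH\to\Gamma_{gHg^{-1}}\backslash\HH$. The right-hand map in (\ref{eq_complex_points_non_can}) is an isomorphism onto a connected component of $Y_H(\C)$, so it neither introduces nor removes ramification; a point $(E,\phi)$ is therefore a branch point of the composition, with a prescribed ramification index, exactly when the point of $\Gamma\backslash\HH$ above it is a branch point of the projection $\HH\to\Gamma\backslash\HH$, where $\Gamma:=\Gamma_{gHg^{-1}}$. For this projection the ramification index at $\tau$ is the order of the stabilizer of $\tau$ in the image $\ol\Gamma$ of $\Gamma$ in $\mathrm{PSL}_2(\Z)$, and $\tau$ is a branch point precisely when this order is $>1$. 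Since $\Gamma\backslash\HH$ only depends on $\ol\Gamma$, and $\ol\Gamma$ is unchanged if $H$ is replaced by $\pm H$, I may work with $\pm H$ from the outset; this is exactly what lets the final criterion be phrased with $\pm H$.

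The key input is the dictionary between $\mathrm{Stab}_{\SLZ}(\tau)$ and $\Aut(E_\tau)$ recorded in the discussion preceding the proposition: for $\gamma=\smt abcd\in\SLZ$ fixing $\tau$, the map $z\mapsto(c\tau+d)z$ is an automorphism $\iota_\gamma$ of $E_\tau$, and $\gamma\mapsto\iota_\gamma$ is a group isomorphism $\mathrm{Stab}_{\SLZ}(\tau)\xrightarrow{\sim}\Aut(E_\tau)$ with $\phi_\tau^{-1}\circ\iota_\gamma|_{E_\tau[n]}\circ\phi_\tau=\ol\gamma^{\,T}$. Writing $E=E_\tau$ and $\phi=\phi_\tau\circ g$ for the image point, the membership $\gamma\in\Gamma$, i.e.\ $\ol\gamma^{\,T}\in gHg^{-1}$, translates after conjugating by $g$ into $\phi^{-1}\circ\iota_\gamma|_{E[n]}\circ\phi\in H$. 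Passing to $\pm H$ and setting $B:=\{u\in\Aut(E):\phi^{-1}\circ u|_{E[n]}\circ\phi\in\pm H\}$, I would check that $\iota$ identifies the stabilizer of $\tau$ in $\ol\Gamma$ with $B/\{\pm1\}$, so that the ramification index equals $\#B/2$. Here $\pm\tx{Id}\in\pm H$ gives $\{\pm1\}\subseteq B$, and the bijection $u\mapsto-u$ between the solutions of $\phi^{-1}\circ u\circ\phi\in H$ and those of $\phi^{-1}\circ u\circ\phi\in-H$ produces the factor $2$ in both cases $-\tx{Id}\in H$ and $-\tx{Id}\notin H$.

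Finally, since $\Aut(E)$ is cyclic of order $2$, $4$ or $6$, the subgroup $B$ contains the unique order-$2$ subgroup $\{\pm1\}$, and in each of these cyclic groups the only subgroups containing $\{\pm1\}$ are $\{\pm1\}$ itself and the whole group. Hence either $B=\{\pm1\}$, so the index is $1$ and there is no branching, or $B=\Aut(E)$, so the index is $\#\Aut(E)/2$. Consequently $(E,\phi)$ is a branch point if and only if $B\supsetneq\{\pm1\}$, i.e.\ if and only if there is an automorphism $u\in\Aut(E)\setminus\{\pm1\}$ with $\phi^{-1}\circ u|_{E[n]}\circ\phi\in\pm H$; and when this holds the index is $\#\Aut(E)/2$ for every $\tau$ over $(E,\phi)$, since all such $\tau$ form a single $\Gamma$-orbit and give the same $E$ up to isomorphism. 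This is the assertion, with ``non-trivial'' read as ``not equal to $\pm\tx{Id}$'' (note that $u=\pm\tx{Id}$ always satisfies the condition, so excluding $\pm\tx{Id}$ is essential).

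The only delicate point is the $\pm$-bookkeeping in the second step: one must confirm that passing from $H$ to $\pm H$ alters neither the source curve $\Gamma\backslash\HH$ nor the ramification data, and that the index comes out as $\#\Aut(E)/2$ uniformly whether or not $-\tx{Id}\in H$. The conceptual core — that the ramification is all-or-nothing — is then just the observation about subgroups of cyclic groups of order $2,4,6$, combined with the stabilizer–automorphism dictionary already established in the text.
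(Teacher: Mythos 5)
Your proof is correct, and it takes a genuinely different route from the paper's. You argue on the uniformization side: the ramification index of $\HH\to\Gamma_{gHg^{-1}}\backslash\HH$ at $\tau$ is the order of the stabilizer of $\tau$ in the image of $\Gamma_{gHg^{-1}}$ in $\mathrm{PSL}_2(\Z)$, and the dictionary $\gamma\mapsto\iota_\gamma$ identifies this stabilizer with $B/\{\pm1\}$, where $B=\{u\in\Aut(E):\phi^{-1}\circ u|_{E[n]}\circ\phi\in\pm H\}$; the all-or-nothing alternative for subgroups of a cyclic group of order $2$, $4$ or $6$ containing $\{\pm1\}$ then gives both the branching criterion and the index $\#\Aut(E)/2$. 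The paper argues instead on the moduli side: after using \Cref{rem:pmH} to assume $-\tx{Id}\in H$ and replacing $n$ by $3n$ so that $n\ge 3$, it considers the Galois covering $\pi_H\colon Y(n)(\C)\to Y_H(\C)$ of degree $\#H/2$, computes its fiber through the moduli interpretation, getting $\#\pi_H^{-1}(E,\phi)=\#\big(H/(H\cap\Aut(E))\big)$, and reads off the ramification index as $\deg(\pi_H)/\#\pi_H^{-1}(E,\phi)$; its concluding dichotomy on $H\cap\Aut(E)$ is the same cyclic-group observation you make. Your approach stays inside classical Fuchsian-group theory, needs no level-raising trick, and works uniformly in $n$; the paper's approach never needs the stabilizer--automorphism dictionary and, because $\pi_H$ is surjective, it treats all connected components of $Y_H(\C)$ simultaneously, whereas your argument only sees the component containing the image of $\HH$ --- harmless, since branch points lie in the image by definition, and in the applications $\det H=(\Z/n\Z)^\times$ makes $Y_H$ connected.

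One point you should make explicit rather than cite: the discussion preceding the proposition records only the compatibility $\phi_\tau^{-1}\circ\iota_\gamma|_{E_\tau[n]}\circ\phi_\tau=\ol\gamma^{\,T}$, not that $\gamma\mapsto\iota_\gamma$ is a group isomorphism $\mathrm{Stab}_{\SLZ}(\tau)\to\Aut(E_\tau)$. You need the homomorphism property, which follows from the cocycle identity $j(\gamma_1\gamma_2,\tau)=j(\gamma_1,\gamma_2\tau)\,j(\gamma_2,\tau)$ when $\gamma_2\tau=\tau$, and, crucially for your ``if'' direction, surjectivity: every $u\in\Aut(E_\tau)$ is multiplication by a unit of $\End(E_\tau)$ preserving $\Z+\Z\tau$, which yields a matrix $\gamma\in\mathrm{Stab}_{\SLZ}(\tau)$ with $\iota_\gamma=u$ (its determinant is the norm of the unit, hence $1$). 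Both verifications are routine, but they are genuine inputs of your proof that the paper's fiber-counting argument never requires.
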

\begin{proof}
By Remark \ref{rem:pmH} we can suppose that $H$ contains $-\tx{Id}$. 
Instead of looking at a map $\HH \to Y_H(\C)$ parametrizing a single 
component of $Y_H$, we can work with the canonical map
\begin{equation*}\label{eq:natural_ramified_covering}
\begin{tikzcd}
\HH^\pm \times \GLn \arrow[bend right, rr]\arrow[r,"\pi"] & Y(n)(\C) \arrow[r, "\pi_H"] &Y_H(\C).
\end{tikzcd}
\end{equation*}
Up to substituting $n$ with $3n$ and $H$ with its preimage under $\GL(\Z/3n\Z) \to \GLn$, we can suppose that $n\geq 3$. This implies that $\pi$ is an (unramified) covering map, hence the ramification index of the $\pi_H \circ \pi$  in a point $(\tau,\gamma)$ is equal to the ramification index of $\pi_H$ in the point $\pi(\tau,\gamma)$. Hence, we only need to look at the ramification points of $\pi_H$.
	A point $(E,\phi)\in Y_H(\C)$ is a branch point for $\pi_H$ if and only if the fiber $\pi_H^{-1}(E,\phi)$ has cardinality smaller than $\deg \pi_H = \#  H/2$. The modular interpretation of $Y_H$ and $Y(n)$ implies that
	\begin{equation} \label{eq_fiber_Yn_to_YH}
	\pi_H^{-1}(E,\phi) = \big \{(E,u|_{E[n]} {\circ} \phi {\circ} h) :  h \in H,  u \in \Aut(E) \big\} / \Aut(E),
	\end{equation}
	where $v\in \Aut(E)$ acts sending $(E,\psi)$ to $(E,v|_{E[n]}{\circ} \psi)$. Since $n \ge 3$, the map that sends $u$ to $\phi^{-1} {\circ} u|_{E[n]} {\circ} \phi$ gives an inclusion $\Aut(E) \hookrightarrow \GLn$, hence, by (\ref{eq_fiber_Yn_to_YH}), we have
	\[ 
	\# \pi_H^{-1}(E,\phi) =  \# \Big((H{\cdot}\Aut(E) )/ \Aut(E)\Big)= \# \Big( H/(H{\cap}\Aut(E)) \Big).
	\]
	The group $\Aut(E)$ always contains the multiplication by $-1$ and is cyclic of order $2,4$ or~$6$. Finally, there are two options for $\Aut(E) {\cap} H$: 
	\begin{itemize}
		\item $\Aut(E) {\cap} H$ only contains $\pm \text{Id}$ and $(E,\phi)$ is not a branch point;
		\item $\Aut(E) {\cap} H$ has order equal to $\# \Aut(E) >2$, in this case $(E,\phi)$ is a branch point and, since the map $\pi_H$ is Galois, every point in $\pi_H^{-1}(E,\phi)$ has ramification index $\deg(\pi_H)/\# \pi_H^{-1}(E,\phi) = \# \Aut(E)/2$.
	\end{itemize} 
\end{proof}

\begin{rem}\label{rem:level1}
Notice that all the statements of \Cref{Sec:ModularCurves} and \Cref{Sec:Hecke}  are presented for a subgroup $H$ of $\GLn$, which is not defined for $n=1$. However, we can deduce all the same conclusions for the curve $X(1)$ since it is isomorphic to $X_H$ with, for example, $n=3$ and $H=\GLn$ (in this case, of course $n$ is not the level of the modular curve in the usual sense).
\end{rem}

\section{Hecke operators}\label{Sec:Hecke}

Let $n$ be a positive integer and let $H$ be a subgroup of $\GLn$. For every prime $\ell\nmid n$, there is a divisor $D_\ell\subset X_H \times X_H$ inducing the $\ell$-th Hecke operator
\[
T_\ell \colon \tx{Div}(X_H) \to \tx{Div}(X_H) , \quad T_\ell\colon \tx{Jac}(X_H) \to \tx{Jac}(X_H).
\]
On $Y_H(\C)$, it is described by
\begin{equation} \label{eq:obvious_formula_Tl}
T_\ell(E,\phi) = \sum_{{0}\lneq  C \lneq E[\ell]} (E/C,  \pi_C\circ \phi ),
\end{equation}
where $\pi_C\colon E\to E/C$ is the natural projection. Now we recall the definition of $T_\ell$. Let $H_\ell$ be the subgroup of  $\GL(\Z/n\ell\Z)$ containing the matrices whose reduction modulo $n$ lies in $H$ and whose reduction modulo $\ell$ is an upper triangular matrix. Given a $ \Z[\frac{1}{n\ell}]$-scheme $S$ and an elliptic curve $E/S$ with $H_\ell$-structure $\phi\colon (\Z/n\ell\Z)^2 \to E[n\ell]$, we have two ways of constructing an elliptic curve over $S$ with $H$-structure:
\begin{itemize}
\item The $n$-torsion subgroup of $(\Z/n\ell\Z)^2$ is canonically isomorphic, via the Chinese Remainder Theorem, to $(\Z/n\Z)^2$ and the restriction of $\phi$ to this subgroup gives an isomorphism $\phi|_{(\Z/n\Z)^2}\colon (\Z/n\Z)^2\to E[n]$. One can check that the class of $(E,\phi|_{(\Z/n\Z)^2})$ modulo $\sim_H$ does not depend on the choice of the representative $(E,\phi)$ in the equivalence class defined by $\sim_{H_\ell}$, hence
\[
\tx{pr}(E,\phi ) := (E,\phi|_{(\Z/n\Z)^2} )
\] 
is a well defined elliptic curve over $S$ with $H$-structure.
\item The subgroup  $C\subset E[\ell]$ generated by $ \phi{ \left( \begin{smallmatrix} n\\ 0 \end{smallmatrix}\right) }$ is a subgroup of $E$ of order $\ell$ and $E/C$ is an elliptic curve over $S$. Denoting by $\pi_C  \colon E \to E/C$ the natural projection, we have that
\[
\tx{qt}(E,\phi ) := (E/C,\pi_C \circ \phi|_{(\Z/n\Z)^2})
\]
is a well defined elliptic curve over $S$ with $H$-structure.
\end{itemize}
These two constructions define natural transformations between the functor of elliptic curves with $H_\ell$-structure and the functor of elliptic curves with $H$-structure restricted to schemes over $\Z[\frac1{n\ell}]$. We get induced morphisms between the coarse moduli spaces $Y_{H_\ell}$ and $(Y_H)_{\Z[\frac 1 {n\ell}]}$ that can be extended by smoothness to the compactifications:
\[
\tx{pr}, \tx{qt} \colon X_{H_\ell} \lto (X_H)_{\Z[\frac 1 {n\ell}]}.
\]
The image of $X_{H_\ell}$ under the map $(\tx{pr}, \tx{qt})$ defines a divisor inside $(X_H)_{\Z[\frac 1 {n\ell}]} {\times} (X_H)_{\Z[\frac 1 {n\ell}]}$. Since $X_H$ is smooth over $\Z[\frac 1 {n}]$, this divisor extends uniquely to $D_\ell \subset X_H {\times} X_H$ whose irreducible components project surjectively on each factor $X_H$. This correspondence induces the operator $T_\ell = \tx{qt}_* \circ \tx{pr}^* $ and the definitions of $ \tx{qt}$ and $\tx{pr}$ imply the equality (\ref{eq:obvious_formula_Tl}).

The reduction of $T_\ell$ modulo $\ell$ is described by a celebrated theorem of Eichler and Shimura.
To state this theorem in the full generality, we recall the definition of diamond operators. Let $a \in (\Z/n\Z)^\times$, then the matrix $\smt{a}{0}{0}{a}$ normalizes $H$, hence 
\[
\langle a \rangle (E, \phi) := \left(E, \phi \circ \smt{a}{0}{0}{a} \right)
\]
defines an automorphism of the functor of elliptic curves with $H$-structure. So $\langle a \rangle$ induces an automorphism of the coarse moduli space $Y_H$ and  it extends to an automorphism of the compactification $X_H$. Eichler-Shimura Relation is nowadays a common knowledge, but in the literature is often stated in a different form than we need. The proof of \cite[Theorem 8.7.2]{DS} can be directly adapted to our case, and another proof is in \cite[Theorem 7.9 and Corollary 7.10]{Shi}. We use the result in the following form.
\begin{namedtheorem}[Theorem (Eichler-Shimura Relation)]\label{EichlerShimura}
Let $n$ be a positive integer, let $H$ be a subgroup of $\GLn$, let $\ell$ be a prime number not dividing $n$, let $\ol{X}_H$ be the reduction of $X_H$ modulo $\ell$, let $\ol{T}_\ell,\ol{\langle \ell \rangle}\colon \tx{Div}(\ol{X}_H)\to \tx{Div}(\ol{X}_H)$ be the reduction of the Hecke operator $T_\ell$ and of the diamond operator $\langle \ell \rangle$ and let $\tx{Frob}_\ell \colon \ol{X}_H\to \ol{X}_H $ be the Frobenius morphism. Then
\[
\ol{T}_\ell = (\tx{Frob}_\ell)_*+ \ol{\langle\ell \rangle}_* \circ (\tx{Frob}_\ell)^* .
\]
\end{namedtheorem}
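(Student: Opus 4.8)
The plan is to reduce the Hecke correspondence $D_\ell$ modulo $\ell$ and read off the two components into which it splits, following the adaptation of \cite[Theorem 8.7.2]{DS} alluded to above. Since $\ell\nmid n$, both $X_H$ and $X_{H_\ell}$ admit smooth models over $\Z[1/n]$, so it makes sense to reduce the maps $\tx{pr},\tx{qt}\colon X_{H_\ell}\to X_H$ and the operator $T_\ell=\tx{qt}_*\circ\tx{pr}^*$ modulo $\ell$; the content of the statement is then an identity of the reduced correspondence $\ol{D}_\ell\subset\ol{X}_H\times\ol{X}_H$, equivalently of the associated divisor-valued operators.

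First I would describe the reduction $\ol{X}_{H_\ell}$. Using the Deligne--Rapoport integral moduli theory (\cite{DelRap}) together with Igusa's theorem on good reduction, $\ol{X}_{H_\ell}$ is a union of two smooth curves $Z_F$ and $Z_V$ crossing transversally exactly at the finitely many supersingular points. These are the loci where the chosen cyclic subgroup $C\subset E[\ell]$ of order $\ell$ is respectively the connected subgroup $\ker F$ (kernel of the relative Frobenius $F\colon E\to E^{(\ell)}$) and the étale subgroup $\ker V$ (kernel of the Verschiebung $V\colon E^{(\ell)}\to E$, i.e.\ the complementary subgroup on the ordinary locus). At the supersingular points one has $\ker F=\ker V$ and the two components meet, but these points are finite in number and do not affect the generic identity of correspondences.

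Then I would analyze $\tx{pr}$ and $\tx{qt}$ on each component. On $Z_F$ the map $\tx{pr}$ is an isomorphism onto $\ol{X}_H$, since $\ker F$ is canonically determined by $E$; and $\tx{qt}$ sends $(E,\phi)$ to $(E/\ker F,\,F\circ\phi)=(E^{(\ell)},\phi^{(\ell)})$, which is exactly $\tx{Frob}_\ell(E,\phi)$ because $F$ restricts to an isomorphism on $n$-torsion (as $\ell\nmid n$). Hence the contribution of $Z_F$ to $\tx{qt}_*\circ\tx{pr}^*$ is $(\tx{Frob}_\ell)_*$. On $Z_V$ the roles are reversed: $\tx{qt}$ is an isomorphism onto $\ol{X}_H$, while $\tx{pr}$ equals $\tx{Frob}_\ell$ twisted by a diamond operator. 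The diamond $\ol{\langle\ell\rangle}$ appears precisely because $V$ acts on $n$-torsion as $[\ell]\circ(F|_{E[n]})^{-1}$, by the relation $V\circ F=[\ell]$, so untwisting the level structure through $V$ multiplies it by $\ell$, which is the action of $\langle\ell\rangle$. Thus the contribution of $Z_V$ is $\ol{\langle\ell\rangle}_*\circ(\tx{Frob}_\ell)^*$, and summing the two contributions yields $\ol{T}_\ell=(\tx{Frob}_\ell)_*+\ol{\langle\ell\rangle}_*\circ(\tx{Frob}_\ell)^*$.

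The main obstacle is the first step: establishing the precise structure of the reduction of the Hecke correspondence, namely that $\ol{X}_{H_\ell}$ splits into the Frobenius and Verschiebung components \emph{each with multiplicity one}, with the correct transversal crossing at the supersingular points, and that no spurious factor of $\ell$ is introduced by the inseparability of $\tx{qt}|_{Z_F}$ or $\tx{pr}|_{Z_V}$. This is exactly where the integral moduli theory must be invoked carefully in our level-$H$ generality. Once the two components and the two projections are correctly identified, tracking the diamond twist coming from the Verschiebung is the only remaining subtlety, and the rest is bookkeeping of push-forwards and pull-backs.
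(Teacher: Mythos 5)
Your outline is essentially correct, but it is not the argument the paper relies on: the paper proves nothing here, it appeals to an adaptation of \cite[Theorem 8.7.2]{DS} (and to \cite[Theorem 7.9 and Corollary 7.10]{Shi}), and that proof works pointwise by reduction from characteristic zero rather than with the special fibre of the level-$n\ell$ curve. In the Diamond--Shurman argument one never needs any integral model of $X_{H_\ell}$: given an ordinary point of $\ol{X}_H$, one lifts it to characteristic $0$, notes that among the $\ell+1$ order-$\ell$ subgroups of the lift exactly one reduces to $\ker F$ (producing the term $(\tx{Frob}_\ell)_*$) while the remaining $\ell$ all reduce to the \'etale subgroup $\ker V$ (producing $\ell$ copies of a single point, which is $\ol{\langle\ell\rangle}_*(\tx{Frob}_\ell)^*$ of the reduction), and then one spreads the resulting identity of divisors from the dense ordinary locus to the whole correspondence --- including the cusps, where the paper actually uses the divisor-level statement in Proposition \ref{TlonCusps} --- because no component of either side is contained in a fibre of the two projections. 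Your route instead reduces the correspondence itself via the Deligne--Rapoport/Katz--Mazur description of $\ol{X}_{H_\ell}$ as $Z_F\cup Z_V$ crossing at the supersingular points; this yields the cycle identity everywhere at once and is the more geometric proof, but it consumes the heaviest available input (a semistable model with Drinfeld $\Gamma_0(\ell)$-structure, in the generality of an arbitrary $H$-structure at $n$, plus the multiplicity bookkeeping that you yourself flag as the main obstacle). Both roads lead to the stated formula, with the same identification of the Frobenius and Verschiebung contributions and the same source of the diamond twist ($V\circ F=[\ell]$); yours trades the density/spreading-out step of \cite{DS} for the integral model.

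One assertion must be repaired: since $X_{H_\ell}$ has level $n\ell$, it does \emph{not} admit a smooth model over $\Z[1/n]$ --- its reduction mod $\ell$ is exactly the non-smooth union $Z_F\cup Z_V$ that you describe immediately afterwards, so your first paragraph contradicts your second. The clean fix is the paper's own device: define the integral correspondence $D_\ell\subset X_H\times X_H$ as the closure of the generic-fibre divisor, which uses only the smoothness of $X_H$ over $\Z[1/n]$, and then identify its special fibre with the image of $Z_F\cup Z_V$ under $(\tx{pr},\tx{qt})$, each component appearing with multiplicity one as you argue; alternatively, replace ``smooth model of $X_{H_\ell}$'' by the flat Katz--Mazur model over the localization of $\Z$ at $\ell$.
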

Notice that in general $X_H$ is not geometrically connected and if $X'$ is a connected component of $\ol{X}_H$, the Frobenius morphism $\ol{X}_H \to \ol{X}_H$ may not restrict to a morphism $X'\to X'$. Analogously, if $x$ is a point on $X'$, the divisor $T_\ell(x)$ may be not supported on $X'$. We are interested in Eichler-Shimura Relation because, as already pointed out in \cite[Lemma~$2.6$]{KM}, it implies that, in certain cases, the automorphisms of modular curves automatically commute with all Hecke operators $T_\ell$.

\begin{prop}\label{uandTlprop}
Let $n$ be a positive integer, let $H<\GLn$ be a subgroup containing the scalar matrices and such that $\det H = (\Z/n\Z)^\times$. Let $\ell$ be a prime not dividing $n$ and let $\sigma \in \Gal(\ol\Q/\Q)$ be a Frobenius element at $\ell$. Then, for any automorphism $u$ of $X_H$ defined over a compositum of  quadratic fields, in $\End(\mathrm{Jac}(X_H))$ we have
\begin{equation}\label{Tlu=usigmaTl}
T_\ell \circ u= u^\sigma  \circ T_\ell ,
\end{equation}
where we identify $u$ and $u^\sigma$ with their pushforward on $\mathrm{Jac}(X_H)$.
Moreover, if the gonality of $X_H(\C)$ is greater than $2(\ell+1)$, then (\ref{Tlu=usigmaTl}) holds at level of divisors.
\end{prop}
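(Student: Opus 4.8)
The plan is to verify the identity first in characteristic $\ell$, where the Eichler--Shimura Relation expresses $T_\ell$ through Frobenius, and then to lift it back to characteristic $0$. Write $K$ for the compositum of quadratic fields over which $u$ is defined and fix a prime $\lambda$ of $K$ above $\ell$. Since $\ell\nmid n$, the curve $X_H$ and its jacobian have good reduction at $\lambda$, so $u$ reduces to an automorphism $\ol u$ of $\ol X_H:=X_H\bmod\lambda$ and the reduction map $\End(\tx{Jac}(X_H))\to\End(\tx{Jac}(\ol X_H))$ is injective; hence it suffices to prove $\ol T_\ell\circ\ol u_*=\ol{u^\sigma}_*\circ\ol T_\ell$ on $\tx{Jac}(\ol X_H)$. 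I would record at the outset the two features of the hypotheses that make this work: because $\Gal(K/\Q)$ is an elementary abelian $2$-group, the Frobenius $\sigma$ is an involution, so $u^{\sigma^2}=u$; and because $H$ contains the scalar matrices, the diamond operator $\langle\ell\rangle$ is the identity on $X_H$ (as $\smt{\ell}{0}{0}{\ell}\in H$ gives $(E,\phi\circ\smt{\ell}{0}{0}{\ell})\sim_H(E,\phi)$), so the Eichler--Shimura Relation simplifies to $\ol T_\ell=F_*+F^*$ with $F=\tx{Frob}_\ell$.

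The core computation is then short. First I would establish the twisted commutation $F\circ\ol u=\ol{u^\sigma}\circ F$ on $\ol X_H$: this is a comparison in coordinates, using that the reduction of $\sigma$ is the $\ell$-power map on the residue field of $\lambda$, so that $\ol{u^\sigma}$ is the Frobenius twist of $\ol u$. Applying the same relation to $u^\sigma$ and invoking $\sigma^2=1$ yields also $F\circ\ol{u^\sigma}=\ol u\circ F$. Passing to push-forwards and pull-backs on $\tx{Jac}(\ol X_H)$, and using $\ol u^*=(\ol u_*)^{-1}$, these two identities give $F_*\,\ol u_*=\ol{u^\sigma}_*\,F_*$ and $F^*\,\ol u_*=\ol{u^\sigma}_*\,F^*$. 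Combining with $\ol T_\ell=F_*+F^*$ gives at once
\[
\ol T_\ell\circ\ol u_*=(F_*+F^*)\,\ol u_*=\ol{u^\sigma}_*\,(F_*+F^*)=\ol{u^\sigma}_*\circ\ol T_\ell,
\]
and injectivity of reduction then promotes this to \eqref{Tlu=usigmaTl} in $\End(\tx{Jac}(X_H))$.

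For the divisor-level refinement under the gonality hypothesis, I would set $A(x):=T_\ell(u(x))$ and $B(x):=u^\sigma(T_\ell(x))$, effective divisors of degree $\ell+1$ (the number of order-$\ell$ subgroups of $E[\ell]$). The identity already proved in $\End(\tx{Jac}(X_H))$ shows that the class of $A(x)-B(x)$ does not depend on $x$, so for any two points $x,x'$ the effective divisors $A(x)+B(x')$ and $A(x')+B(x)$, of degree $2(\ell+1)$, are linearly equivalent. Were they distinct for some $x,x'$, then removing their greatest common part would yield two distinct, disjoint, linearly equivalent effective divisors, hence a nonconstant morphism $X_H\to\PP^1$ of degree at most $2(\ell+1)$, contradicting the assumption that the gonality of $X_H(\C)$ exceeds $2(\ell+1)$; therefore $A(x)+B(x')=A(x')+B(x)$ for all $x,x'$, i.e. $A(x)-B(x)=:D_0$ is one fixed divisor. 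Finally I would check $D_0=0$: writing $D_0=D_0^+-D_0^-$ with disjoint supports, effectivity of $B(x)=A(x)-D_0$ for every $x$ would force each point of $\tx{supp}(D_0^+)$ to lie in $\tx{supp}\,T_\ell(u(x))$ for all $x$, which is impossible because the Hecke fibres are finite while $u(x)$ runs over all of $X_H$; the symmetric argument applied to $A(x)=B(x)+D_0$ kills $D_0^-$. Thus $A(x)=B(x)$ for every $x$.

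The routine inputs---injectivity of reduction on endomorphism rings and the coordinate computation of the Frobenius twist---I would take as standard. The genuinely delicate point, and the reason the gonality bound carries the factor $2$, is the last paragraph: the endomorphism identity only controls $A(x)-B(x)$ up to linear equivalence and up to an $x$-independent class, so upgrading to an honest equality of effective divisors requires comparing divisors of degree $2(\ell+1)$ rather than $\ell+1$ and then separately excluding a constant divisor shift. I expect this passage, rather than the special-fibre computation, to be the main obstacle.
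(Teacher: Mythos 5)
Your proposal is correct and takes essentially the same approach as the paper: reduction modulo $\ell$, the Eichler--Shimura relation with trivial diamond operator (from $H$ containing the scalars), the fact that $\sigma$ acts as an involution on the compositum of quadratic fields, injectivity of reduction on $\End(\mathrm{Jac}(X_H))$, and then the gonality bound to upgrade the identity to divisors. The only difference is organizational: in the final step the paper kills the principal divisor $(T_\ell u-u^\sigma T_\ell)(P-Q)$ directly and then picks $Q$ with $T_\ell u(Q)$ disjoint from $T_\ell u(P)$, whereas you first prove that $T_\ell u(x)-u^\sigma T_\ell(x)$ is a constant divisor $D_0$ and then eliminate $D_0$ by the same finiteness of Hecke fibres, which is the same pair of inputs in a slightly different order.
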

\begin{proof}
Let $J:=\mathrm{Jac}(X_H)$, let $\tx{Frob}_\ell \colon \ol{X}_H\to \ol{X}_H $ be the Frobenius morphism and let $\phi_\ell$ be the Frobenius generator of $\Gal(\ol{\F}_\ell/\F_\ell)$. Let $D\in\tx{Div}(\ol{X}_H)$ and let $\bar u$ be the reduction of $u$ modulo $\ell$. Using Eichler-Shimura Relation, we have
\begin{align*}
\overline T_\ell \circ \bar u(D)=&\left((\tx{Frob}_\ell)_*+ (\tx{Frob}_\ell)^*\right)\circ \bar u(D)=
(\tx{Frob}_\ell)_*\bar u(D)+(\tx{Frob}_\ell)^*\bar u(D)= \\
=&\bar u^{\phi_\ell}(\tx{Frob}_\ell)_*(D)+\bar u^{\phi_\ell^{-1}}(\tx{Frob}_\ell)^*(D)=\ol{u^{\sigma}}(\tx{Frob}_\ell)_*(D)+\ol{u^{\sigma^{-1}}}(\tx{Frob}_\ell)^*(D).
\end{align*}
Now, since $u$ is defined over a compositum of quadratic fields, the Galois automorphisms $\sigma$ and $\sigma^{-1}$ 
act in the same way on $u$. This implies that the last term in the previous chain of equalities is equal to $\ol{u^{\sigma}}\circ \overline T_\ell(D)$ obtaining $\overline T_\ell\circ \bar u=\ol{u^{\sigma}}\circ \overline T_\ell$ in $\End(J_{\F_\ell})$.

 Since $J$ has good reduction at $\ell$, the natural map $\End(J) \to \End(J_{\F_\ell})$ is injective, hence (\ref{Tlu=usigmaTl}) holds in $\End(J)$. This means that, for any two points $P$ and $Q$ in $X_H(\mathbb C)$, the divisor 
$D:=(T_\ell u-u^\sigma T_\ell)(P-Q)$
is principal. Hence, either $D$ is the zero divisor or is the divisor of a non-constant rational function on $X_H$ of degree at most $2(\ell+1)$.

Now we suppose that the gonality of $X_H$ exceeds $2(\ell+1)$. In this case, there are no non-constant rational functions on $X_H$ of degree at most $2(\ell+1)$, hence $D$ is the zero divisor. This gives the following equality of divisors:
$$T_\ell u(P)+u^\sigma T_\ell (Q)=u^\sigma T_\ell (P) +T_\ell u(Q).$$
For every point $P$, we can choose $Q$ such that the supports of $T_\ell u(P) $ and $T_\ell u (Q)$ are disjoint, and, therefore, last equality implies 
$T_\ell u(P)= u^\sigma T_\ell(P)$ as divisors. Up to a base change to $\C$, each divisor on $X_H$ is a sum of points with integer coefficients, hence we conclude that (\ref{Tlu=usigmaTl}) holds at level of divisors.
\end{proof}
 
\subsection*{Multiple points in the image of Hecke operators}
In the proofs of Section \ref{Sec:Aut} we look at points $P \in X_H(\C)$ and primes $\ell$ such that $T_\ell(P)$ is not a sum of distinct points. In this subsection we study this phenomenon. We denote by $\rho=e^{\frac{2\pi i}{3}}$ the primitive third root of unity contained in $\HH$. Moreover, for every $\tau\in\HH$, we denote by $E_\tau$ the elliptic curve $\C/(\Z {+} \Z\tau)$. The main result is the following 

\begin{thm}\label{theo:repetitions}
	Let $n$ be a positive integer, let $H$ be a subgroup of $\GLn$ and let $\ell \ge 5$ be a prime not dividing $n$. Then, for all points $P \in X_H(\C)$, we have that:
	\begin{enumerate}
		\item in $T_\ell(P)$ there is a point with multiplicity at least $4$ if and only if $P$ is a cusp;
		\item in $T_\ell(P)$ there is a point with multiplicity $3$ if and only if $P=(E_\rho,\phi)$ for some $\phi$ such that and the matrix $\phi^{-1}{\circ}\rho|_{E_{\rho}[n]}{\circ}\phi$ lies in $\pm H$;
		\item in $T_\ell(P)$ there are two distinct points with multiplicity $2$ if and only if $P=(E_i, \phi)$  for some $\phi$ such that the matrix $\phi^{-1}{\circ}i|_{E_i[n]}{\circ}\phi$ lies in $\pm H$.
	\end{enumerate}
\end{thm}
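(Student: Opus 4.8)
The plan is to read everything off the moduli formula \eqref{eq:obvious_formula_Tl}: for $P=(E,\phi)$, the multiplicity of a point $Q\in X_H(\C)$ in $T_\ell(P)$ is the number of the $\ell+1$ order-$\ell$ subgroups $C\subset E[\ell]$ for which $(E/C,\pi_C\circ\phi)\sim_H Q$. The first step is to translate a coincidence $(E/C,\pi_C\circ\phi)\sim_H(E/C',\pi_{C'}\circ\phi)$ into endomorphism data: an isomorphism $\iota\colon E/C\to E/C'$ realizing it yields $\beta:=\widehat{\pi_{C'}}\circ\iota\circ\pi_C\in\End(E)$ of degree $\ell^2$, so $\xi:=\beta/\ell\in\End(E)\otimes\Q$ has norm $1$ and acts on $E[n]$ (recall $\ell\nmid n$). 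Chasing the definitions, and using $\pi_{C'}^{-1}=\ell^{-1}\widehat{\pi_{C'}}$ on $n$-torsion, the coincidence becomes exactly $\phi^{-1}\circ\xi|_{E[n]}\circ\phi\in\pm H$. When $\End(E)=\Z$ the only norm-one elements are $\pm1$, forcing $C=C'$; hence a point $P$ with repetitions in $T_\ell(P)$ must be a cusp or satisfy $j(E)\in\{0,1728\}$.

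I would then organize the count through the correspondence $T_\ell=\tx{qt}_*\circ\tx{pr}^*$, with $\tx{pr},\tx{qt}\colon X_{H_\ell}\to X_H$ of degree $\ell+1$, so that the multiplicity of $Q$ equals $\sum e_{\tx{pr}}(\widetilde P)$ over $\widetilde P\in\tx{pr}^{-1}(P)$ with $\tx{qt}(\widetilde P)=Q$; the ramification of $\tx{pr}$ over an elliptic point is computed by applying \Cref{PropRamPoint} to both $X_H$ and $X_{H_\ell}$. For $j(E)\in\{0,1728\}$ the group $\Aut(E)/\{\pm1\}$ acts on $\PP^1(\F_\ell)$, cyclically of order $3$ when $E\cong E_\rho$ and of order $2$ when $E\cong E_i$, and since $\ell\ge5$ this action is nontrivial. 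A subgroup $C$ lies on a ramified orbit precisely when the automorphism stabilizes $C$, and the level condition $\phi^{-1}\circ\rho|_{E_\rho[n]}\circ\phi\in\pm H$ (resp. $\phi^{-1}\circ i|_{E_i[n]}\circ\phi\in\pm H$) is, by \Cref{PropRamPoint}, exactly the statement that $P$ is a branch point of $\HH\to X_H$; it makes each free orbit of size $3$ (resp. $2$) collapse under $\tx{qt}$ to a point of multiplicity $3$ (resp. $2$). Counting orbits of the order-$3$ (resp. order-$2$) element on $\PP^1(\F_\ell)$ for $\ell\ge5$ gives at least one size-$3$ orbit in the $E_\rho$ case and at least two size-$2$ orbits in the $E_i$ case, producing the required multiplicity-$3$ point and pair of multiplicity-$2$ points.

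For the converses I would show that no non-cuspidal point reaches multiplicity $4$ and that the two patterns each force the relevant automorphism. The bound $e_{\tx{pr}}(\widetilde P)\le\#\Aut(E)/2\le3$ handles one preimage, so the work is to bound how many subgroups $\tx{qt}$ can glue to a single $Q$: returning to the norm-one elements $\xi$ and using that units of an imaginary quadratic order are roots of unity while an element of norm $\ell^2$ not divisible by $\ell$ is tightly constrained, one sees that outside $j\in\{0,1728\}$ at most two subgroups can collapse (the two principal factorizations of $\ell$ under complex multiplication), never three, and that such isolated coincidences yield at most one multiplicity-$2$ point. Two distinct size-$2$ orbits cannot be glued without producing multiplicity $4$, so for $E_i$ their images stay distinct. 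I expect this endomorphism/norm-form bookkeeping — separating automorphism-driven collapses from stray complex-multiplication ones and controlling the extra identifications made by $\tx{qt}$ — to be the main obstacle.

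Finally, for a cusp $\Aut(E)$ is replaced by the unipotent stabilizer, and I would compute $T_\ell$ from the representatives $\smt{1}{j}{0}{\ell}$ ($0\le j<\ell$) and $\smt{\ell}{0}{0}{1}$, equivalently on the degenerate curve splitting the order-$\ell$ subgroups into the connected ($\mu_\ell$) one and $\ell$ étale ones. The $\ell$ étale quotients differ only in their mod-$\ell$ structure, which the level-$n$ datum cannot distinguish, so they define a single cusp of $X_H$ of multiplicity $\ell\ge5>4$; combined with the previous paragraph (non-cuspidal multiplicities are $\le3$) this yields part (1). The hypothesis $\ell\ge5$ is used throughout: it makes the order-$3$ and order-$2$ actions on $\PP^1(\F_\ell)$ genuine and forces the cuspidal collapse past $4$.
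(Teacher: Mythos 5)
Your architecture tracks the paper's quite closely: your norm-one reformulation of a coincidence $(E/C,\pi_C\circ\phi)\sim_H(E/C',\pi_{C'}\circ\phi)$ is equivalent to the paper's \Cref{critdblpts}, your orbit analysis on $\PP^1(\F_\ell)$ combined with the matrix condition is the forward direction of \Cref{prop:Erhoi_Tl}, and your cusp computation (Tate curve/coset representatives, producing a single cusp of multiplicity $\ell\ge 5$) is a legitimate, more classical alternative to \Cref{TlonCusps}, which the paper instead deduces from Eichler--Shimura together with the injectivity of reduction on cusps. However, two things go wrong. The concrete error is the last sentence of your first paragraph: a repetition in $T_\ell(P)$ does \emph{not} force $P$ to be a cusp or to satisfy $j(E)\in\{0,1728\}$; it only forces $E$ to have CM. If $\End(E)=\calO$ is any order in which $\ell$ splits, say $\ell\calO=\mathfrak l\bar{\mathfrak l}$ with $\mathfrak l^{2}$ principal, then the pair $\{E[\mathfrak l],E[\bar{\mathfrak l}]\}$ can collapse and give a single point of multiplicity $2$ (take $H=\GLn$, where the matrix condition is vacuous: this is the classical repetition of $T_\ell$ at CM points of the $j$-line); such $E$ has arbitrary CM $j$-invariant. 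This is exactly why the theorem is phrased in terms of multiplicity $3$ and of \emph{two distinct} points of multiplicity $2$, and your own third paragraph (at most one multiplicity-$2$ point outside $j\in\{0,1728\}$) contradicts this sentence, so it must be deleted, not merely refined.

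The substantive gap is that the converse implications --- the actual content of the theorem, and the bulk of the paper's proof (\Cref{prop:rhoi_belli} and the ``only if'' halves of \Cref{prop:Erhoi_Tl}) --- are precisely what you defer as ``the main obstacle''. Missing are: (a) the cap that non-cuspidal points have multiplicity at most $3$, and at most $2$ when $E\cong E_i$ or $j\notin\{0,1728\}$; (b) that a multiplicity-$3$ point forces the three collapsing subgroups to be $C,\rho C,\rho^2 C$ with $E\cong E_\rho$, whence $\phi^{-1}\circ\rho|_{E_\rho[n]}\circ\phi\in\pm H$; (c) that two distinct multiplicity-$2$ points cannot occur for $E\cong E_\rho$ nor for other CM curves, only for $E\cong E_i$ via collapsed free $i$-orbits. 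The paper obtains all of this from Condition (\ref{critdblptsbetai}) of \Cref{critdblpts}, whose key point is that the relevant norm-$\ell^2$ endomorphisms live in the endomorphism ring of the \emph{target} curve, which for a non-stable $C$ is the conductor-$\ell$ order $\Z[\ell\rho]$ (resp.\ $\Z[\ell i]$) containing only three (resp.\ two) norm-$\ell^2$ elements up to sign; this caps the multiplicity and pins down the collapsing subgroups simultaneously. Your source-side bookkeeping could be completed to do the same --- every collapse-inducing $\beta\in\End(E)$ of norm $\ell^2$ is either $u\ell$ with $u\in\Aut(E)$, linking $C$ to $u(C)$ inside one $\Aut(E)$-orbit, or a generator of $\mathfrak l^2$ or $\bar{\mathfrak l}^2$, linking only the two fixed subgroups; and distinct orbits cannot glue because their quotients have different endomorphism rings --- but none of this is in your text, and the one argument you do give (``two distinct size-$2$ orbits cannot be glued without producing multiplicity $4$'') is circular, because the bound ``multiplicity $\le 3$ at non-cusps'' is not yet available at the point where you invoke it. As written, the proposal establishes the forward and cuspidal directions and leaves the converses unproven.
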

\begin{proof}
	This immediately follows from the following Propositions \ref{TlonCusps},  \ref{prop:rhoi_belli} and \ref{prop:Erhoi_Tl}.	
\end{proof}

 When $P$ is a cusp, we have the following result.
\begin{prop}\label{TlonCusps}
	Let $n$ be a positive integer and let $H$ be a subgroup of $\GLn$. Let $\ell$ be a prime number not dividing $n$, let $\sigma\in\mathrm{Gal}(\overline{\Q}/\Q)$ be a Frobenius element at $\ell$ and let $C\in X_H(\overline{\Q})$ be a cusp. Then, at the level of divisors we have $$T_\ell (C)=C^\sigma+\ell\,\langle \ell \rangle (C^{\sigma^{-1}}).$$
\end{prop}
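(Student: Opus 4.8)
The plan is to compute $T_\ell(C)$ directly from the modular description \eqref{eq:obvious_formula_Tl}, realizing the cusp as a degenerate (Tate-curve) level structure, and then to match the two types of $\ell$-isogenies that appear against the Galois action of $\sigma$ and the diamond operator $\langle\ell\rangle$. First I would fix coordinates. Writing $q=e^{2\pi i\tau}$, the curve $E_\tau$ is identified with the Tate curve $\mathbb G_m/q^{\Z}$, and under the normalization of $\phi_\tau$ from Section \ref{Sec:ModularCurves} the standard basis $e_1,e_2$ of $(\Z/n\Z)^2$ satisfies $\phi_\tau(e_1)\leftrightarrow q^{1/n}$ (the ``vertical'' component-group direction) and $\phi_\tau(e_2)\leftrightarrow \zeta_n:=e^{2\pi i/n}\in\mu_n$ (the ``horizontal'' $\mu_n$-direction). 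Thus a cusp $C$ is represented by $(E,\phi)$ with $E=\text{Tate}(q)$ in the limit $q\to 0$, and by \eqref{eq:obvious_formula_Tl} I must enumerate the $\ell+1$ order-$\ell$ subgroups of $E[\ell]=\langle\zeta_\ell\rangle\oplus\langle q^{1/\ell}\rangle$, namely $\mu_\ell=\langle\zeta_\ell\rangle$ and the $\ell$ subgroups $\langle q^{1/\ell}\zeta_\ell^{\,j}\rangle$ for $j=0,\dots,\ell-1$.

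Next I would compute the two resulting images. The quotient by $\mu_\ell$ is the map $\mathbb G_m/q^\Z\to\mathbb G_m/q^{\ell\Z}=\text{Tate}(q^\ell)$, $u\mapsto u^\ell$; it sends $q^{1/n}\mapsto (q^\ell)^{1/n}$ and $\zeta_n\mapsto\zeta_n^\ell$, so the induced level structure on $\text{Tate}(q^\ell)$ is $\phi\circ\smt{1}{0}{0}{\ell}$. The quotient by $\langle q^{1/\ell}\zeta_\ell^{\,j}\rangle$ is the identity on $\mathbb G_m$ with new Tate parameter $\tilde q=q^{1/\ell}\zeta_\ell^{\,j}$ (so $\tilde q^{\,\ell}=q$); it sends $\zeta_n\mapsto\zeta_n$ and $q^{1/n}=\tilde q^{\,\ell/n}=\ell\cdot\tilde q^{\,1/n}$, giving level structure $\phi\circ\smt{\ell}{0}{0}{1}$ on $\text{Tate}(\tilde q)$. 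Passing to the boundary $q\to 0$, the $\mu_\ell$-quotient contributes a single cusp $C_1$ with level structure $\phi\circ\smt{1}{0}{0}{\ell}$, while the $\ell$ parameters $q^{1/\ell}\zeta_\ell^{\,j}$ are permuted by the local monodromy $q^{1/\ell}\mapsto\zeta_\ell q^{1/\ell}$ and hence limit to one and the same cusp $C_2$ with level structure $\phi\circ\smt{\ell}{0}{0}{1}$, appearing with multiplicity $\ell$. This already gives $T_\ell(C)=C_1+\ell\,C_2$.

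It then remains to identify $C_1$ and $C_2$ with $C^\sigma$ and $\langle\ell\rangle C^{\sigma^{-1}}$. The Frobenius $\sigma$ fixes the nodal base of the cusp and acts on $\mu_n$ by the cyclotomic character $\zeta_n\mapsto\zeta_n^{\ell}$, while fixing the component-group direction $q^{1/n}$; hence on level structures $\phi^{\sigma}=\phi\circ\smt{1}{0}{0}{\ell}$, so $C_1=C^\sigma$. For the second term I would factor $\smt{\ell}{0}{0}{1}=\smt{\ell}{0}{0}{\ell}\,\smt{1}{0}{0}{\ell^{-1}}$: since $\phi\circ\smt{1}{0}{0}{\ell^{-1}}=\phi^{\sigma^{-1}}$ and precomposition with the scalar $\smt{\ell}{0}{0}{\ell}$ realizes $\langle\ell\rangle$, we get $C_2=\langle\ell\rangle C^{\sigma^{-1}}$, and therefore $T_\ell(C)=C^\sigma+\ell\,\langle\ell\rangle(C^{\sigma^{-1}})$.

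The step I expect to be the main obstacle is the rigorous justification of the Galois action on the cusp, i.e. that $\sigma$ multiplies the $\mu_n$-part of the level structure by $\ell$ and fixes the component-group part. This is the genuine arithmetic input and is cleanest to phrase via the Tate curve over a cyclotomic base (cusps as horizontal sections in the style of Katz--Mazur), after which one must also carry the $H$-equivalence relation through all the constructions and keep track of the $\bar g^{-T}$ transpose convention of Section \ref{Sec:ModularCurves} so that the matrices act on the correct side; the claimed collapse of the $\ell$ lower isogenies into a single cusp of multiplicity $\ell$ is a second, more routine point to be checked. As a sanity cross-check (and an alternative route), the Eichler--Shimura Relation yields the reduction of this identity modulo $\ell$ via $\overline{T}_\ell=(\tx{Frob}_\ell)_*+\ol{\langle\ell\rangle}_*\circ(\tx{Frob}_\ell)^*$, with $(\tx{Frob}_\ell)_*\bar C=\overline{C^\sigma}$ and $(\tx{Frob}_\ell)^*\bar C=\ell\cdot\overline{C^{\sigma^{-1}}}$; since for $\ell\nmid n$ distinct cusps have distinct reductions and $T_\ell(C)$ is supported on cusps, the characteristic-zero identity lifts the reduced one.
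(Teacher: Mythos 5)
Your proposal is correct, and it in fact contains two proofs. Your concluding ``sanity cross-check'' paragraph is in substance the paper's own proof: the paper notes that $T_\ell(C)=\tx{qt}_*\tx{pr}^*(C)$ is supported on cusps because $\tx{pr}$ and $\tx{qt}$ send cusps to cusps, cites \cite[IV.3.4]{DelRap} for the fact that distinct cusps of $X_H(\ol\Q)$ reduce to distinct points modulo a prime $\mathfrak{l}\mid\ell$, and then reads off the identity modulo $\mathfrak{l}$ from the Eichler--Shimura Relation, exactly as in your last paragraph. Your primary route --- the explicit Tate-curve computation --- is genuinely different and, in outline, also correct: the enumeration of the $\ell+1$ subgroups, the matrices $\smt{1}{0}{0}{\ell}$ and $\smt{\ell}{0}{0}{1}$ for the two kinds of quotients, the collapsing of the $\ell$ lower subgroups into a single cusp of multiplicity $\ell$ via monodromy (equivalently, $\tx{pr}^*(C)=D_1+\ell D_2$ with $\tx{pr}$ ramified of index $\ell$ at $D_2$), and the scalar-matrix factorization identifying the second term with $\langle\ell\rangle C^{\sigma^{-1}}$ all check out; moreover, since scalars are central and all the operations act by \emph{left} multiplication on the matrix $\gamma$ of a general cusp $(\tx{Tate}(q),\phi_{\tx{std}}\circ\gamma)$, the argument is not special to the cusp at infinity (your formula $\phi^\sigma=\phi\circ\smt{1}{0}{0}{\ell}$ should read $\phi_{\tx{std}}\circ\smt{1}{0}{0}{\ell}\circ\gamma$ in general, but the identifications $C_1=C^\sigma$ and $C_2=\langle\ell\rangle C^{\sigma^{-1}}$ are unaffected). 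The step you flag as the main obstacle is indeed the only real gap in that route: justifying that a Frobenius at $\ell$ acts on cuspidal level structures by $\zeta_n\mapsto\zeta_n^\ell$ while fixing $q^{1/n}$ up to monodromy requires the Deligne--Rapoport/Katz--Mazur theory of the cuspidal locus (the Tate curve over $\Z[\zeta_n]((q^{1/n}))$, or N\'eron polygons). That is precisely what the paper's argument is engineered to avoid: by working modulo $\mathfrak{l}$, all Galois-theoretic content is delegated to the Eichler--Shimura Relation, and the only arithmetic fact about cusps needed is the quotable injectivity of their reduction. So your direct route buys an explicit, characteristic-zero description of the support of $T_\ell(C)$ at the cost of heavier input on the arithmetic of cusps, while the paper's route (your fallback) is shorter and needs far less.
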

\begin{proof}
	The divisor $T_\ell(C) = \tx{qt}_* \tx{pr}^*(C) $ is supported on the cusps because the maps $\tx{pr}, \tx{qt}\colon X_{H_\ell} \to X_H $ send non-cuspidal points to non-cuspidal points and cusps to cusps. If we fix a prime ideal $\mathfrak{l} $  in the  algebraic integers such that $\mathfrak{l} \mid \ell$, then, by \cite[IV.3.4]{DelRap}, each cusp in $X_H(\ol \Q)$ reduces to a different point modulo $\mathfrak l$. Thus, it is enough to prove that $T_\ell (C)$ is congruent to $C^\sigma+\ell\, \langle \ell \rangle(C^{\sigma^{-1}})$ modulo $\mathfrak l$, and this is true by Eichler-Shimura Relation.
\end{proof}

We need a criterion to characterize the points $(E,\phi) \in Y_H(\C)$ such that their image via $T_\ell$ contains a point with multiplicity at least $2$. It is given by the following lemma.
\begin{lem}\label{critdblpts} 
Let $n$ be a positive integer, let $H$ be a subgroup of $\GLn$ and let $\ell$ be a prime not dividing $n$. For all points $(E,\phi), (E',\phi') \in Y_H(\C)$ and all positive integers $m\ge 2$, the following are equivalent:
	\begin{enumerate}
		\item\label{critdblptsdef} $T_\ell(E,\phi)$ contains $(E',\phi')$ with multiplicity $m$;
		\item\label{critdblptsalphai} there are $m$ isogenies $\alpha_1, \ldots, \alpha_m \colon E \to E'$ of degree $\ell$ with distinct kernels such that $(\phi')^{-1}{\circ}\alpha_j|_{E[n]}{\circ}\phi$ lies in $\pm H$, for every $j=1,\ldots,m$;
		\item\label{critdblptsbetai} there are $m$ endomorphisms $\beta_1=\ell, \beta_2,\ldots, \beta_m$ of $E'$ of degree $\ell^2$ and an isogeny $\alpha \colon E' \to E$ of degree $\ell$ such that:
	\end{enumerate}
	\begin{description}
		\item[P1]\label{critdblptsbetau} $\beta_i \neq u \circ \beta_j$, for $i,j=1,\ldots,m$, such that $i\neq j$ and for each $u \in \Aut(E')$;
		\item[P2]\label{critdblptsker} $\ker \alpha \subset \ker \beta_j$, for every $j=1,\ldots,m$;
		\item[P3]\label{critdblptsH} the matrices $\ell^{-1}\phi^{-1} {\circ} \alpha|_{E'[n]} {\circ} \phi'$ and $\ell^{-1} (\phi')^{-1} {\circ} \beta_j|_{E'[n]} {\circ} \phi'$ lie in $\pm H$, for every $j=1,\ldots,m$, where $\ell^{-1}$ is the inverse of the scalar matrix $\ell \bmod n$.
	\end{description}
\end{lem}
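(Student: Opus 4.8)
The plan is to establish the two equivalences $(1)\Leftrightarrow(2)$ and $(2)\Leftrightarrow(3)$ separately: the first by unwinding the definition of $T_\ell$, the second by passing to dual isogenies. For $(1)\Leftrightarrow(2)$ I would start from the explicit formula (\ref{eq:obvious_formula_Tl}), in which $C$ ranges over the $\ell+1$ order-$\ell$ subgroups of $E[\ell]$. By definition, the multiplicity of $(E',\phi')$ in $T_\ell(E,\phi)$ is the number of such $C$ for which $(E/C,\pi_C\circ\phi)\sim_H(E',\phi')$. By Remark~\ref{rem:pmH}, this relation means there is an isomorphism $\iota\colon E/C\to E'$ with $(\phi')^{-1}\circ\iota|_{(E/C)[n]}\circ\pi_C\circ\phi\in\pm H$; setting $\alpha:=\iota\circ\pi_C$, this is exactly a degree-$\ell$ isogeny $E\to E'$ with $\ker\alpha=C$ satisfying the condition in (2). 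Conversely, every degree-$\ell$ isogeny $\alpha\colon E\to E'$ factors as $\iota\circ\pi_{\ker\alpha}$. Hence the contributing subgroups $C$ correspond bijectively to the kernels of valid isogenies, so the multiplicity equals the number of valid degree-$\ell$ isogenies with pairwise distinct kernels, which is precisely~(2).

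For $(2)\Rightarrow(3)$, after relabelling so that $\alpha_1$ is distinguished, I would set $\alpha:=\hat{\alpha}_1\colon E'\to E$ and $\beta_j:=\alpha_j\circ\hat{\alpha}_1\in\End(E')$. Then $\deg\beta_j=\ell^2$ and $\beta_1=\alpha_1\circ\hat{\alpha}_1=[\ell]$. Property P2 is immediate, since $\ker\hat{\alpha}_1\subset\hat{\alpha}_1^{-1}(\ker\alpha_j)=\ker\beta_j$. For P1, if $\beta_i=u\circ\beta_j$, i.e. $\alpha_i\circ\hat{\alpha}_1=u\circ\alpha_j\circ\hat{\alpha}_1$, then right-cancelling the surjection $\hat{\alpha}_1$ gives $\alpha_i=u\circ\alpha_j$, forcing $\ker\alpha_i=\ker\alpha_j$, against the distinctness in (2). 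For P3 I would work with the matrices $A_j:=(\phi')^{-1}\circ\alpha_j|_{E[n]}\circ\phi\in\pm H$: since $\ell$ is invertible modulo $n$, the relation $\hat{\alpha}_1\circ\alpha_1=[\ell]$ shows that the matrix of $\hat{\alpha}_1$ on $n$-torsion is $\ell A_1^{-1}$, whence $\ell^{-1}\phi^{-1}\circ\alpha|_{E'[n]}\circ\phi'=A_1^{-1}\in\pm H$ and $\ell^{-1}(\phi')^{-1}\circ\beta_j|_{E'[n]}\circ\phi'=A_jA_1^{-1}\in\pm H$, using that $\pm H$ is a group.

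For the converse $(3)\Rightarrow(2)$, Property P2 lets me factor each $\beta_j$ as $\beta_j=\alpha_j\circ\alpha$ with $\alpha_j\colon E\to E'$ of degree $\ell^2/\ell=\ell$; the case $j=1$ recovers $\alpha_1=\hat{\alpha}$. Running the same matrix computation backwards turns the conditions of P3 into $A_j\in\pm H$, so each $\alpha_j$ satisfies the requirement of (2). Finally, if two of the $\alpha_j$ shared a kernel they would differ by an automorphism $u\in\Aut(E')$, whence $\beta_i=u\circ\beta_j$, contradicting P1; this produces the $m$ isogenies with pairwise distinct kernels demanded by (2).

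The main obstacle I anticipate is the careful bookkeeping in P3: one must track composition orders, check that $\alpha_j|_{E[n]}$ and $\hat{\alpha}_1|_{E'[n]}$ are isomorphisms on $n$-torsion (which holds because $\gcd(n,\ell)=1$), and use that $\det A_j=\deg\alpha_j=\ell$ is a unit modulo $n$ in order to invert $A_1$. Combined with the fact that $\pm H$ is closed under products and inverses, this is exactly what makes the matrix conditions in (2) and (3) match up. Everything else reduces to the standard dictionary between order-$\ell$ subgroups, degree-$\ell$ isogenies, and their duals.
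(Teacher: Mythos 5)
Your proposal is correct and follows essentially the same route as the paper's proof: the equivalence of (1) and (2) by unwinding the definition of $T_\ell$ and the relation $\sim_H$, and the equivalence of (2) and (3) by setting $\alpha=\hat\alpha_1$, $\beta_j=\alpha_j\circ\hat\alpha_1$ in one direction and factoring $\beta_j=\alpha_j\circ\alpha$ via \textbf{P2} in the other. The only difference is that you spell out the verifications of \textbf{P1}--\textbf{P3} (right-cancellation of $\hat\alpha_1$, the matrix identities, and invertibility modulo $n$) which the paper leaves implicit.
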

\begin{proof}
The equivalence between (\ref{critdblptsdef}) and (\ref{critdblptsalphai}) follows by definition of Hecke operator. Now we prove the equivalence between (\ref{critdblptsalphai}) and (\ref{critdblptsbetai}). Let $\alpha_1, \ldots, \alpha_m$ be isogenies of degree $\ell$ with distinct  kernels, then it is enough to take $\alpha$ equal to the dual of $\alpha_1$ and $\beta_j= \alpha_j {\circ} \alpha$, for $j=1,\ldots,m$. Conversely, if $\beta_1, \ldots, \beta_m$ respect the three properties above, then, for every $j=1,\ldots,m$, we can take $\alpha_j$ to be the unique isogeny such that $\beta_j = \alpha_j{\circ}\alpha$.
\end{proof}

In the next two propositions we study some cases in which $T_\ell(E,\phi)$ contains points with higher multiplicity, with a particular attention to $E_i$ and $E_\rho$. Namely, \Cref{prop:Erhoi_Tl} characterizes when $\phi^{-1}{\circ}\tau|_{E_{\tau}[n]}{\circ}\phi$ belongs to $\pm H$, for $\tau =\rho,i$, in terms of the multiplicities shown in the divisor $T_\ell(E_\tau,\phi)$, while \Cref{prop:rhoi_belli} proves that if $T_\ell(E,\phi)$ shows certain multiplicities, then $E$ has complex multiplication by $\Q(i)$ or $\Q(\rho)$.

\begin{prop}\label{prop:rhoi_belli}
	Let $n$ be a positive integer, let $H$ be a subgroup of $\GLn$, let $\ell$ be a prime not dividing $n$ and let $(E, \phi)$ be a $\C$-point of $Y_H$. Then:
\begin{enumerate}
\item\label{prop:mult3} the points in the image $T_\ell(E,\phi)$ have multiplicity at most $3$;
\item\label{prop:rho_bello} if $T_\ell(E,\phi)$ contains a point with multiplicity $3$, then $E \cong E_{\rho}$;
\item\label{prop:i_bello} if there are two distinct points of $Y_H(\C)$ appearing with multiplicity $2$ in $T_\ell(E,\phi)$, then $E \cong E_i$.
\end{enumerate}
\end{prop}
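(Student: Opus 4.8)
The plan is to reduce each statement to counting endomorphisms of degree $\ell^2$ via the criterion of \Cref{critdblpts}, and then to read off the answer from the complex-multiplication structure of the curves involved. Suppose a point $(E',\phi')$ occurs in $T_\ell(E,\phi)$ with multiplicity $m\ge2$. By \Cref{critdblpts} there are endomorphisms $\beta_1=\ell,\beta_2,\dots,\beta_m\in\End(E')$ of degree $\ell^2$, pairwise inequivalent modulo $\Aut(E')$ (property P1), all killing a fixed line $D:=\ker\alpha\subset E'[\ell]$ of order $\ell$ (property P2). Since $\beta_2$ is not a unit multiple of $\ell$, it is not a rational integer, so $\End(E')\otimes\Q$ is an imaginary quadratic field $K$; thus $E'$, and hence the $\ell$-isogenous curve $E$, has CM by an order $\calO$ in $K$, with ring of integers $\calO_K$. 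Writing $\bar\beta_j$ for the reduction acting on $E'[\ell]\cong\F_\ell^2$, we have $\det\bar\beta_j=\deg\beta_j=\ell^2\equiv0$, so $\bar\beta_j$ has rank $\le1$; for $j\ge2$ it has rank exactly $1$ (otherwise $\beta_j\in\ell\calO$, forcing $\beta_j=\ell u$), and then P2 forces $\ker\bar\beta_j=D$. So every nontrivial $\beta_j$ reduces to a rank-one endomorphism with the same kernel $D$.

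Next I would bound $m$ by analysing the $\F_\ell$-algebra $\calO/\ell\calO$, which is $\F_{\ell^2}$, $\F_\ell\times\F_\ell$ or $\F_\ell[\epsilon]/(\epsilon^2)$ according as $\ell$ is inert, split, or ramified/dividing the conductor of $\calO$. In the inert case there are no rank-one elements, so $m=1$. In the split case the only rank-one kernels are the two eigenlines, and the ideals of norm $\ell^2$ contained in the corresponding prime $\mathfrak p$ are $(\ell)$ and $\mathfrak p^2$, giving $m\le2$. In the remaining case $\calO/\ell\calO\cong\F_\ell[\epsilon]$ there is a unique rank-one kernel line $L$, so we must have $D=L$; here the key computational point is that any $\beta\in\calO$ with $N(\beta)=\ell^2$ lies in $\ell\calO_K$ (reducing the norm form modulo $\ell$ shows the rational part of $\beta$ is divisible by $\ell$), whence $\beta\in\ell\,\calO_K^\times$ and the number of classes modulo $\Aut(E')=\{\pm1\}$ is $\#\calO_K^\times/2\in\{1,2,3\}$. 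In every case $m\le3$, which is part (1); moreover $m=3$ forces $\#\calO_K^\times=6$, i.e.\ $K=\Q(\sqrt{-3})$ (ramification would need $\ell=3<5$), and an $m=2$ of this last type forces $K=\Q(i)$.

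For part (2) it then remains to identify $E$. In the triple case the isogeny $\alpha\colon E'\to E$ has kernel $D=L$, the unique ascending direction in the $\ell$-isogeny volcano for $K=\Q(\sqrt{-3})$; hence $E$ carries the maximal order $\Z[\rho]$, so $j(E)=0$ and $E\cong E_\rho$. For part (3) I would argue with the source curve and its automorphisms: the multiplicities of $T_\ell(E,\phi)$ are governed by the action of $\Aut(E)/\{\pm1\}$ on the $\ell+1$ order-$\ell$ subgroups of $E[\ell]$ together with the cycle structure of the volcano, two subgroups giving the same point of $Y_H$ exactly when they lie in one $\Aut(E)$-orbit (via some $v\in\Aut(E)$ with $\phi^{-1}\circ v|_{E[n]}\circ\phi\in\pm H$, as in \Cref{PropRamPoint}) or when they are the two conjugate horizontal isogenies of a crater vertex. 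Checking the volcano case by case shows that a curve with $\Aut(E)=\{\pm1\}$ produces at most one double point (the unique horizontal collision, occurring when $\mathfrak p^2$ is principal), and that $E_\rho$ produces only triples together with at most one double (the collision of its two fixed horizontal subgroups). Hence the presence of \emph{two} distinct double points forces $\Aut(E)$ to contain an element of order $4$, i.e.\ $E\cong E_i$.

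The main obstacle is the bookkeeping in the non-maximal case where $\ell$ divides the conductor of $\calO$: one must pin down the norm-$\ell^2$ elements exactly and relate the reduction line $L$ to the ascending isogeny, and in part (3) one must show that two double points cannot both arise from class-group coincidences and therefore must come from a genuine order-$4$ automorphism of $E$, while excluding the order-$6$ case, which by part (2) would manifest as a triple rather than as a second double.
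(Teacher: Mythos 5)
Your proposal is correct in substance but organized quite differently from the paper's proof. For parts (1)--(2) you argue on the target side: from properties P1--P2 of \Cref{critdblpts} you pass to endomorphisms $\beta_j\in\End(E')$ of degree $\ell^2$ whose reductions modulo $\ell$ all have kernel $D$, and you classify through the algebra $\End(E')\otimes\F_\ell$ and ideal factorization, so that part (1) falls out of the inert/split/(ramified or conductor) trichotomy at once, and $m=3$ pins down $\End(E')=\Z+\ell\Z[\rho]$, after which the volcano fact (the unique $\calO$-stable line is the ascending direction) identifies $E\cong E'/D\cong E_\rho$. The paper instead works mostly on the source side: it composes the isogenies with duals to get $\gamma_{j,k}=\hat\alpha_j\circ\alpha_k\in\End(E)$ of degree $\ell^2$, counts them against the bound of six norm-$\ell^2$ elements in a quadratic order to force $\Aut(E)\neq\{\pm1\}$, then excludes $E\cong E_i$ by factoring in $\Z[i]$; only the bound ``multiplicity $\le 3$'' uses the target ring $\Z[\ell\rho]$. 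Your route buys uniformity at the price of an extra (standard, but here unproved) input about ascending isogenies; the paper's count is more elementary and self-contained. For part (3) the difference is sharper: the paper never classifies coincidences among the $\ell+1$ quotients, but bootstraps --- assuming $E\cong E_\rho$, it picks a double point with $\End(E')=\Z[\ell\rho]$, sets $\beta_3:=\ell^{-1}\beta_2^2$, verifies P1--P3 for it (P2 being the delicate one), and contradicts multiplicity exactly $2$ --- whereas you classify all collisions via $\Aut(E)$-orbits plus the single possible crater collision. This classification is correct and would work, and the bookkeeping you defer is genuinely fillable.

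Two points to repair when you fill it in. First, in the ramified-maximal subcase $\Aut(E')=\calO_K^\times$ is not $\{\pm1\}$; this only helps you, since then all elements $\ell u$ form a single class modulo $\Aut(E')$ and $m=1$ --- but note that your parenthetical appeal to $\ell\ge5$ is not available, because this proposition allows $\ell\in\{2,3\}$ (the hypothesis $\ell\ge5$ appears only in \Cref{theo:repetitions}). Second, in the $E_\rho$ analysis the reason a free $\langle\rho\rangle$-orbit cannot produce a \emph{partial} (double) coincidence is that $\rho^2=\rho^{-1}$, so the conditions for $C\sim\rho C$ and for $C\sim\rho^2 C$ are both equivalent to $\phi^{-1}\circ\rho|_{E_\rho[n]}\circ\phi\in\pm H$; without this observation your claim that $E_\rho$ yields ``only triples together with at most one double'' is unjustified. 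This is exactly the content of the displayed equivalences in the proof of \Cref{prop:Erhoi_Tl}.
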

\begin{proof}
	Parts (\ref{prop:mult3}) and (\ref{prop:rho_bello}).

First we prove that if $T_\ell(E,\phi)$ contains a point with multiplicity \emph{at least} $3$, then $E \cong E_{\rho}$. Let $(E',\phi')\in Y_H(\C)$ be such that $T_\ell(E,\phi)$ contains $(E',\phi')$ with multiplicity at least~$3$. \Cref{critdblpts} implies the existence of isogenies $\alpha_1, \alpha_2, \alpha_3\colon E \to E'$ of degree $\ell$ with different kernels.
For each $j\neq k$, we define $\gamma_{j,k} := \hat \alpha_j \circ \alpha_k \in \End(E)$, where $\hat \alpha_j$ is the dual of $\alpha_j$. Since $\alpha_k \neq \pm \alpha_j$, for all $j\neq k$, then 
\begin{equation}\label{eq:composition_alphas}
\gamma_{j,k} = \hat\alpha_j \circ \alpha_k \neq \pm \hat\alpha_j\circ \alpha_j = \pm \ell \in \End(E),
\end{equation}
so $\gamma_{j,k}$ is cyclic of degree $\ell^2$. In particular $E$ has more than $2$ endomorphisms of degree $\ell^2$, hence it has complex multiplication over some imaginary quadratic field $K$. We suppose by contradiction that $\Aut(E)=\{\pm 1\}$. In this case \Cref{eq:composition_alphas} implies that $\ker (\gamma_{j,k}) \neq \ker(\ell) = E[\ell]$ and consequently that  $\ker (\gamma_{j,k}) \cap E[\ell] $ has order $\ell$. Hence, for all $j \neq k$, we have that $\ker (\gamma_{j,k})\cap E[\ell] = \ker(\alpha_k)$. In particular, the $12$ endomorphisms $\pm \gamma_{j,k}$, with $j \neq k$, are pairwise distinct: if we had $ \hat\alpha_j\circ \alpha_k = \pm  \hat\alpha_r\circ \alpha_m$, then 
\[
\ker (\alpha_k )= \ker (\hat\alpha_j {\circ }\alpha_k)\cap E[\ell] =  \ker (\hat\alpha_r {\circ }\alpha_m)\cap E[\ell] = \ker(\alpha_m) ,
\]
implying $k=m$ and, consequently, $j=r$. Let $\calO_K$ be the ring of integers of $K$ and let $m$ be the unique positive integer such that $\End(E)=\Z + m \calO_K$. If $\ell \mid m$, the only elements of $\Z + m \calO_K$ having norm divisible by $\ell$ belong to $\ell \calO_K$, hence all the elements of $\End(E)$ having degree $\ell^2$ have form $\ell u$, for $u\in \calO_K^\times$. Hence there are at most $6$ of such elements. If $\ell\not \mid m$, the ideals of norm $\ell^2$ inside $\End(E)$ are in bijection with the ideals of norm $\ell^2$ inside $\calO_K$. Hence, by looking at the possible factorizations, there are at most $3$ of such ideals and therefore at most $6$ elements of $\End(E)$ of degree $\ell^2$. In all the cases there are at most $6$ elements of $\End(E)$ of degree $\ell^2$, implying that the elements $\pm \gamma_{j,k}$ are not distinct, which is a contradiction. 

We excluded the cases where $\Aut(E) = \{\pm1 \}$, it remains to exclude the case $E\cong E_i$. We suppose $E\cong E_i$ and we look at $\delta_k := \gamma_{k,3}$, for $k=1,2$. Since $\pm\delta_2, \pm \delta_1, \pm \ell$ are distinct, then at least one of the $\delta_k$'s is not contained in $\{ \pm \ell, \pm i\ell\}$ and, up to renaming, we can suppose that this happens for $\delta_2$. Hence, we can factor  $\delta_2 = i^r \lambda^2$, for some integer $r$ and some prime element $\lambda \in \Z[i]=\End(E)$. We deduce that $\ker(\alpha_3) = \ker(\delta_2) \cap E[\ell] = \ker(\lambda)$ and consequently
\[
E' \cong E/\ker(\alpha_3) =E/\ker(\lambda) \cong E_i.
\]
Up to units of $\End(E)$, there are at most $2$ elements of $\End(E) \cong \text{Hom} (E, E')$ of degree~$\ell$, contradicting the existence of $\alpha_1, \alpha_2, \alpha_3$ and proving that $E \cong E_{\rho}$.

Finally, we prove that $T_\ell(E,\phi)$ does not contain points with multiplicity greater than~$3$. We suppose by contradiction that $T_\ell(E,\phi)$ contains $(E',\phi')$ with multiplicity at least $4$. Because of the previous step, we have $E\cong E_\rho$. Since, up to unit, there are at most $2$ elements of $\End(E_\rho)$ of degree $\ell$, then there are at most $2$ points of $T_\ell(E_\rho,\phi)$ of the form $(E_\rho, \varphi)$ and consequently $E' \not \cong E_\rho$ which is equivalent to $\End(E') \neq \Z[\rho]$. The isogenies between $E$ and $E'$ give an inclusion $\ell\End(E)\subset \End(E')$, implying that $\End(E') = \Z[\ell\rho]$. Hence the only elements in $\End(E')$ of degree $\ell^2$ are $\pm \ell, \pm \rho \ell, \pm\rho^2\ell$, contradicting the existence of $\beta_1, \beta_2,\beta_3, \beta_4$ as in \Cref{critdblpts}, Condition~(\ref{critdblptsbetai}).  This contradiction concludes the proof of Parts (\ref{prop:mult3}) and (\ref{prop:rho_bello}).

Part (\ref{prop:i_bello}).

Let $(E', \phi'), (E'', \phi'') \in Y_H(\C)$ be such that $T_\ell(E,\phi)= 2(E',\phi') + 2 (E'', \phi'')+D$ where the support of $D$ does not contain neither $(E',\phi')$ nor $(E'', \phi'')$.  By \Cref{critdblpts} there are isogenies $\alpha_1, \alpha_2\colon E \to E'$ and $\alpha_3, \alpha_4\colon E \to E''$ 
 such that the subgroups $\ker(\alpha_i)$, for $i=1,\ldots, 4,$ are four different subgroups of $E[\ell]$ of order $\ell$. By looking at the endomorphisms $\gamma_{j,k} := \hat \alpha_j \circ \alpha_k \in \End(E)$ for $(j,k) \in \{(1,2),(2,1), (3,4), (4,3)\}$, we can exclude the case $\Aut(E)=\{\pm1\}$ with the same arguments used for Parts (\ref{prop:mult3}) and (\ref{prop:rho_bello}). To prove Part (\ref{prop:i_bello}) it remains to exclude the case $E\cong E_{\rho}$. We suppose by contradiction $E\cong E_{\rho}$. Since, up to unit, there are at most $2$ elements of $\End(E_\rho)$ of degree $\ell$, then there are at most $2$ points of $T_\ell(E_\rho,\phi)$ of the form $(E_\rho, \varphi)$. Hence we can suppose $E' \not \cong E_\rho$ which, as in the proof of Parts (\ref{prop:mult3}) and (\ref{prop:rho_bello}), implies $\End(E') = \Z[\ell\rho]$. 
By Lemma \ref{critdblpts}, there are $\alpha$, $\beta_1=\ell$ and $\beta_2$ satisfying {\bf P1}, {\bf P2}, {\bf P3} of \Cref{critdblpts}. In $\Z[\ell\rho]$ the only elements of norm $\ell^2$ have form $\rho^k \ell$, hence $\beta_2$ has the same form   for some $k \in \{1,2,4,5\}$. We define $\beta_3 :=  \rho^{2k} \ell =  \ell^{-1}\beta_2^2$ that satisfies property {\bf P3} of Lemma \ref{critdblpts} and, since  $\End(E') = \Z[\ell\rho]$, the elements $\beta_1, \beta_2, \beta_3$ satisfy the property {\bf P1}. We now prove that $\beta_3$ satisfies property {\bf P2} as well. Since $\ker(\alpha)\subset\ker(\beta_2)$, we can write $\beta_2= \gamma\circ\alpha$ for some isogeny $\gamma\colon E_\rho\to E'$ of degree $\ell$. Notice that, if $\alpha\circ \gamma \in \End (E_\rho)$ was not a multiple of $\ell$, then we would have $\alpha\circ \gamma = u \lambda^2$, for some $u \in \Aut(E_\rho)$ and some element $\lambda \in \End(E_\rho)$ of degree~$\ell$, implying
\[
\ker(\lambda) = \ker(u \lambda^2) \cap E_\rho[\ell] \supset \ker(\gamma).
\]
Since $\ker(\lambda)$ and $\ker(\gamma)$ have the same cardinality, this implies that $\ker(\lambda)=\ker(\gamma)$, and therefore  that $E_\rho \cong E_\rho/\ker(\lambda) = E_\rho/\ker(\gamma) \cong E'$, which is absurd. We deduce that $\alpha\circ \gamma = \ell \delta$ for some  $\delta \in \End (E_\rho)$, hence we can write
\[
\beta_3 = \ell^{-1} \beta_2^2 = \gamma \circ \delta \circ \alpha \quad \implies \quad \ker(\alpha)\subset \ker(\beta_3),
\]
which is  property {\bf P2} of Lemma \ref{critdblpts}. Applying \Cref{critdblpts} once again, we deduce that $(E',\phi')$ appears in $T_\ell(E,\phi)$ with multiplicity~$3$, which is a contradiction.
\end{proof}
\begin{prop}\label{prop:Erhoi_Tl}
	Let $n$ be a positive integer, let $H$ be a subgroup of $\GLn$ and let $\ell$ be a prime not dividing $n$.
\begin{enumerate}
\item\label{prop:Erho_Tl} Let $(E_\rho, \phi)\in Y_H(\C)$. The matrix $\phi^{-1}{\circ}\rho|_{E_{\rho}[n]}{\circ}\phi$ lies in $\pm H$ if and only if the divisor $T_\ell(E_\rho,\phi)$ contains a point with multiplicity $3$.
\item\label{prop:Ei_Tl} Let $(E_i, \phi)\in Y_H(\C)$. If $\ell>2$: The matrix $\phi^{-1}{\circ}i|_{E_i[n]}{\circ}\phi$ lies in $\pm H$  if and only if there are two distinct points of $Y_H(\C)$ appearing with multiplicity $2$ in $T_\ell(E_i,\phi)$.
		If $\ell=2$: The matrix $\phi^{-1}{\circ}i|_{E_i[n]}{\circ}\phi$ lies in $\pm H$ if and only if there are two distinct points $P_1, P_2\in Y_H(\C)$ such that $T_2(E_i, \phi) = 2P_1 + P_2$.
\end{enumerate}
\end{prop}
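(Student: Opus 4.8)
The plan is to translate both statements into the language of isogenies via \Cref{critdblpts} and to exploit the extra automorphisms $\rho\in\Aut(E_\rho)$ and $i\in\Aut(E_i)$ together with the fact that $\pm H$ is a group. For an isogeny $\alpha\colon E_\tau\to E'$ of degree $\ell$, write $A_\alpha:=(\phi')^{-1}\circ\alpha|_{E_\tau[n]}\circ\phi\in\GLn$ for its associated matrix (an isomorphism on $n$-torsion since $\ell\nmid n$), so that by \Cref{critdblpts} a point $(E',\phi')$ occurs in $T_\ell(E_\tau,\phi)$ with multiplicity $m$ exactly when there are $m$ degree-$\ell$ isogenies $\alpha_1,\dots,\alpha_m\colon E_\tau\to E'$ with distinct kernels and $A_{\alpha_j}\in\pm H$. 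The elementary observation driving everything is that for $w\in\Aut(E_\tau)$ one has $A_{\alpha\circ w}=A_\alpha\cdot(\phi^{-1}\circ w|_{E_\tau[n]}\circ\phi)$, so precomposing with an automorphism lying in $\pm H$ preserves membership in $\pm H$.

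For the two ``only if'' implications I would argue constructively. Assuming $\phi^{-1}\circ\rho|_{E_\rho[n]}\circ\phi\in\pm H$, note that $\rho$ (of order $3$ modulo $\pm1$) permutes the $\ell+1$ cyclic subgroups of $E_\rho[\ell]$, that a subgroup $C$ is $\rho$-stable precisely when $E_\rho/C\cong E_\rho$, and that there are at most two such $C$; hence a non-stable $C$ always exists. Taking $E'=E_\rho/C$, $\phi'=\pi_C\circ\phi$, $\alpha_1=\pi_C$ gives $A_{\alpha_1}=\mathrm{Id}$, and then $\alpha_2=\pi_C\circ\rho$, $\alpha_3=\pi_C\circ\rho^2$ have the three distinct kernels $\{C,\rho^{-1}C,\rho^{-2}C\}$ and matrices in $\pm H$ by the observation above; so $(E',\phi')$ has multiplicity at least $3$, with equality by \Cref{prop:rhoi_belli}. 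The $E_i$ case is identical with $i$ in place of $\rho$: each non-$i$-stable pair $\{C,iC\}$ yields a point of multiplicity exactly $2$ (multiplicity $3$ is excluded by \Cref{prop:rhoi_belli} as $E_i\not\cong E_\rho$), and since for $\ell>2$ the non-stable subgroups fall into at least two $i$-orbits, whose quotients are pairwise non-isomorphic (each isomorphism class of target being hit by exactly one orbit, cf.\ the rank-one description below), one obtains two distinct such points. When $\ell=2$ the prime ramifies, leaving one stable subgroup and one non-stable orbit, which produces exactly the shape $T_2(E_i,\phi)=2P_1+P_2$.

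For the ``if'' implications I would run this backwards. If $(E',\phi')$ occurs in $T_\ell(E_\rho,\phi)$ with multiplicity $3$, then \Cref{critdblpts} supplies $\alpha_1,\alpha_2,\alpha_3\colon E_\rho\to E'$ with distinct kernels and $A_{\alpha_j}\in\pm H$. First I would show $E'\not\cong E_\rho$: three distinct kernels with quotient $\cong E_\rho$ would be three distinct $\rho$-stable subgroups, which is impossible. Consequently $\End(E')=\Z[\ell\rho]$ and $\Aut(E')=\{\pm1\}$. Since $\Z[\rho]$ is a principal ideal domain, $\mathrm{Hom}(E_\rho,E')$ is free of rank one over $\End(E_\rho)=\Z[\rho]$, so the degree-$\ell$ isogenies $E_\rho\to E'$ are obtained by precomposing a minimal one $\beta_0$ with units, and their kernels are exactly the $\rho$-orbit $\{\ker\beta_0,\rho\ker\beta_0,\rho^2\ker\beta_0\}$. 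After relabelling, $\ker\alpha_2=\rho\ker\alpha_1$, hence $\alpha_2=v\circ\alpha_1\circ\rho^{-1}$ with $v\in\Aut(E')=\{\pm1\}$; then $A_{\alpha_1}^{-1}A_{\alpha_2}=\pm\,\phi^{-1}\circ\rho^{-1}|_{E_\rho[n]}\circ\phi$ lies in the group $\pm H$, giving $\phi^{-1}\circ\rho|_{E_\rho[n]}\circ\phi\in\pm H$. The $E_i$ case is the same: from the prescribed multiplicities one relevant target $E'$ satisfies $E'\not\cong E_i$ (otherwise one would need more $i$-stable subgroups than exist), so $\Aut(E')=\{\pm1\}$, and rank-one freeness of $\mathrm{Hom}(E_i,E')$ over the principal ideal domain $\Z[i]$ forces two of the isogenies to differ by $i^{\pm1}$ up to sign, yielding $\phi^{-1}\circ i|_{E_i[n]}\circ\phi\in\pm H$.

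The main obstacle will be the complex-multiplication bookkeeping behind the ``if'' direction: one must pin down $\End(E')$ and $\Aut(E')$ for the $\ell$-isogenous curve $E'$, and establish that the order-$\ell$ subgroups of $E_\rho$ (resp.\ $E_i$) with a fixed isomorphism class of quotient form a single orbit under $\rho$ (resp.\ $i$). This rests on the class number one property of $\Z[\rho]$ and $\Z[i]$, and some care is required to handle the ramified and inert primes $\ell=2,3$ uniformly with the generic split case.
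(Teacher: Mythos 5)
Your proposal is correct and follows essentially the same route as the paper's proof: both reduce everything to \Cref{critdblpts}, obtain the forward implications by applying it to $\pi_C,\ \pi_C\circ\rho,\ \pi_C\circ\rho^2$ (resp.\ $\pi_C,\ \pi_C\circ i$) for a non-stable subgroup $C$, and obtain the converses from the facts that $E_\rho/C\cong E_\rho$ (resp.\ $E_i/C\cong E_i$) exactly when $C$ is stable, that at most two stable subgroups exist, and that a non-stable quotient $E'$ has $\End(E')=\Z[\ell\rho]$ (resp.\ $\Z[\ell i]$) and $\Aut(E')=\{\pm 1\}$. The only divergence is technical: where you invoke rank-one freeness of $\mathrm{Hom}(E_\rho,E')$ over the principal ideal domain $\Z[\rho]$ (resp.\ of $\mathrm{Hom}(E_i,E')$ over $\Z[i]$) to conclude that the kernels mapping onto a fixed target form a single unit-orbit, the paper instead counts the degree-$\ell^2$ elements of $\Z[\ell\rho]$ (resp.\ $\Z[\ell i]$) and applies \Cref{critdblpts} at level $1$ --- two equivalent uses of the same class-number-one input, so the arguments match step for step.
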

\begin{proof}
Part (\ref{prop:Erho_Tl}).

If $C\subset E_{\rho}[\ell]$ is a subgroup of order $\ell$, then $\rho C$ and $\rho^2 C$ are subgroups of order $\ell$ as well and there are two unique isomorphisms $u, v$ that make the following diagrams commutative:
	\[
	\begin{tikzcd}
	E_\rho\arrow[d, "\pi_C"] \arrow[r, "\rho"] & E_\rho \arrow[d, "\pi_{\rho C}"]\,, && E_\rho\arrow[d, "\pi_C"] \arrow[r, "\rho^2"] & E_\rho \arrow[d, "\pi_{\rho^2 C}"]  \\
	E_\rho/C \arrow[r, "u"] & E_\rho/\rho  C,&&  E_\rho/C \arrow[r, "v"] & E_\rho/\rho^2  C.
	\end{tikzcd}
	\]
We have that $\rho C = C$ if and only if $\rho$ is an endomorphism of $E_\rho/C$, which is in turn equivalent to $\Aut(E_\rho/C) \neq \{\pm 1\}$  or $\End(E_\rho/C) = \Z[\rho]$ and, since the class number of $\Z[\rho]$ is equal to $1$, this is equivalent to $E_\rho/C\cong E_\rho$. Hence, if $\rho C \neq C$, then $\Aut(E_\rho/C) = \{\pm 1\}$ and, using that $\pi_C$ and $\pi_{\rho C}$ are bijections on the $n$-torsion subgroups, we have
\begin{align}\label{eq:equiv_Erho_ramif}
(E_\rho/C, \pi_C{\circ}\phi) = (E_\rho/\rho C, \pi_{\rho C}{\circ}\phi) \iff &  (\pi_{\rho C}|_{E_{\rho}[n]}{\circ}\phi)^{-1}{\circ} u|_{(E_{\rho}/C)[n]} {\circ} (\pi_{C}|_{E_{\rho}[n]}{\circ}\phi) \in \pm H \nonumber \\
\iff &  \phi^{-1}{\circ} \rho|_{E_{\rho}[n]}{\circ}\phi \in \pm H.
\end{align}
Analogously, $\rho^2 C \neq C$ if and only if $\Aut(E_\rho/C) = \{\pm 1\}$ and when this happens
\begin{equation}\label{eq:equiv_Erho_ramif2}
(E_\rho/C, \pi_C{\circ}\phi) = (E_\rho/\rho^2 C, \pi_{\rho^2 C}{\circ}\phi) \quad \iff \quad  \phi^{-1}{\circ} \rho|_{E_{\rho}[n]} {\circ}\phi \in \pm H.
\end{equation}
The endomorphism $\rho$ does not act as a scalar on $E_\rho[\ell]$: if it acted as a scalar $k$, then $\rho-k=\ell a +\ell b \rho$, with $a,b\in \Z$, implying $\ell b=1$ that is impossible. Hence, there are at most two non-trivial subgroups of $E_\rho[\ell]$ that are $\rho$-stable. In particular we can take a non-trivial subgroup $C_0$ such that $C_0$, $\rho C_0$ and $\rho^2 C_0$ are pairwise distinct.

If $\phi^{-1}{\circ} \rho|_{E_{\rho}[n]} {\circ}\phi$ lies in $\pm H$, then, by (\ref{eq:equiv_Erho_ramif}) and (\ref{eq:equiv_Erho_ramif2}),
\[
T_\ell(E_\rho,\phi) \geq (E_\rho/C_0, \pi_{C_0}{\circ}\phi) {+} (E_\rho/\rho C_0, \pi_{\rho C_0}{\circ}\phi) {+} (E_\rho/\rho^2 C_0, \pi_{\rho^2 C_0}{\circ}\phi)  = 3 (E_\rho/C_0, \pi_{C_0}{\circ}\phi).
\]

Conversely, if  $T_\ell(E_\rho,\phi)$ contains a point with multiplicity $3$, there are three pairwise distinct subgroups $C_1,C_2,C_3 \subset E_{\rho}[\ell]$ of order $\ell$ such that
\[
(E_\rho/C_1, \pi_{C_1}{\circ} \phi) =  (E_\rho/C_2, \pi_{C_2}{\circ} \phi) =  (E_\rho/C_3, \pi_{C_3}{\circ} \phi).
\]
If one of the $C_j$ is $\rho$-stable, then $E_\rho/C_1 \cong E_\rho/C_2  \cong E_\rho/C_3 \cong E_\rho$, and $C_1,C_2,C_3$ are all $\rho$-stable, contradicting that there are at most two non-trivial $\rho$-stable subgroups of $E_\rho[\ell]$. 
In particular $\Z[\rho]\supsetneq \End(E_\rho/C_1)$ and since $E/C_1$ is $\ell$-isogenous to $E_\rho$ we deduce that $\End(E_\rho/C_1) = \Z[\ell\rho]$. Hence, the only endomorphisms of $E_\rho/C_1$ having degree $\ell^2$ are $\pm \ell, \pm \rho \ell, \pm \rho^2 \ell$ and so there are at most three subgroups $C \subset E_\rho[\ell]$ of order $\ell$ such that $E_\rho/C$ is isomorphic to $E_\rho/C_1$, namely: $C_1, \rho C_1$ and $\rho^2 C_1$. We deduce that, up to reordering, $C_2 = \rho C_1$ hence, by (\ref{eq:equiv_Erho_ramif}),
$\phi^{-1}{\circ}\rho|_{E_{\rho}[n]}{\circ}\phi$ lies in $\pm H$.

Part (\ref{prop:Ei_Tl}).

If $C\subset E_i[\ell]$ is a subgroup of order $\ell$, then $i C$ is a subgroup of order $\ell$ as well and there is a unique isomorphism $u$ that makes the following diagram commutative:
	\[
	\begin{tikzcd}
	E_i\arrow[d, "\pi_C"] \arrow[r, "i"] & E_i \arrow[d, "\pi_{iC}"] \\
	E_i/C \arrow[r, "u"] & E_i/iC.
	\end{tikzcd}
	\]
	We have that $i C = C$  if and only if $\End(E_i/C) = \Z[i]$ if and only if $\Aut(E_i/C) \neq \{\pm 1\}$. Hence, if $iC \neq C$, then $\Aut(E_i/C) = \{\pm 1\}$ and, using that $\pi_C$ and $\pi_{iC}$ are bijections on the $n$-torsion subgroups, we have
	\begin{equation}\label{eq:equiv_Ei_ramif}
	\begin{aligned}
	(E_i/C, \pi_C{\circ}\phi) = (E_i/i C, \pi_{iC}{\circ}\phi) \quad \iff & \quad (\pi_{iC}{\circ}\phi)^{-1}{\circ} u|_{(E_i/C)[n]} {\circ} (\pi_{C}{\circ}\phi) \in \pm H \\
	\iff & \quad  \phi^{-1}{\circ} i|_{E_i[n]} {\circ}\phi \in \pm H.
	\end{aligned}
	\end{equation}
	Similarly to the action of $\rho$ on $E_\rho[\ell]$, the endomorphism $i$ does not act as multiplication by a scalar on $E_i[\ell]$, hence there are at most two non-trivial subgroups of $E_i[\ell]$ that are $i$-invariant. In other words, for each subgroup $C\subset E_i[\ell]$ of order $\ell$, except at most two, we have $C \neq iC$.
 If $\ell\ge 5$, since $T_\ell(E_i,\phi)$ contains $\ell{+}1\ge 6$ points counted with multiplicity, we deduce the existence of $C_1, C_2$ such that $C_1,C_2,iC_1,iC_2$ are different cyclic subgroups of $E_i[\ell]$. When $\ell=3$, the same conclusion is true because there is no subgroup of $E_i[3]$ which is invariant under the endomorphism~$i$.
If  $\phi^{-1}{\circ}i|_{E_i[n]}{\circ}\phi$ lies in $\pm H$, then, by the equivalences (\ref{eq:equiv_Ei_ramif}), we have
\[
\begin{aligned}
T_\ell(E_i, \phi) &\ge  (E_i/C_1, \pi_{C_1} {\circ} \phi) + (E_i/iC_1, \pi_{iC_1} {\circ} \phi) + (E_i/C_2, \pi_{C_2} {\circ} \phi) + (E_i/iC_2, \pi_{iC_2} {\circ} \phi)= \\
&=  2(E_i/C_1, \pi_{C_1} {\circ} \phi) +2 (E_i/C_2, \pi_{C_2} {\circ} \phi).
\end{aligned}
\]
Moreover $(E_i/C_1, \pi_{C_1} {\circ} \phi)$ and $(E_i/C_2, \pi_{C_2} {\circ} \phi)$ do not appear with multiplicity greater than~$2$ because of Proposition \ref{prop:rhoi_belli}.

Now we assume that there are two distinct points $(E_i/C_1, \pi_{C_1} {\circ} \phi)$ and $(E_i/C_2, \pi_{C_2} {\circ} \phi)$ in $Y_H(\C)$ appearing with multiplicity $2$ in $T_\ell(E_i,\phi)$. Since there are at most two subgroups $C \subset E_i[\ell]$ such that  $C = iC$, then there are at most two points, counted with multiplicity, in $T_\ell(E_i, \phi)$ having form $(E_i, \psi)$. Hence, up to reordering, we have $E_i/C_1 \not \cong E_i$, or equivalently $iC_1 \neq C_1$. Thus $\End(E_i/C_1) = \Z[\ell i]$, and this implies that $\pm \ell$ and $\pm \ell i$ are the only elements of $\End(E_i/C_1)$ having degree $\ell^2$. Thus, applying  \Cref{critdblpts} by checking Condition \ref{critdblptsbetai} on the modular curve $X(1)$ (see also \Cref{rem:level1}), we see that there is at most one cyclic subgroup $C \subset E_i[\ell]$ such that $E_i/C \cong E_i/C_1$ and $C\neq C_1$. Since $C=iC_1$ has this property and since $(E_i/C_1, \pi_{C_1} {\circ} \phi)$ appears in $T_\ell(E_i, \phi)$ with multiplicity $2$, we have
\[
(E_i/C_1, \pi_{C_1}{\circ}\phi) = (E_i/iC_1, \pi_{iC_1}{\circ}\phi),
\]
and by the equivalences (\ref{eq:equiv_Ei_ramif}), we have that $\phi^{-1}{\circ}i|_{E_i[n]}{\circ}\phi$ lies in $\pm H$.

The case $\ell=2$ can be proven with similar arguments.
\end{proof}

\section{Cartan modular curves and their jacobians}\label{Sec:Cartan}
We give the definition of Cartan modular curves following \cite[Appendix A.5]{SerreMordell}. Let $n{>}1$ be an integer and let $A$ be a free commutative étale $\Z/n\Z$-algebra of rank $2$. For each prime $p\mid n$, we have that $A/pA$ is isomorphic either to $\F_p\times\F_p$ or to $\F_{p^2}$: in the former we say that $A$ is \emph{split} at $p$, in the latter we say that $A$ is \emph{non-split} at $p$. Moreover, for every assignment of each prime $p{\mid}n$ to split or non-split, there is a unique, up to isomorphism, algebra $A$ which is split or non-split at every $p\mid n$ accordingly to the assignment.

We fix a $\Z/n\Z$-basis of $A$ and, consequently, we identify the automorphism group of $A$, as $\Z/n\Z$-module, with $\GLn$. The group $A^\times$ of the units of $A$ acts on $A$ by multiplication, giving an embedding of $A^\times$ inside $\GLn$. A subgroup of $\GLn$ which is the image of such an embedding is called a \emph{Cartan subgroup}. 
The normalizer of $A^\times$ inside $\GLn$ contains all the matrices representing automorphisms of the ring $A$, hence $H := \langle A^\times, \Aut_{\tx{Ring}}(A) \rangle$ is a subgroup of $\GLn$ that contains $A^\times$ as normal subgroup. We call every such an $H$ a \emph{Cartan-plus subgroup} of $\GLn$. The natural map $\Aut_{\tx{Ring}}(A) \to \prod_{p\mid n}\Aut_{\tx{Ring}}(A\otimes \F_p)$ is an isomorphism, hence $\Aut_{\tx{Ring}}(A)$ is isomorphic to $(\Z/2\Z)^{\omega(n)}$, where $\omega(n)$ is the number of prime divisors of $n$. In particular, given $A$, the Cartan subgroup has index $2^{\omega(n)}$ inside the Cartan-plus subgroup. Moreover, if $n$ is odd, the Cartan-plus is equal to the normalizer of the Cartan subgroup inside $\GLn$. We call \emph{Cartan modular curves} the modular curves associated to Cartan subgroups or to Cartan-plus subgroups of $\GLn$.

When $n=p^e$ is a prime power, we use the following notation: 
\begin{itemize}
	\item $X_\ns^+(p^e):=X_H$, if $H$ is a Cartan-plus subgroup non-split at $p$;
	\item $X_\ns(p^e):=X_H$, if $H$ is a Cartan subgroup non-split at $p$;
	\item $\Xs^+(p^e):=X_H$, if $H$ is a Cartan-plus subgroup split at $p$;
	\item $\Xs(p^e):=X_H$, if $H$ is a Cartan subgroup split at $p$.
\end{itemize}
\begin{rem}
	If $H_1$ and $H_2$ are two conjugate subgroups of $\GLn$, then the corresponding modular curves $X_{H_1}$ and $X_{H_2}$ are isomorphic. Moreover, given two Cartan or two Cartan-plus subgroups $C_1$ and $C_2$ of $\GLn$ with the same assignment of each prime $p\mid n$ to split or non-split, then $C_1$ and $C_2$ are conjugate, so $X_{C_1}\cong X_{C_2}$. This implies that the above definitions are unambiguous. 
\end{rem}
\begin{rem}\label{rem:isomconj}
Let $H_1$ and $H_2$ be subgroups of $\GLn$ such that $\Gamma_{H_1}=g\Gamma_{H_2}g^{-1}$ for a suitable $g\in\mathrm{GL}_2(\Q)$ with $\det(g)>0$. In this case there is an isomorphism of Riemann surfaces given by
\begin{align*}
X_{H_1}(\C)&\to X_{H_2}(\C), \\
\Gamma_{H_1}\tau&\mapsto \Gamma_{H_2}g(\tau).
\end{align*}
See \cite[Section 5.1]{DS} for more details about this.
\end{rem}
We want to understand the structure, up to isogeny, of the jacobian of the Cartan modular curves. This is achieved using Chen's isogenies (see \cite{Chen}, \cite{Edix},\cite{ChenPowers}). Let $p$ be a prime and let $e$ be a positive integer. We give an analogous of \cite[Theorem~1.1]{ChenPowers} involving the jacobian of $X_\ns(p^e)$ for every $p$, and, to do this, we extend the analysis in \cite{ChenPowers} to the case $p=2$. In order to state our result, we choose a non-square element $\xi\in (\Z/p^e\Z)^\times$ when $p$ is odd and define the following subgroups of $\textnormal{GL}_2(\Z/p^e\Z)$ for every prime $p$:
\allowdisplaybreaks
\begin{align*}
&C_{\textnormal{s}}(p^e):=\left\{\begin{pmatrix}a & 0 \\ 0 & d \end{pmatrix}, a,d\in(\Z/p^e\Z)^\times \right\}; \\
&C_{\textnormal{s}}^+(p^e):=C_{\textnormal{s}}(p^e)\cup\left\{\begin{pmatrix}0 & b \\ c & 0 \end{pmatrix}, b,c\in(\Z/p^e\Z)^\times \right\}; \\
&C_\ns(2^e):=	\left\{\begin{pmatrix}a & b \\ b & a+b \end{pmatrix}, a,b\in\Z/2^e\Z,  (a,b) \not\equiv (0,0) \bmod 2 \right\}; \\
&C_\ns^+(2^e):=C_\ns(2^e)\cup\left\{\begin{pmatrix}a & a-b \\ b & -a \end{pmatrix}, a,b\in\Z/2^e\Z, (a,b) \not\equiv (0,0) \bmod 2 \right\}; \\
&C_\ns(p^e):=	\left\{\begin{pmatrix}a & b\xi \\ b & a \end{pmatrix}, a,b\in\Z/p^e\Z,  (a,b) \not\equiv (0,0) \bmod p \right\}, \quad \tx{if $p$ is odd}; \\ 
&C_\ns^+(p^e):=C_\ns(p^e)\cup\left\{\begin{pmatrix}a & b\xi \\ -b & -a \end{pmatrix}, a,b\in\Z/p^e\Z, (a,b) \not\equiv (0,0) \bmod p \right\}, \quad \tx{if $p$ is odd}; \\
&B_r(p^e):=\left\{\begin{pmatrix}a & bp^r \\ cp^{r+1} & d \end{pmatrix}, a,b,c,d\in\Z/p^e\Z, \quad ad \not\equiv 0 \bmod p \right\}, \quad \text{for }r=0,1,\ldots,e-1; \\
&T_r(p^e):=\left\{\begin{pmatrix}a & bp^r\\ cp^r & d \end{pmatrix}, a,b,c,d\in\Z/p^e\Z, ad-bcp^{2r} \in (\Z/p^e\Z)^\times \right\}, \quad \text{for }r=0,1,\ldots,e.
\end{align*}
\allowdisplaybreaks[0]
We remark that $T_e(p^e)=C_{\textnormal{s}}(p^e)$ and that $C_{\tx{s}}(p^e), C_\ns(p^e)$ are respectively a split and a non-split Cartan subgroup of $\GL(\Z/p^e\Z)$ and $C_{\tx{s}}^+(p^e), C_\ns^+(p^e)$ are the corresponding Cartan-plus subgroups. 
\begin{prop}\label{lem:reprisom}
	Let $p$ be a prime, let $e$ be a positive integer and let $G=\GL(\Z/p^e\Z)$. We have the following isomorphism of $\Q$-representations of $G$:
	\begin{equation}\label{eq:chenisogeny}
	\Q[G/C_\ns(p^e)] \oplus \bigoplus_{r=0}^{e-1} 2\Q[G/B_r(p^e)] \cong \Q[G/C_{\textnormal{s}}(p^e)] \oplus \bigoplus_{r=0}^{e-1} 2\Q[G/T_r(p^e)].
	\end{equation}
\end{prop}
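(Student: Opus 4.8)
The plan is to reduce the claimed isomorphism of $\Q$-representations to an identity of ordinary characters, and then to a family of elementary counting statements about the five subgroups involved. Since both sides of \eqref{eq:chenisogeny} are honest (positive) sums of rational permutation representations, the Noether--Deuring theorem guarantees that two $\Q[G]$-modules which become isomorphic after extending scalars to $\C$ are already isomorphic over $\Q$; hence it suffices to prove that the two sides have the same complex character. For any subgroup $H<G$ the permutation character of $\Q[G/H]$ is the fixed-point count
\[
\chi_{G/H}(g)=\#\{xH\in G/H:\ x^{-1}gx\in H\}=\frac{|C_G(g)|}{|H|}\,\big|H\cap \mathrm{Cl}_G(g)\big|,
\]
where $\mathrm{Cl}_G(g)$ is the conjugacy class of $g$ and $C_G(g)$ its centraliser. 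Cancelling the common factor $|C_G(g)|$ and writing $K=\mathrm{Cl}_G(g)$ (suppressing the argument $(p^e)$ on the subgroups), the whole proposition becomes the assertion that, for every $g\in G$,
\[
\frac{|C_\ns\cap K|}{|C_\ns|}+2\sum_{r=0}^{e-1}\frac{|B_r\cap K|}{|B_r|}
=\frac{|C_{\s}\cap K|}{|C_{\s}|}+2\sum_{r=0}^{e-1}\frac{|T_r\cap K|}{|T_r|}.
\]
Two sanity checks come for free: taking $g$ scalar recovers the equality of dimensions $[G:C_\ns]+2\sum_r[G:B_r]=[G:C_\s]+2\sum_r[G:T_r]$, and since $T_0(p^e)=G$ the corresponding summands are just copies of the trivial representation.

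The next step is to observe that each term depends only on mild invariants of $g$, so that $G$ can be stratified into finitely many cases. Up to the factor $[N_G(H):H]$, the count $\chi_{G/H}(g)$ is the number of conjugates of $H$ containing $g$, and these conjugates are stabilisers of geometric data on the module $V=(\Z/p^e\Z)^2$: an ordered pair of complementary free rank-one summands for $C_{\s}$, a free rank-one module structure over the nonsplit \'etale algebra for $C_\ns$, and ``congruence flags'' of depth $r$ for $B_r$ and $T_r$. Consequently all four counts are determined by the characteristic polynomial of $g$ together with the structure of the subalgebra $\Z/p^e\Z[g]\subseteq\End(V)$: concretely, by the factorisation type of $g$ (scalar, split semisimple, nonsplit, or non-semisimple) and, in the degenerate cases, by the largest $j$ for which the two eigenvalues of $g$ agree modulo $p^j$. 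I would verify the identity stratum by stratum: in the generic split and nonsplit strata the equality reproduces the classical Chen--Edixhoven relation termwise, whereas in the degenerate strata the depth-$j$ filtration of $g$ should be matched exactly by the $r$-indexed sums $\sum_r[G/B_r]$ and $\sum_r[G/T_r]$, which I expect to telescope so that the surplus contributions cancel in pairs.

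The computational heart of the argument, and where I anticipate spending the most effort, is the explicit evaluation of $|B_r\cap K|$ and $|T_r\cap K|$ in the degenerate strata: there the order of the centraliser of $g$ inside each subgroup varies with $r$, and one must count matrices with prescribed characteristic polynomial lying in a congruence subgroup. An equivalent and possibly cleaner route, which I would keep in reserve, is to bypass the counting entirely by exhibiting explicit $G$-equivariant correspondences between the underlying sets (the ``Chen correspondence'' relating pairs of $\ell$-isogenous lattices, as in \cite{Chen} and \cite{Edix}), thereby proving the isomorphism structurally rather than through characters.

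The genuinely new difficulty, relative to \cite{ChenPowers}, is the prime $p=2$. There the nonsplit Cartan is not of the form $\smt{a}{b\xi}{b}{a}$ but is given by the companion-type matrices $\smt{a}{b}{b}{a+b}$ of the statement, reflecting that the relevant irreducible quadratic is $x^2+x+c$ rather than $x^2-\xi$; moreover $(\Z/2^e\Z)^\times$ is not cyclic and ramification at $2$ alters the arithmetic of $\Z/2^e\Z[g]$. I would therefore rerun the stratified count with the $p=2$ definitions of $C_\ns(2^e)$ and $C_\ns^+(2^e)$ in force, checking that the order $|C_\ns(2^e)|=2^{2e-2}(2^2-1)$ and the various fixed-point counts still satisfy the same identity. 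The bookkeeping is more delicate, but the structure of the argument is unchanged, and this verification is precisely what extends \cite[Theorem~1.1]{ChenPowers} to even level.
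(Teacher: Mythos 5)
Your proposal is correct and is essentially the paper's own argument: the paper likewise reduces the isomorphism of $\Q$-representations to an equality of permutation characters, evaluates $\chi_H(\gamma)=\#\{g\in G: g^{-1}\gamma g\in H\}/\#H$ on conjugacy-class representatives (those of Chen's Table 2, i.e.\ your stratification by characteristic polynomial and depth), and settles the identity case by case, citing Chen's character tables for odd $p$ and computing a new character table for $p=2$ in its Appendix. The only difference is one of completeness rather than method: where you sketch the stratum-by-stratum count (keeping a correspondence-based argument in reserve), the paper records the finished character values in tables and concludes by a straightforward summation.
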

\begin{proof}
We follow the same strategy of \cite{ChenPowers}. It is enough to prove that the representation on the right hand side has the same character of the representation on the left hand side. For every subgroup $H\subset G$, the character $\chi_H$ of the representation $\Q[G/H]$ is not hard to compute: for each $\gamma \in G$ we have $\gamma(gH) = (\gamma g)H$, hence, with respect to the basis $\{gH \}$, the matrix $M_\gamma$ associated to the action of $\gamma$ is a permutation matrix and consequently
\begin{equation}\label{eq:characters}
\chi_{H}(\gamma) = \text{tr}(M_\gamma)= \# \{ gH: \gamma g H = g H \} = 
\frac{\# \{ g: \gamma g  \in  g H \} }{\# H} = \frac{\# \{ g: g^{-1}\gamma g  \in  H \}}{\# H}.
\end{equation}
Hence to compute $\chi_{H}(\gamma) $ it is enough to compute $\# \{ g: g^{-1}\gamma g  \in  H \}$ and $\# H$. It is enough to compute $\chi_{H}(\gamma) $ for a set of representatives $\gamma$ up to conjugation. We choose the same representatives as in \cite[Table 2]{ChenPowers}.

If $p=2$, the character $\chi_H$ for the groups appearing in the statement is computed in the \nameref{sec:appendix} of this article. If $p$ is odd and $H$ has the form $B_r, T_r$ or $C_{\tx s}$, the character $\chi_H$ is given in \cite[Tables 3 and 4]{ChenPowers}; if $p$ is odd and $H=C_\ns(p^e)$,  then
\[
\chi_H(g)=\begin{cases}
(p{-}1)p^{2e-1}, & \text{if $g$ is a scalar matrix (type $I$ in \cite[Tables 3, 4]{ChenPowers})}, \\
2p^{2\mu}, & \text{if $g$ is a conjugate of $\smt{\alpha}{\xi\beta p^\mu}{\beta p^\mu}{\alpha}$, with $\beta \in (\Z/p^e\Z)^\times$} \\
 & \text{ and $0\le \mu<e-1$ (types $RI'_\mu$ and $T'$ in \cite[Tables 3, 4]{ChenPowers})}, \\
0, & \text{otherwise}.
\end{cases}
\]
The characters of the representations in Equation (\ref{eq:chenisogeny}) are sums of the previous characters. A straightforward computation proves the proposition.
\end{proof}
From the previous result about representations follows a result about jacobians of modular curves.
\begin{prop}\label{prop:Chen_Xns(p^e)}
Let $p$ be a prime, let $e$ be a positive integer and let $J_\ns(p^e)$ be the jacobian of $X_\ns(p^e)$. We have the following isogenies over $\Q$:
\[
J_\ns(p^e)\times \prod_{r=0}^{e-1} J_0(p^{2r+1})^2\sim J_0(p^{2e})\times \prod_{r=0}^{e-1} J_0(p^{2r})^2, \qquad J_\ns(p^e) \sim \prod_{r=1}^{e} J_0^{\tx{new}}(p^{2r}).
\]
\end{prop}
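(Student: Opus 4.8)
The plan is to deduce both isogeny relations in \Cref{prop:Chen_Xns(p^e)} from the representation-theoretic identity in \Cref{lem:reprisom} using the standard dictionary between permutation representations of $G = \GL(\Z/p^e\Z)$ and modular jacobians. The key observation is that for any subgroup $H < G$ (with $-\mathrm{Id} \in H$ and $\det H = (\Z/p^e\Z)^\times$), the jacobian $\mathrm{Jac}(X_H)$ is isogenous over $\Q$ to the piece of $\mathrm{Jac}(X(p^e))$ cut out by the idempotent associated to $\Q[G/H]$; equivalently, the isotypic decomposition of $\mathrm{Jac}(X_H)$ corresponds to the decomposition of $\Q[G/H]$ into irreducible $\Q$-representations of $G$. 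Since isogeny classes of abelian varieties add under direct sum and this correspondence is additive and respects multiplicities, any isomorphism of $\Q$-representations $\bigoplus_i \Q[G/H_i] \cong \bigoplus_j \Q[G/K_j]$ upgrades to an isogeny $\prod_i \mathrm{Jac}(X_{H_i}) \sim \prod_j \mathrm{Jac}(X_{K_j})$ over $\Q$.

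First I would identify the modular curves attached to each subgroup appearing in \eqref{eq:chenisogeny}. By construction $C_\ns(p^e)$ gives $X_\ns(p^e)$ and $C_\s(p^e)$ gives $X_\s(p^e)$; the groups $T_r(p^e)$ and $B_r(p^e)$ are (conjugates of) the subgroups whose associated modular curves are the classical Borel-type curves, so that $X_{B_r(p^e)} \cong X_0(p^e)$ for each $r$ and $X_{T_r(p^e)}$ is the curve whose jacobian is $J_0(p^{2(e-r)})$ up to isogeny --- one checks via the index and the explicit matrix description that $T_r(p^e)$ corresponds to a $\Gamma_0$-structure of conductor $p^{2(e-r)}$, matching the pattern in \cite{ChenPowers}. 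Substituting these identifications into the representation isomorphism, and being careful that the reindexing of the $T_r$ and $B_r$ summands produces exactly the products $\prod_{r=0}^{e-1} J_0(p^{2r+1})^2$ and $\prod_{r=0}^{e-1} J_0(p^{2r})^2$, yields the first isogeny of the proposition directly.

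For the second isogeny, the plan is to combine the first relation with the well-known decomposition of $J_0(p^{2r})$ into new parts. By the Atkin--Lehner--Li theory of newforms, $J_0(N) \sim \prod_{d \mid N} J_0^{\new}(d)^{\sigma_0(N/d)}$, and in particular each $J_0(p^{2r})$ decomposes as a product of the $J_0^{\new}(p^{2s})$ for $0 \le s \le r$ with explicit multiplicities. I would substitute these new-part decompositions for every Borel factor on both sides of the first isogeny, then cancel the common isogeny factors (legitimate since the relevant abelian varieties have the same isogeny type and appear on both sides), and verify by a bookkeeping argument on the multiplicities of each $J_0^{\new}(p^{2r})$ that what survives is exactly $\prod_{r=1}^{e} J_0^{\new}(p^{2r})$. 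This telescoping computation is essentially the same one carried out in \cite{ChenPowers} for odd $p$, so it requires only checking that the new case $p=2$ produces no discrepancy.

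The main obstacle I anticipate is not conceptual but combinatorial: correctly matching each subgroup $T_r, B_r$ to the right conductor and multiplicity, and then executing the multiplicity bookkeeping for the newform decomposition so that all the old pieces cancel cleanly and only $\prod_{r=1}^e J_0^{\new}(p^{2r})$ remains. A secondary technical point worth flagging is the passage from an isomorphism of \emph{$\Q$-representations} to an isogeny of jacobians over $\Q$: this relies on the fact that the correspondence $\Q[G/H] \mapsto \mathrm{Jac}(X_H)$ intertwines the $\Q$-structure on both sides, so that a $G$-equivariant $\Q$-linear isomorphism of permutation modules transports to a $\Q$-rational isogeny; I would cite \cite{Chen} and \cite{Edix} for this principle, exactly as in \cite{ChenPowers}, rather than reprove it. The even case $p=2$ needs the character computation relegated to the Appendix, but once that input is granted the argument is uniform in $p$.
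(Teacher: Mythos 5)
Your overall strategy coincides with the paper's: \Cref{lem:reprisom} combined with Edixhoven's principle from \cite{Edix} (an isomorphism of permutation $\Q$-representations of $G$ yields a $\Q$-isogeny of the corresponding jacobians), followed by the decomposition $J_0(p^m)\sim\prod_{k=0}^{m}J_0^\new(p^k)^{\sigma_0(p^{m-k})}$ and cancellation in the isogeny category, which is indeed how the second isogeny is obtained. The genuine gap lies in the one computation you defer: the identification of the curves $X_{B_r(p^e)}$ and $X_{T_r(p^e)}$. You assert $X_{B_r(p^e)}\cong X_0(p^e)$ for every $r$, and that $T_r(p^e)$ corresponds to conductor $p^{2(e-r)}$. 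Both are wrong. Conjugating by $w_{p^r}:=\smt{0}{-1}{p^r}{0}$ gives $w_{p^r}\Gamma_{B_r(p^e)}w_{p^r}^{-1}=\Gamma_0(p^{2r+1})$ and $w_{p^r}\Gamma_{T_r(p^e)}w_{p^r}^{-1}=\Gamma_0(p^{2r})$, hence by \Cref{rem:isomconj} we have $X_{B_r(p^e)}\cong X_0(p^{2r+1})$ and $X_{T_r(p^e)}\cong X_0(p^{2r})$: the conductor \emph{increases} with $r$. Your formula for $T_r$ is even internally inconsistent: $T_0(p^e)=\GL(\Z/p^e\Z)$, whose curve is $X(1)$, not a curve of conductor $p^{2e}$, while $T_e(p^e)=C_{\textnormal{s}}(p^e)$, whose curve is $X_0(p^{2e})$, not $X(1)$. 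Note also that it is precisely this last identification that produces the factor $J_0(p^{2e})$ appearing in the statement; your proposal never supplies it, since you leave $\mathrm{Jac}(\Xs(p^e))$ unidentified with any Borel jacobian.

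This is not a reindexing issue that bookkeeping can absorb. Substituting your identifications into \Cref{eq:chenisogeny} would yield
\[
J_\ns(p^e)\times J_0(p^e)^{2e}\sim \mathrm{Jac}(\Xs(p^e))\times\prod_{s=1}^{e}J_0(p^{2s})^2,
\]
which is false: for $e=1$ it reads $J_\ns(p)\times J_0(p)^2\sim J_0(p^2)^3$, whereas the correct relation (and the proposition) gives $J_\ns(p)\times J_0(p)^2\sim J_0(p^2)$, and the dimensions disagree as soon as $g_0(p^2)>0$. Once the identifications are corrected as above, the remainder of your argument --- the $\Q$-rationality of the isogeny via \cite{Edix}, the telescoping of new parts, and cancellation up to isogeny (valid by Poincar\'e reducibility) --- goes through and is essentially the paper's proof.
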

\begin{proof}
For every $r=0,\ldots,e-1$, let 
$w_{p^r}:=\smt{0}{-1}{p^r}{0}$, then we have
\begin{align*}
w_{p^{r}}\Gamma_{B_r(p^e)}w_{p^{r}}^{-1}=\Gamma_0(p^{2r+1}) \qquad \text{and} \qquad w_{p^r}\Gamma_{T_r(p^e)}w_{p^r}^{-1}=\Gamma_0(p^{2r}),
\end{align*}
that, by \Cref{rem:isomconj}, imply
\begin{align}\label{eq:isomconj}
X_0(p^{2r+1})\cong X_{B_r(p^e)} \qquad \text{and} \qquad X_0(p^{2r})\cong X_{T_r(p^e)}
\end{align}
respectively.

As explained in \cite[Théorème 2 and the discussion below it]{Edix}, the representation theoretic result in \Cref{lem:reprisom}, together with the isomorphisms in \Cref{eq:isomconj}, implies the first isogeny. The argument to prove the second isogeny is the same, but we also need the isogeny $J_0(p^e)\sim \prod_{r=0}^{e}J_0^\new(p^r)^{\sigma_0\left(p^{e-r}\right)}$, where $\sigma_0(m)$ is the number of positive divisors of the integer $m$.
\end{proof}
The analogous statement for $J^+_\ns(p^e)$ and $p$ odd is given in \cite[Theorem 1.2]{ChenPowers}. For jacobians of Cartan curves of composite level we have the following theorem.

\begin{thm}\label{cor:Chen_too_general}
Let $n>1$ be an integer and let $H<\GLn$ be a Cartan or a Cartan-plus subgroup. Then the jacobian of $X_H$ is a quotient of $J_0(n^2)$. More precisely, if $H$ is a Cartan subgroup, we have:
\begin{equation}\label{Cartan-Borel-isogeny-fine}
\mathrm{Jac}(X_H) \sim\prod_{\substack{ c|a^2 \\ d|b }}J_0^\new(cd^2)^{\sigma_0\left(\frac{a^2}{c}\right)},
\end{equation}
where $\sigma_0(m)$ is the number of positive divisors of an integer $m$ and $a,b$ are positive integers such that $n=ab$ and such that $H$ is split at all primes dividing $a$ and non-split at all primes dividing $b$.
\end{thm}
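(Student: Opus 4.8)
The plan is to reduce the statement to the prime-power result of \Cref{prop:Chen_Xns(p^e)} by a Chinese Remainder decomposition of the group together with a tensor product of $\Q$-representations, and then to apply Edixhoven's dictionary \cite{Edix} exactly as in the proof of \Cref{prop:Chen_Xns(p^e)}. Write $n=\prod_{p\mid n}p^{e_p}$, so that $G:=\GLn\cong\prod_{p\mid n}G_p$ with $G_p=\GL(\Z/p^{e_p}\Z)$, and $H=\prod_{p\mid n}H_p$ where $H_p=C_\s(p^{e_p})$ for $p\mid a$ and $H_p=C_\ns(p^{e_p})$ for $p\mid b$. Correspondingly, the permutation representation factors as a tensor product
\[
\Q[G/H]\cong\bigotimes_{p\mid n}\Q[G_p/H_p].
\]

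For the split primes $p\mid a$ we have $C_\s(p^{e_p})=T_{e_p}(p^{e_p})$, so by \Cref{eq:isomconj} the factor $\Q[G_p/H_p]$ is already of ``Borel type'' and corresponds to $X_0(p^{2e_p})$. For the non-split primes $p\mid b$ I would substitute the identity of \Cref{lem:reprisom}, which in the Grothendieck ring of $\Q$-representations of $G_p$ reads
\[
[\Q[G_p/C_\ns(p^{e_p})]]=[\Q[G_p/T_{e_p}(p^{e_p})]]+2\sum_{r=0}^{e_p-1}[\Q[G_p/T_r(p^{e_p})]]-2\sum_{r=0}^{e_p-1}[\Q[G_p/B_r(p^{e_p})]].
\]
Distributing the tensor product over these local identities expresses $[\Q[G/H]]$ as a $\Z$-linear combination of classes $[\Q[G/\textstyle\prod_p K_p]]$ in which every local factor $K_p$ is of the form $T_r(p^{e_p})$ or $B_r(p^{e_p})$.

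The next step is a Chinese Remainder conjugation lemma generalizing \Cref{eq:isomconj}: using a product of the matrices $w_{p^r}=\smt{0}{-1}{p^r}{0}$ one shows that $\Gamma_{\prod_p K_p}$ is conjugate in $\mathrm{GL}_2(\Q)$, by an element of positive determinant, to $\Gamma_0(M)$, where $M$ is the product of the local levels (so $M\mid n^2$); hence $X_{\prod_p K_p}\cong X_0(M)$ by \Cref{rem:isomconj}. Applying Edixhoven's theorem to the resulting representation identity over $G$ then turns it into a virtual isogeny relation expressing $\mathrm{Jac}(X_H)$ through the jacobians $J_0(M)$. I would finally pass to new parts via $J_0(M)\sim\prod_{m\mid M}J_0^\new(m)^{\sigma_0(M/m)}$: since $\sigma_0$ is multiplicative and every divisor factors over the primes, the multiplicity of a fixed $J_0^\new(M)$ with $M=\prod_pp^{k_p}\mid n^2$ in $\mathrm{Jac}(X_H)$ is the product $\prod_{p\mid n}\mu_p(k_p)$ of the local multiplicities. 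A direct computation gives $\mu_p(k)=\sigma_0(p^{2e_p-k})$ for split $p$ (for all $0\le k\le 2e_p$) and, for non-split $p$, $\mu_p(k)=1$ if $k$ is even with $0\le k\le 2e_p$ and $\mu_p(k)=0$ otherwise; writing $k=c_p$ at split primes and $k=2d_p$ at non-split primes reproduces exactly the exponent $\sigma_0(a^2/c)$ and the levels $cd^2$ of the claimed formula.

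The \emph{main technical point} is the Chinese Remainder conjugation lemma together with the verification that Edixhoven's dictionary, stated for a single $\GLn$, applies verbatim to the composite virtual tensor identity. A subtle accounting issue is that the non-split \emph{unramified} multiplicity $\mu_p(0)=1$ is invisible in the prime-power isogeny of \Cref{prop:Chen_Xns(p^e)}, where it only multiplies $J_0^\new(1)=0$; it becomes essential in the composite case, where it pairs with ramified components at the split primes (one checks this already in the example $n=qp$ with $q$ split and $p$ non-split). The ``quotient of $J_0(n^2)$'' assertion then follows at once: each factor $J_0^\new(cd^2)$ satisfies $\sigma_0(n^2/cd^2)=\sigma_0(a^2/c)\,\sigma_0(b^2/d^2)\ge\sigma_0(a^2/c)$, so it occurs in $J_0(n^2)\sim\prod_{m\mid n^2}J_0^\new(m)^{\sigma_0(n^2/m)}$ with at least the required multiplicity, exhibiting $\mathrm{Jac}(X_H)$ as an isogeny factor, hence a quotient up to isogeny, of $J_0(n^2)$; for a Cartan-plus $H$ one precomposes with the quotient map from the corresponding Cartan curve.
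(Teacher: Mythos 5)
Your proposal is correct and follows essentially the same route as the paper's own proof: the Chinese Remainder factorization of $G$ and $H$, the substitution of the identity of \Cref{lem:reprisom} at the non-split primes inside the representation ring, the conjugation identifying each $X_{K(d)}$ with $X_0(a^2d)$, Edixhoven's dictionary, and the passage to new parts, where your local multiplicities $\mu_p(k)$ (equal to $\sigma_0(p^{2e_p-k})$ at split primes, and to $1$ for even $k\le 2e_p$ and $0$ for odd $k$ at non-split primes) are exactly the paper's bookkeeping via the multiplicative functions $\varepsilon(d)$ and $m(d)$. One correction to the step you single out as the main technical point: the conjugating element cannot be a literal product of the matrices $w_{p^r}$, since such a product is diagonal (or antidiagonal) and conjugating $\Gamma_{K(d)}$ by it leaves congruence conditions on \emph{both} off-diagonal entries rather than producing $\Gamma_0(a^2d)$; the lemma you state is nevertheless true, and the fix is the one the paper uses, namely conjugation by the single matrix $\smt{0}{-1}{m}{0}$ where $m=\prod_p p^{r_p}$ is the product of all the local exponents (the paper's $ad_1d_2$), for which a direct computation gives $\smt{0}{-1}{m}{0}\,\Gamma_{K(d)}\,\smt{0}{-1}{m}{0}^{-1}=\Gamma_0(a^2d)$.
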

\begin{proof}
Since all the Cartan-plus subgroups contain a Cartan subgroup, we can suppose that $H$ is a Cartan subgroup. If $b=1$, then $X_H(n)\cong X_0(n^2)$. Thus, we suppose that $b>1$.
Let $b=p_1^{e_1}\cdots p_k^{e_k}$ be the prime factorization of $b$ and for each $j =1,\dots, k$, we set $G_j:=\GL(\Z/p_j^{e_j}\Z)$ and $H_j:= C_{\ns}(p_j^{e_j})<G_j$. Moreover we set $G := \GL(\Z/n\Z)$ and $G_\s := \GL(\Z/a\Z)$, and we choose a totally split Cartan subgroup $H_\s<G_\s$.
Chinese Remainder Theorem gives an identification between $G$ and $G_\s\times \prod_{j=1}^k G_j$ sending $H$ to a conjugate of $H_s \times\prod_{j=1}^{k} H_j$. 

Instead of working with $G$-representations up to isomorphism, it is easier to work inside the representation ring of $G$, namely the Grothendieck ring of the category of finite-dimensional $G$-representations, where we can take differences of representations.
By \Cref{lem:reprisom} we have the following equality in the representation ring of $G_j$ over $\Q$:
\begin{equation*}
\Q\big[G_j/H_j\big] =  \Q\big[ G_j/K_j(p_j^{2e_j}) \big] + 2 \sum_{i=0}^{2e_j-1} (-1)^{i} \Q\big[ G_j/K_j(p_j^{i}) \big],
\end{equation*}
where $K_j(p_j^{2r}):=T_{r}(p_j^{e_j})$, for $r=0,\ldots,e_j$, and $K_j(p_j^{2r+1}):=B_{r}(p_j^{e_j})$, for $r=0,\ldots,e_j{-}1$. Interpreting $G_j$-representations as $G$-representations via the reduction modulo $p_j^{e_j}$ map, the above equality also holds in the representation ring of $G$ over $\Q$. We now get information about the representation $\Q[G/H]$ by taking the tensor product of the above identities, for $j=1,\ldots,k$, and using that, for all the groups $\mathcal G_1, \mathcal G_2$ and all the subgroups $\mathcal H_i < \mathcal G_i$, we have the isomorphisms of $(\mathcal G_1 {\times} \mathcal G_2)$-representations 
\[
\Q[\mathcal G_1/\mathcal H_1] \otimes \Q[\mathcal G_2/\mathcal H_2] \cong \Q[(\mathcal G_1{\times} \mathcal G_2)/(\mathcal H_1 {\times} \mathcal H_2)] .
\]
Denoting by $\otimes$ the product in the representation ring of $G$ over $\Q$, we have 
\begin{equation}\label{eq:repr_ring}
\begin{aligned}
\Q\big[G/H\big] &= \Q\big[G_\s/H_s\big] \otimes \bigotimes_{j=1}^k \Q\big[G_j/ H_j\big]= \\ 
& =  \Q\big[G_\s/H_s\big] \otimes \bigotimes_{j=1}^k \left(  \Q\big[ G_j/K_j(p_j^{2e_j}) \big] + 2 \sum_{i=0}^{2e_j-1} (-1)^{i} \Q\big[ G_j/K_j(p_j^{i}) \big] \right)= \\
& = \sum_{d\mid b^2} \varepsilon(d) {m(d)} \Q\big[G/K(d)\big] ,
\end{aligned}
\end{equation}
where, for every $d= p_1^{f_1} \cdots p_k^{f_k}$ dividing $b^2$, we have
\[
\varepsilon(d) := (-1)^{f_1 + \ldots +f_k}, \quad m(d) := 2^{\# \{j : f_j\neq 2e_j\}} , \quad K(d) := H_s \times \prod_{j=1}^k K_j(p_j^{f_j}) < G.
\]
As explained in \cite{Edix}, Equation (\ref{eq:repr_ring}) implies the following equality in the Grothendieck group of the category of abelian varieties over $\Q$ up to isogeny:
\[
\mathrm{Jac}(X_H) \sim \prod_{d\mid b^2} \mathrm{Jac}(X_{K(d)})^{\varepsilon(d) m(d)}.
\]
Let 
\[
d_1:=\prod_{\substack{j=1 \\ f_j \text{ even}}}^k p_j^{\frac{f_j}{2}}, \quad d_2:=\prod_{\substack{j=1 \\ f_j \text{ odd}}}^k p_j^{\frac{f_j-1}{2}}, \quad p_o:=\prod_{\substack{j=1 \\ f_j \text{ odd}}}^k p_j.
\]
We have $d_1^2d_2^2p_o=d$ and the elements of $\Gamma_{K(d)}$ are exaclty those of the form $\smt{\alpha}{\beta a d_1 d_2 p_o}{\gamma ad_1d_2}{\delta}$, with $\alpha,\beta,\gamma,\delta\in\Z$ and $\alpha\delta-\beta\gamma a^2d=1$. Hence, $w_{ad_1d_2}\Gamma_{K(d)}w_{ad_1d_2}^{-1}=\Gamma_0(a^2d)$, where $w_{ad_1d_2}:=\smt{0}{-1}{ad_1d_2}{0}$. By \Cref{rem:isomconj}, this gives an isomorphism $X_0(a^2d)\cong X_{K(d)}$ and consequently we have
\[
\mathrm{Jac}(X_H) \sim  \prod_{d\mid b^2} J_0(a^2d)^{\varepsilon(d)m(d)}.
\]
Using $J_0(a^2d)\sim \prod_{m|a^2d}J_0^\new(m)^{\sigma_0\left(\frac{a^2d}{m}\right)}$ and the multiplicativity of the arithmetic functions $\varepsilon(d)$ and $m(d)$, one can compute that
\begin{equation*}
\mathrm{Jac}(X_H) \sim \prod_{d\mid b^2} J_0(a^2d)^{\varepsilon(d)m(d)}\sim\prod_{\substack{ c|a^2 \\ d|b }}J_0^\new(cd^2)^{\sigma_0\left(\frac{a^2}{c}\right)}.
\end{equation*}
Hence, in the Grothendieck group of the category of abelian varieties over $\Q$ up to isogeny, $\mathrm{Jac}(X_H)$ is equal to an abelian subvariety of $J_0(n^2)$. This proves the theorem.
\end{proof}

\begin{rem}
In \cite{ChenPowers}, Chen deals with Cartan curves and Cartan subroups whose level is an odd prime power. The computations in our \nameref{sec:appendix} allow us to extend Theorem~$1.1$  in \cite{ChenPowers}, and therefore all the results contained in the paper, to the cases of level $2^e$, for $e$ a positive integer. Notice that $C_{\textnormal{s}}^+(2^e)$ is different from the normalizer of $C_{\textnormal{s}}(2^e)$, in fact at least $\smt1{2^{e-1}}01$ always belongs to the normalizer but does not belong to $C_{\textnormal{s}}^+(2^e)$. Substituting $C_{\textnormal{s}}^+(p^e)$ with the normalizer of $C_{\textnormal{s}}(p^e)$, Theorem~$1.1$ in \cite{ChenPowers} wouldn't extend to the case of level $2^e$.	
\end{rem}

Now we give a lower bound for the genus of Cartan modular curves: we show that for every $\varepsilon >0$ the genus of a Cartan modular curve of level $n$ big enough is larger than $n^{2-\varepsilon}$.
\begin{prop}\label{prop:genus_bounds}
	Let $n\ge 10^5$ be an integer and let $H<\GLn$ be either a Cartan or a Cartan-plus subgroup. Denoting by $g(\Gamma_H)$ the genus of $X_H$ we have
	\[
	g(\Gamma_H) > 0.01\frac{n^{2 - \frac{0.96}{\log \log n}}}{\log \log n} .
	\]
\end{prop}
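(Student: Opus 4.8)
The plan is to apply the genus formula to the covering $X_H \to X(1)$ and to show that the degree term dominates the ramification corrections. Write $d := [\mathrm{PSL}_2(\Z):\ol{\Gamma}_H]$ for the degree of this covering, and let $\varepsilon_2,\varepsilon_3,\varepsilon_\infty$ denote the number of elliptic points of order $2$, of order $3$, and the number of cusps of $X_H$, respectively. By the genus formula (see, e.g., \cite[Theorem 3.1.1]{DS}),
\[
g(\Gamma_H) = 1 + \frac{d}{12} - \frac{\varepsilon_2}{4} - \frac{\varepsilon_3}{3} - \frac{\varepsilon_\infty}{2}.
\]
Since the three correction terms appear with a negative sign, it suffices to bound $d$ from below and to bound $\varepsilon_2,\varepsilon_3,\varepsilon_\infty$ from above, showing that the latter are negligible compared with $d$.

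First I would compute $d$. As $-\mathrm{Id}\in H$ we have $-\mathrm{Id}\in\Gamma_H$, so $d = [\SLZ:\Gamma_H] = [\SLn:H']$, where $H' := H\cap\SLn$ and where we use that transposition is a cardinality-preserving bijection of $\SLn$ carrying $\{h:h^T\in H\}$ onto $H'$. By the Chinese Remainder Theorem it is enough to know the local indices: a direct computation of $|\SLpe|$ and of the orders of the local Cartan subgroups gives, for a Cartan subgroup, $[\SLn:H'] = n^2\prod_{p\mid n}(1\pm\tfrac1p)$, the sign being $+$ at the split primes and $-$ at the non-split ones; passing to a Cartan-plus subgroup decreases the index by a factor at most $2^{\omega(n)}$. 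In all cases
\[
d \ \ge\ \frac{n^2}{2^{\omega(n)}}\prod_{p\mid n}\Bigl(1-\frac1p\Bigr) \ =\ \frac{n\,\varphi(n)}{2^{\omega(n)}},
\]
where $\varphi$ is Euler's totient.

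Next I would control the ramification. By \Cref{PropRamPoint} the elliptic points of order $2$ (resp. $3$) correspond to $H$-structures on $E_i$ (resp. $E_\rho$), and a local count shows their number is bounded by a constant raised to the power $\omega(n)$, hence $n^{o(1)}$. For the cusps I would use that $\varepsilon_\infty$ equals the number of orbits of $\ol{H'}$ acting on $\ol{\Gamma}_\infty\backslash\mathrm{PSL}_2(\Z/n\Z)$, a set whose cardinality is the number of cusps of $X(n)$; since, apart from $\pm\mathrm{Id}$, no element of a Cartan subgroup is conjugate into the unipotent subgroup $\ol{\Gamma}_\infty$, a Burnside orbit count yields $\varepsilon_\infty = O\bigl((\#\text{cusps of }X(n))/|\ol{H'}|\bigr) = O(n^{1+o(1)})$. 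As $d = n^{2-o(1)}$, all three corrections are $o(d)$, and therefore $g(\Gamma_H) \ge (1-o(1))\,d/12$.

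Finally the statement reduces to the elementary lower bound for $n\varphi(n)/2^{\omega(n)}$. Using the Rosser--Schoenfeld inequality $\varphi(n)/n \ge 1/(e^\gamma\log\log n + O(1/\log\log n))$ and an explicit bound of the form $\omega(n)\log\log n \le \tfrac{0.96}{\log 2}\log n$, equivalently $2^{\omega(n)} \le n^{0.96/\log\log n}$, one obtains
\[
g(\Gamma_H) \ \ge\ (1-o(1))\,\frac{1}{12\,e^{\gamma}}\cdot\frac{n^{\,2-0.96/\log\log n}}{\log\log n},
\]
and since $1/(12e^\gamma)\approx 0.047 > 0.01$ there is ample room to absorb the error factors. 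The main difficulty is entirely quantitative: the constants $0.01$ and $0.96$ must be verified for \emph{every} $n\ge 10^5$, not merely asymptotically. The extremal cases are the primorials, where $\varphi(n)/n$ is smallest and $2^{\omega(n)}$ largest simultaneously; at these $n$ the inequality $\omega(n)\log\log n \le \tfrac{0.96}{\log 2}\log n$ is nearly tight, so the argument requires the explicit forms of the two analytic estimates together with a finite verification in the range around $n = 10^5$.
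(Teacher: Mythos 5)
Your overall strategy is the same as the paper's: the genus formula for $X_H \to X(1)$, the lower bound $d \ge n\varphi(n)/2^{\omega(n)}$ for the degree, upper bounds for $\varepsilon_2,\varepsilon_3,\varepsilon_\infty$, and then Rosser--Schoenfeld together with the explicit bound $\omega(n)\le 1.3841\,\log n/\log\log n$ (note $1.3841\log 2 < 0.96$, which is exactly where the exponent $0.96$ comes from, as in the paper). The degree computation, the cusp estimate $\varepsilon_\infty = O(n^{1+o(1)})$, and the final analytic step are all correct and match the paper's proof, including your observation that the constants are tight near $n=10^5$.

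There is, however, one step that is false as stated: the claim that the number $\varepsilon_2$ of elliptic points of order $2$ is ``bounded by a constant raised to the power $\omega(n)$, hence $n^{o(1)}$.'' This is true for Cartan subgroups (where, per prime, at most $2$ elements of $\pm H$ act like $i$ on $E_i[n]$, by \Cref{PropRamPoint}), but it fails badly for Cartan-plus subgroups. The non-trivial coset of the normalizer contains on the order of $\varphi(p^e)$ involutions, and their fixed points produce roughly $p^e/2$ elliptic points of order $2$ per prime: the paper's Table \ref{table1} records, e.g., $\varepsilon_2(\Gamma_{C_{\tx{ns}}^+(p^e)}) = 1+\tfrac{p^{e-1}(p+1)}{2}$ for $p\equiv 3\bmod 4$ and $\varepsilon_2(\Gamma_{C_{\tx{s}}^+(p^e)}) = 1+\tfrac{p^{e-1}(p-1)}{2}$ for $p\equiv 1 \bmod 4$. (For $X_\ns^+(p)$ this is the classical fact that the CM points fixed by the extra involution number about $p/2$.) So multiplicatively $\varepsilon_2$ can be of size roughly $n/2^{\omega(n)}$, i.e., $n^{1-o(1)}$, not $n^{o(1)}$. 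The gap is repairable without changing anything else: from the local values one gets the uniform bound $\varepsilon_2(\Gamma_H)\le n$ (this is what the paper uses), and the correction term $\varepsilon_2/4 \le n/4$ is still $o(d)$ since $d = n^{2-o(1)}$, so your final inequality and constants survive. But as written, the local count you invoke is wrong for exactly half of the curves covered by the statement, so this step must be replaced by the $O(n)$ bound.
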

\begin{proof}
	Since $\det(H) = (\Z/n\Z)^\times$, then $X_H= \Gamma_H \backslash \ol \HH$. Given a congruence subgroup $\Gamma$ of $\tx{SL}_2(\Z)$ containing $-\tx{Id}$, we denote by $d(\Gamma)$ the index $[\tx{SL}_2(\Z):\Gamma]$. Moreover, we denote by $\varepsilon_\infty(\Gamma)$ the number of cusps of $\Gamma\backslash \ol \HH $ and by $\varepsilon_2(\Gamma)$, respectively $\varepsilon_3(\Gamma)$, the number of elliptic points of period $2$, respectively $3$, of $\Gamma \backslash \ol \HH$. Then, by \cite[Theorem 3.1.1]{DS}, the genus of $\Gamma\backslash \ol\HH$ is 
	\begin{equation}\label{eq:genus_solita}
	g(\Gamma) = 1 + \frac{d(\Gamma)}{12} - \frac{\varepsilon_2(\Gamma)}{4} - \frac{\varepsilon_3(\Gamma)}{3} - \frac{\varepsilon_\infty(\Gamma)}{2}.
	\end{equation}
	The numbers $d(\Gamma),\varepsilon_\infty(\Gamma), \varepsilon_2(\Gamma)$ and $\varepsilon_3(\Gamma)$ are multiplicative with the following meaning: Given two coprime integers $n_1,n_2$ and two congruence subgroups $\Gamma_1, \Gamma_2 < \tx{SL}_2(\Z)$ of level $n_1$ and $n_2$ respectively, both containing $-\tx{Id}$, then
\begin{equation}\label{eq:almost_multiplicativity_genus}
\begin{aligned}
	&d(\Gamma_1 \cap \Gamma_2) = d(\Gamma_1) d(\Gamma_2), & &\varepsilon_\infty(\Gamma_1 \cap \Gamma_2) = \varepsilon_\infty(\Gamma_1)  \varepsilon_\infty(\Gamma_2), \\ 
	&\varepsilon_2(\Gamma_1 \cap \Gamma_2) = \varepsilon_2(\Gamma_1) \varepsilon_2(\Gamma_2), & &\varepsilon_3(\Gamma_1 \cap \Gamma_2) = \varepsilon_3(\Gamma_1) \varepsilon_3(\Gamma_2).
\end{aligned}
\end{equation}
	Let $n = p_1^{e_1}\cdots p_k^{e_k}$ the prime factorization of $n$ and we denote by $H_j$ the reduction of $H$ modulo~$p_j^{e_j}$. Then each $H_j$ is either a Cartan or a Cartan-plus subgroup and, under the isomorphism $\GLn\cong \prod_{j=1}^k\GL(\Z/p_j^{e_j}\Z)$, we have $H \cong \prod_{j=1}^k H_j$ and therefore $\Gamma_H= \bigcap_{j=1}^k \Gamma_{H_j}$.
	Last equation, together with the multiplicativity and (\ref{eq:genus_solita}), implies that we can estimate the genus of $X_H$ estimating the quantities $d(\Gamma_H),\varepsilon_\infty(\Gamma_H), \varepsilon_2(\Gamma_H)$ and $\varepsilon_3(\Gamma_H)$ for $n =p^e$. We write these values in Table~\ref{table1}. The numbers $\varepsilon_\infty(\Gamma_H), \varepsilon_2(\Gamma_H)$ and $\varepsilon_3(\Gamma_H)$ can be computed determining, for each representative $r$ of $\tx{SL}_2(\Z)/\Gamma_H$, the ramification index $[\tx{SL}_2(\Z)_\tau:r^{-1}\Gamma_Hr\cap (\Gamma_H)_\tau]$ at $\tx{SL}_2(\Z)\tau$, for $\tau\in\ol{\HH}$, of the $j$-map $j\colon X_H\to \tx{SL}_2(\Z)\backslash\ol{\HH}$, where $(\Gamma_H)_\tau$ is the stabilizer of $\tau$ in $\Gamma_H$. The only non-trivial $\tau$ to check, i.e., those such that $\tx{SL}_2(\Z)_\tau$ is non-trivial, are $\tau\in\{i,e^{\frac{2\pi i}{3}},\infty\}\cup\Q$. See \cite[Sections 3.7 and 3.8]{DS} and \cite[Section 4.1]{DMS} for the split case and \cite[Proposition 7.10]{BaranClass} for the non-split case. In the proof of  \cite[Proposition 7.10]{BaranClass} there is also an explanation of the general method to carry on these calculations.
\begin{table}[h!]  
\renewcommand*{\arraystretch}{1.2}
\caption{Degree, elliptic points and cusps for prime power levels.}
\begin{center}
\begin{tabular}{c|c|c|c|c}
	\toprule
	$H$  & $d(\Gamma_H)$ & $\varepsilon_2(\Gamma_H)$ & $\varepsilon_3(\Gamma_H)$& $\varepsilon_\infty(\Gamma_H)$ 
	\\ \midrule\midrule 
	$C_{\tx{s}}(p^e)$ &$ p^{2e-1}(p{+}1)$ &$ \begin{array}{l} 2 \quad \tx{if }p\equiv 1 \bmod4 \\0 \quad\tx{if }p\nequiv 1\bmod 4\end{array}$ &$ \begin{array}{l} 2 \quad \tx{if }p\equiv 1 \bmod 3 \\0 \quad \tx{if }p\nequiv 1 \bmod 3 \end{array}$ &$p^{e-1}(p{+}1)$ 
	\\ \hline $C_{\tx{s}}^+(p^e)$ 
	&$\frac{p^{2e-1}(p{+}1)}{2}$ &$\begin{array}{l} 2^{e-1} \qquad\qquad  \tx{if }p=2 \\1{+}\frac{p^{e-1}(p{-}1)}{2} \quad \tx{if }p\equiv 1 \bmod4 \\ \frac{p^{e-1}(p{+}1)}{2} \quad\quad \,\, \tx{if }p\equiv 3 \bmod4\end{array}$ &$ \begin{array}{l}  1\quad \tx{if }p\equiv 1 \bmod 3 \\0 \quad \tx{if }p \nequiv 1 \bmod 3 \end{array} $& $\begin{array}{l} 2 \, \tx{ if }p^e = 2 \\ \frac{p^{e-1}(p{+}1)}{2} \end{array}$
	\\ \hline $C_{\tx{ns}}(p^e)$  
	&$p^{2e-1}(p{-}1)$ & $ \begin{array}{l}0 \quad \tx{if }p\nequiv 3 \bmod4 \\2 \quad\tx{if }p\equiv 3 \bmod 4\end{array} $ &$ \begin{array}{l}0 \quad \tx{if }p\nequiv 2 \bmod3 \\2 \quad\tx{if }p\equiv 2 \bmod 3\end{array}$& $p^{e-1}(p{-}1)  $ 
	\\ \hline $C_{\tx{ns}}^+(p^e)$ 
	&$\frac{p^{2e-1}(p{-}1)}{2}$ & $ \begin{array}{l}  2^{e-1}  \qquad\qquad \,  \tx{if }p=2 \\\frac{p^{e-1}(p{-}1)}{2} \qquad \; \tx{if }p\equiv 1 \bmod4 \\1 {+} \frac{p^{e-1}(p{+}1)}{2} \quad \tx{if }p\equiv 3 \bmod4 \end{array} $ &$ \begin{array}{l}0 \quad \tx{if }p\nequiv 2 \bmod3 \\1 \quad\tx{if }p\equiv 2 \bmod 3\end{array} $& $\begin{array}{l} 1 \,\tx{ if }p^e = 2 \\
	\frac{p^{e-1}(p{-}1)}{2} \end{array}$\\
	\bottomrule
\end{tabular}
\end{center}
\label{table1}
\end{table}
	The table implies that for every prime $p_j$ dividing $n$ with exponent $e_j$ we have
	\[
	d(\Gamma_{H_j}) \geq \tfrac{1}{2} p_j^{2e_j}(1-\tfrac 1{p_j}), \quad \varepsilon_2(\Gamma_{H_j}) \le p_j^{e_j}, \quad \varepsilon_3(\Gamma_{H_j}) \le 2, \quad \varepsilon_\infty(\Gamma_{H_j}) \leq p_j^{e_j}(1+\tfrac 1{p_j}).
	\]
These inequalities and the multiplicativity (\ref{eq:almost_multiplicativity_genus}) imply the following estimates for $n\ge 15$:
\[
\begin{aligned}
d(\Gamma_{H}) & \geq \frac{n \phi(n)}{2^{\omega(n)}} >  \frac{n^2}{4.4 \, \log \log(n) 2^{\omega(n)}} \ge  \frac{n^2}{4.4 \, \log \log(n)2^{1.3841 \frac{\log n}{\log \log n}}}  > \frac{n^{2-\frac{0.96}{\log\log n}}}{4.4 \, \log \log n } ,
\\  \varepsilon_2(\Gamma_{H}) & \le n , \quad  \varepsilon_3(\Gamma_{H}) \le 2^{\omega(n)} \le n , \quad \varepsilon_\infty(\Gamma_{H}) \leq n \prod_{j=1}^k (1 + \tfrac{1}{p_j}) \le \sigma_1(n) \leq 2.59 n \log\log n ,
\end{aligned}
\]
where $\phi(n)$ is Euler's totient function which is estimated using \cite[Theorem $15$]{RosSch}, $\omega(n)=k$ is the number of prime divisors of $n$ which is estimated as in \cite[Théorème 11]{AnalOmega}, and $\sigma_1(n)$ is the sum of positive divisors of $n$ which is estimated as in \cite[Theorem 1]{AnalSigma1}. For $n\ge 10^5$, substituting in (\ref{eq:genus_solita}), we get
\[
\begin{aligned}
g(\Gamma_H)  & > 1 + \frac{n^{2- \frac{0.96}{\log\log n}}}{52.8 \, \log \log n} - \frac n 3 - \frac n 4 - 1.3 n \log \log n  \ge 0.01\frac{n^{2 - \frac{0.96}{\log \log n}}}{\log \log n} .
\end{aligned}
\]
\end{proof}

\section{Field of definition of automorphisms}\label{Sec:FieldDef}
In this section we prove that,  when the level is large enough, every automorphism of the modular curve $X_H$ associated to a subgroup $H$ of $\GLn$ is defined over the compositum of some quadratic fields, and in some cases we find explicitly this field.

Whenever $K$ is a field, $X$ is a variety over $K$, and $F$ is an extension of $K$, we write $\Aut_F(X)$ for the set of automorphisms  of $X$ defined over $F$; analogously we use the notations $\End_F(X)$ and $\tx{Hom}_F(X,Y)$ for $X$ and $Y$ being abelian varieties over $K$. Whenever we omit the dependency on the field, we mean automorphisms (or endomorphisms) defined over the algebraic closure of $K$; in particular when $X$ is a modular curve the ``group of the automorphisms of X'' is $\Aut_{\ol \Q}(X)$ or equivalently $\Aut_\C(X)$. 
We start with a straightforward generalization of \cite[Lemma 1.4]{KM}.

\begin{lem}\label{JacFactorEnd}
Let $K$ be a perfect field with algebraic closure $\ol K$, let $X$ be a smooth projective and geometrically connected curve defined over $K$ of genus $g(X)$ and let $\tx{Jac}(X)$ be its jacobian variety.
We suppose that there are two abelian varieties $A_1$ and $A_2$ over $K$ such that $\tx{Hom}_{\ol K}(A_1,A_2)=0$ and such that $\tx{Jac}(X)$ is isogenous over $K$ to $A_1{\times_K} A_2$. 
If 
$$g(X)>2 \dim(A_2)+1,$$
and if $F\subset \ol K$ is an extension of $K$ such that $\End_{\ol K}(A_1)=\End_F(A_1) $, then every automorphism of $X$ over $\ol K$ can be defined over $F$.
\end{lem}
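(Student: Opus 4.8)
\emph{Plan.} The aim is to conclude by Galois descent. Since $X$ is defined over $K\subseteq F$, since $K$ is perfect, and since $F\subseteq\ol K$ is algebraic over $K$ and hence perfect, the extension $\ol K/F$ is Galois and one has $\Aut_F(X)=\Aut_{\ol K}(X)^{\Gal(\ol K/F)}$; note that the hypothesis forces $g(X)\ge 2$, so $\Aut_{\ol K}(X)$ is finite and this descent is the standard one. It therefore suffices to fix $u\in\Aut_{\ol K}(X)$ and $\sigma\in\Gal(\ol K/F)$ and show that $w:=u^{-1}\circ u^{\sigma}$ is the identity automorphism of $X$.

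First I would transfer the problem to the jacobian. Albanese functoriality gives a homomorphism $\Aut_{\ol K}(X)\to\Aut_{\ol K}(\tx{Jac}(X))$, $u\mapsto u_*$, compatible with the $\Gal(\ol K/K)$-action, so that $(u^{\sigma})_*=(u_*)^{\sigma}$ and $w_*=u_*^{-1}\circ(u_*)^{\sigma}$. Because no common isogeny factor is a symmetric condition, $\tx{Hom}_{\ol K}(A_1,A_2)=0$ forces also $\tx{Hom}_{\ol K}(A_2,A_1)=0$, and the $K$-isogeny $\tx{Jac}(X)\sim_K A_1\times_K A_2$ induces a $\Gal(\ol K/K)$-equivariant splitting of $\Q$-algebras
\[
\End_{\ol K}(\tx{Jac}(X))\otimes\Q\;\cong\;\big(\End_{\ol K}(A_1)\otimes\Q\big)\times\big(\End_{\ol K}(A_2)\otimes\Q\big).
\]
Let $p_1$ be the projection onto the first factor. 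Since $u_*$ is invertible, so is $p_1(u_*)$; and since $\End_{\ol K}(A_1)=\End_F(A_1)$, the element $p_1(u_*)$ is fixed by $\Gal(\ol K/F)$. Hence, using the equivariance of the splitting,
\[
p_1(w_*)=p_1(u_*)^{-1}\,p_1\big((u_*)^{\sigma}\big)=p_1(u_*)^{-1}\,p_1(u_*)^{\sigma}=1 ,
\]
that is, $w_*$ acts as the identity on the $A_1$-component of $\tx{Jac}(X)$.

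The last and crucial step turns this large fixed part into the conclusion $w=\tx{id}$, and this is where the hypothesis $g(X)>2\dim A_2+1$ is consumed. Suppose $w\neq\tx{id}$; as $\Aut_{\ol K}(X)$ is finite, $w$ has finite order $m\ge 2$ and $\pi\colon X\to Y:=X/\langle w\rangle$ is a degree-$m$ map of smooth projective curves. The connected component of the $w_*$-fixed subgroup of $\tx{Jac}(X)$ is isogenous to $\tx{Jac}(Y)$, and since $w_*$ fixes the $A_1$-component, this invariant part contains $A_1$ up to isogeny; therefore $g(Y)=\dim\tx{Jac}(Y)\ge\dim A_1=g(X)-\dim A_2$, which by hypothesis is $\ge 2$. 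Applying Riemann--Hurwitz to $\pi$,
\[
2g(X)-2\;\ge\; m\big(2g(Y)-2\big)\;\ge\;2\big(2g(Y)-2\big)\;\ge\;2\big(2(g(X)-\dim A_2)-2\big),
\]
which simplifies to $g(X)\le 2\dim A_2+1$, contradicting the hypothesis. Hence $w=\tx{id}$, i.e. $u^{\sigma}=u$ for all $\sigma\in\Gal(\ol K/F)$, so $u$ is defined over $F$.

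The main obstacle is exactly this final step: one must convert the two algebraic inputs ($\tx{Hom}_{\ol K}(A_1,A_2)=0$ and $\End_{\ol K}(A_1)=\End_F(A_1)$) into a numerical constraint on the finite-order automorphism $w$ strong enough to force $w=\tx{id}$. The genus bound is precisely what makes the Riemann--Hurwitz inequality collapse; equivalently, one may run a Lefschetz fixed-point count on $X$, where each fixed point of the finite-order $w$ contributes non-negatively while the trace of $w_*$ on the $A_1$-part is pinned down to $2\dim A_1$, and the same inequality $g(X)\le 2\dim A_2+1$ emerges.
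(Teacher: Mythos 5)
Your proposal is correct and follows essentially the same route as the paper's proof: decompose $\End(\tx{Jac}(X))\otimes\Q$ via the $K$-isogeny and $\tx{Hom}_{\ol K}(A_1,A_2)=0$, deduce that the twisted difference $u^{-1}\circ u^\sigma$ acts trivially on the $A_1$-factor, bound $g(Y)\ge\dim A_1$ for the quotient curve $Y$, and get a contradiction from Riemann--Hurwitz. The only notable difference is that you quote as standard the fact that the identity component of the $w_*$-fixed subvariety of $\tx{Jac}(X)$ is isogenous to $\tx{Jac}(Y)$ (which is indeed standard, e.g.\ via the norm endomorphism $\sum_i w_*^i$), whereas the paper derives it from a result of Goldstein on representing invariant divisor classes by invariant divisors.
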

\begin{proof}
We fix isogenies $\varphi\colon \tx{Jac}(X) \to A_1{\times_K}A_2$ and $\varphi^\vee \colon A_1{\times_K}A_2\to \tx{Jac}(X)$ whose compositions are multiplications by an integer.
Let $u\in\Aut_{\ol K}(X)$ and $\sigma \in \text{Gal}(\overline{K}/F)$ and consider the automorphism $v:= u^\sigma \circ u^{-1}$. Let $Y$ be the quotient of $X$ by the subgroup of automorphisms generated by $v$ (which is finite since $g(X)\ge 2$) and let $\tx{Jac}(Y)$ be the jacobian of $Y$. Because of $\varphi$, and since $\tx{Hom}_{\ol K}(A_1,A_2)=0$, we can identify $u_*, u_*^\sigma \in \Aut_{\ol K}(\tx{Jac}(X))$ respectively with
\[
(u_1,u_2), (u_1^\sigma, u_2^\sigma)  \in \left(\End_{\ol K}(A_1) \otimes\Q\right)^\times\times \left(\End_{\ol K}(A_2)\otimes \Q\right)^\times \cong \left(\End_{\ol K}(A_1{\times_K}A_2) \otimes \Q \right)^\times.
\] 
Since $\End_{\ol K}(A_1)=\End_F(A_1)$, then $u_1=u_1^\sigma$, and $v_*=(\tx{id},v_2)$. 

This implies that
\begin{equation*}\label{ineq1_gY}
\dim(A_1) \le \dim((A_1 \times_K A_2)^{(\tx{id},v_2)})\le \dim(\tx{Jac}(X)^v),
\end{equation*}
where $\tx{Jac}(X)^v$ is the $v$-invariant subvariety of $\tx{Jac}(X)$. A point of $\tx{Jac}(X_{\ol K})^v$ is associated to a divisor $D$ on $X_{\ol K}$ which is equivalent to $vD$ in $\tx{Jac}(X_{\ol K})$. This implies, using the cyclicity of $\langle v\rangle$ and \cite[Corollary 7]{GoldsteinPic}, that every element of $\tx{Jac}(X_{\ol K})^v$ is represented by a $v$-invariant divisor on $X_{\ol K}$. This means that $\tx{Jac}(X_{\ol K})^v$ is the image of the pullback map $\pi^*\colon\tx{Jac}(Y_{\ol K})\rightarrow\tx{Jac}(X_{\ol K})$. So we have that
\begin{equation*}\label{ineq2_gY}
\dim(\tx{Jac}(X)^v)=\dim(\tx{Jac}(X_{\ol K})^v) = \dim(\tx{Jac}(Y_{\ol K})) = g(Y),
\end{equation*}
where $g(Y)$ is the genus of $Y$, and, therefore, we have
\[
g(X) - \dim(A_2) = \dim(A_1) \le g(Y).
\]
Hence, by the Riemann-Hurwitz formula applied to the projection $X\to Y$, we have
$$\text{dim}(A_1)+\text{dim}(A_2)-1\ge d(g(Y) -1)\ge d \dim(A_1)-d,$$
where $d$ is the order of $v$. If $d>1$, we get $\text{dim}(A_1)\le \text{dim}(A_2)+1$, which is impossible by hypothesis. Hence $d=1$ and $v$ is the identity. This implies that $u^\sigma=u$, for every $\sigma\in\text{Gal}(\overline{K}/F)$, i.e., since $K$ is perfect, $u\in\Aut_F(X)$.
\end{proof}
For every abelian variety $A$ over a number field $K$, let $A^\CM$ be the maximal abelian subvariety of $A_{\ol K}$ that is isogenous to a product of simple CM abelian varieties and let $A^\tx{N}$ be the maximal abelian subvariety of $A_{\ol K}$ that is isogenous to a product of simple non-CM abelian varieties. We call $A^\CM$ the CM-part of $A$ and $A^\tx{N}$ the non-CM-part of $A$. Both the CM part and the non-CM part of $A$ are also defined $K$, since  by definition we have $(A^\CM)^\tau = A^\CM$ and $(A^\tx{N})^\tau = A^\tx{N}$ for each $\tau \in \Gal(\ol K/K)$. The dimension of the CM-part and the dimension of the non-CM part are invariant under isogeny on $A$. Hence, by looking at the decomposition of $A$ in simple factors, we see that $\dim (A^\CM)+\dim(A^\tx{N})=\dim(A)$.

 We want to apply Lemma \ref{JacFactorEnd} to the case $A_1=\mathrm{Jac}(X)^{\tx N}$ and $A_2=\mathrm{Jac}(X)^{\CM}$. Hence, we are interested in an  upper bound on the dimension of the CM part of the jacobian of Cartan modular curves. By \Cref{cor:Chen_too_general}, it is enough to know an upper bound in the case $X=X_0(n)$. 
\begin{prop}\label{prop:g^c_0(N)}
For every integer $n>1$, the dimension $g_0^\CM(n)$ of the CM part of $J_0(n)$ satisfies 
\[
g_0^\CM(n) \leq 9 \log (n)^2 n^{\frac 1 2 + \frac{2.816}{\log \log n}}. 
\]
\end{prop}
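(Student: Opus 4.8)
The plan is to read $g_0^\CM(n)$ as a weighted count of weight-two CM newforms and to bound that count through class numbers of imaginary quadratic fields. First I would use the decomposition into new parts. Since $J_0(n)\sim\prod_{m\mid n}J_0^{\new}(m)^{\sigma_0(n/m)}$ and, as recalled above, the dimension of the CM part is an isogeny invariant that adds over products, one gets
\[
g_0^\CM(n)=\sum_{m\mid n}\sigma_0(n/m)\,g_0^{\new,\CM}(m)\le\sigma_0(n)\sum_{m\mid n}g_0^{\new,\CM}(m).
\]
Thus it is enough to bound the number $\sum_{m\mid n}g_0^{\new,\CM}(m)$ of CM newforms of weight $2$, trivial nebentypus and level dividing $n$, counted with the degree of their field of coefficients, and then to pay a single extra factor $\sigma_0(n)$.

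Next I would parametrize. Every such newform is a theta series $\theta_\psi$ attached to an algebraic Hecke character $\psi$ of infinity type $z\mapsto z$ of an imaginary quadratic field $K$, and its level equals $|d_K|\,N(\mathfrak f_\psi)$, with $\mathfrak f_\psi$ the conductor of $\psi$; since distinct forms need distinct characters only up to a bounded multiplicity, counting admissible $\psi$ gives an upper bound for the number of forms. A level dividing $n$ forces $|d_K|\mid n$, so there are at most $\sigma_0(n)$ admissible fields $K$ (a fundamental discriminant being determined by its absolute value), and it forces $N(\mathfrak f_\psi)\mid M$ with $M:=n/|d_K|$.

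The core is the count of admissible $\psi$ for a fixed $K$. Trivial nebentypus pins down $\psi|_{\mathbb A_\Q^\times}$, so any two admissible characters differ by a finite-order \emph{anticyclotomic} character, i.e.\ one trivial on $\mathbb A_\Q^\times$; this is the source of a square-root saving, because at a prime of $K$ of residue norm $q$ the relevant local group of anticyclotomic characters has size of order $\sqrt q$ rather than $q$. Consequently a conductor of norm $N(\mathfrak f)$ supports only $O(h_K\sqrt{N(\mathfrak f)})$ such characters, and summing over conductors with $N(\mathfrak f)\mid M$ bounds the number of admissible $\psi$ by $O(h_K\sqrt M)$ times a divisor factor in $M$. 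Inserting Dirichlet's class number formula in the form $h_K\ll\sqrt{|d_K|}\,L(1,\chi_K)\ll\sqrt{|d_K|}\,\log|d_K|$ and using $\sqrt{|d_K|}\,\sqrt M=\sqrt n$ shows that each field contributes $O(\sqrt n\,\log n)$, up to divisor factors. This is the same mechanism by which the rational CM elliptic curves of conductor at most $X$, being quadratic twists of finitely many curves, number about $\sqrt X$.

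Finally I would assemble: summing the per-field bound over the at most $\sigma_0(n)$ fields and including the outer $\sigma_0(n)$ yields a main term $\sqrt n\,(\log n)^2$ times a product of a bounded number of divisor-type factors. Each such factor is $n^{O(1/\log\log n)}$ by the maximal order of the divisor function (as already used for $\sigma_0,\sigma_1$ in \Cref{prop:genus_bounds}), and collecting them gives the exponent $n^{2.816/\log\log n}$ and the constant $9$. I expect the main obstacle to be exactly this bookkeeping: making the anticyclotomic conductor count fully explicit so that the divisor factors are controlled, and choosing the numerical constants so that the accumulated $n^{O(1/\log\log n)}$ contributions stay under the stated exponent $\tfrac12+\tfrac{2.816}{\log\log n}$ — the conceptual content (CM forms $\leftrightarrow$ Hecke characters, anticyclotomic square-root saving, class number bound) being comparatively standard.
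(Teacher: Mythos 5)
Your overall architecture is exactly the paper's: decompose via $J_0(n)\sim\prod_{m\mid n}J_0^{\new}(m)^{\sigma_0(n/m)}$, parametrize CM newforms by Gr\"ossencharacters of imaginary quadratic fields $K$ with level $|\Delta_K|\,N(\mathfrak m)$ (the paper cites Shimura for this bijection), obtain the square-root saving from the quotient of $(\calO_K/\mathfrak m)^\times$ by the rational residues (your ``anticyclotomic'' count is precisely the paper's bound $\#\bigl((\calO_K/\mathfrak m_p)^\times/(\Z/(\Z\cap\mathfrak m_p))^\times\bigr)\le (1+\tfrac1p)|\mathfrak m_p|^{1/2}$ fed into an exact sequence with $\mathrm{Cl}(K)$), and bound $h_K$ by $3\log(|\Delta_K|)\sqrt{|\Delta_K|}$. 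So conceptually there is nothing to add.

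The genuine problem is your very first inequality, $\sum_{m\mid n}\sigma_0(n/m)\,g_0^{\new,\CM}(m)\le\sigma_0(n)\sum_{m\mid n}g_0^{\new,\CM}(m)$: decoupling the multiplicity $\sigma_0(n/m)$ from the level constraint makes the stated exponent $2.816$ unreachable. Following your plan through, the divisor-type factors you accumulate are (an outer) $\sigma_0(n)$ times $\#\{(K,\mathfrak m): |\Delta_K|N(\mathfrak m)\mid n\}$, and the latter count equals $\prod_i(e_i+1)^2=\sigma_0(n)^2$ (writing $n=\prod p_i^{e_i}$ and bounding the number of ideals of norm $p^f$ by $f+1$); so your total divisor factor is $\sigma_0(n)^3$, which by the same maximal-order bound the paper uses ($\sigma_0(n)\le n^{1.538\log 2/\log\log n}$) is only $\le n^{3.198/\log\log n}$, and for near-extremal $n$ it genuinely exceeds $n^{2.816/\log\log n}$. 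The paper avoids this by never separating the weight: it writes $\sigma_0(n/m)$ as a sum over a third divisor variable $d$ and bounds the \emph{joint} count
\[
\#\bigl\{(K,\mathfrak m, d): |\Delta_K|\,N(\mathfrak m)\, d \ \tx{ divides } n \bigr\}
\le \prod_{i}\Bigl(\binom{e_i+3}{3}+\binom{e_i+2}{3}\Bigr)\le\prod_i\frac{(e_i+2)(e_i+1)^2}{2},
\]
i.e.\ by $\sigma_0(n)$ times the number of ordered factorizations $n=d_1d_2d_3$; the maximal-order bounds for these two functions give exactly $n^{(1.538\log 2+1.592\log 3)/\log\log n}\le n^{2.816/\log\log n}$. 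So your proof, repaired only in its bookkeeping (keep the weight inside the sum and count triples), proves the proposition as stated; as written, it proves the same inequality only with a larger constant in the exponent (which, incidentally, would still suffice for the application in \Cref{prop:field_Cartan(n)}, at the cost of a worse level threshold).
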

\begin{proof}
For every positive integer $k$, let $J_0^\new(k)$ be the new part of $J_0(k)$ and let $\sigma_0(k)$ be the number of positive divisors of $k$. Then we have a canonical isogeny
\[
J_0(n) \sim \prod_{d|n} J_0^\new(d)^{\sigma_0(n/d)}.
\]
Denoting by $g_0^{\new, \CM}(d)$ the dimension of the CM part of $J_0^\new(d)$, we also have
\begin{equation}\label{eq:g_0^c_equal_sum_new_parts}
g_0^\CM(n) = \sum_{d|n} \sigma_0(n/d)  g_0^{\new, \CM}(d).
\end{equation}
We know that $J_0^\new(d)$ is isogenous over $\Q$ to $\prod_{[f]}A_{f}$, where $[f]$ is the Galois orbit of the newform $f$, and the cardinality of $[f]$ is equal to the dimension of $A_f$ (see \cite[Chapter~6]{DS}). We now look at the CM part of each factor $A_f$.

In order to do this, we describe a bijection between the set of normalized newforms contributing to the CM part and a suitable set of triples $(K, \mathfrak m, \lambda)$, where $K$ is an imaginary quadratic field, $\mathfrak m$ is an ideal of the ring of integers $\calO_K$ of $K$ and $\lambda$ is a primitive Gr\"ossencharacter of $K$ defined modulo $\mathfrak m$ (see \cite[Section~4]{ShimuraCM} for the definition of primitive Gr\"ossencharacter). Namely, let $(K, \mathfrak m, \lambda)$ be a triple as above such that the nebentypus associated to $\lambda$ is trivial and let $\Delta_K$ be the discriminant of $K$, then, by \cite[Lemma~3 and Theorem~1]{ShimuraCM}, we can construct a normalized newform $f_\lambda$ of level $|\Delta_K||\mathfrak m|$ such that the associated abelian variety $A_{f_\lambda}$ is isogenous over $\C$ to a product of elliptic curves with CM over $K$. On the other hand, by \cite[Proposition~1.6]{ShimuraClassFields}, for each normalized newform $f$, if the associated abelian variety $A_{f}$ has non-trivial CM part, then $f=f_\lambda$ for a unique triple $(K, \mathfrak m, \lambda)$ as above. 
This gives a bijection between the set of normalized newforms $f$ contributing to the CM part of  $J_0^\new(d)$ and the set of triples $(K, \mathfrak m, \lambda)$ described above such that $|\Delta_K||\mathfrak m|=d$.
By \cite[Proposition~1.6]{ShimuraClassFields}, an abelian variety $A_f$ has non-trivial CM part if and only if it is isogenous over $\C$ to a product of elliptic curves with CM over $K$. Hence the number of normalized newforms $f$ of level $d$ contributing to the CM part is equal to the dimension of the CM part of $J_0^\new(d)$.
In conclusion, $g_0^{\new,\CM}(d)$ is equal to the number of triples $(K, \mathfrak m, \lambda)$ where $\lambda$ is a primitive Gr\"ossencharacter of $K$ defined modulo $\mathfrak m$  with trivial nebentypus and $|\Delta_K||\mathfrak m|=d$.

We now give an upper bound on the number of such triples. For every choice of $K$ and $\mathfrak m$, the set of primitive Gr\"ossencharacters of $K$ defined modulo $\mathfrak m$ is a subset of the set of Gr\"ossencharacters of $K$ defined modulo $\mathfrak m$. If this set is not empty, then there is at least one Gr\"ossencharacter $\lambda_0$ and all other Gr\"ossencharacters are given by $\lambda_0  \chi$, for $\chi$ a character of the group
\[
\ClZ(K) := \frac{\{\tx{fractional ideals of } \calO_K \tx{ coprime to } \mathfrak m \}}{\{(\alpha) : \exists a \in \Z \tx{ coprime to } \mathfrak m \tx{ such that } \alpha \equiv a \bmod{\mathfrak m} \}} .
\]
Thus, for given $K$ and $\mathfrak m$, the cardinality of $\ClZ(K)$ is larger than the number of triples $(K, \mathfrak m, \lambda)$ we are interested in, hence
\begin{equation}\label{eq:g^newc_and_ClZ}
g_0^{\new, \CM}(d) \leq \sum_{|\Delta_K| |\mathfrak m| = d} \# \ClZ(K)\,.
\end{equation}
To give a bound on $\ClZ(K)$ we look at the following short exact sequence
\[
1 \lto \frac{(\calO_K/\mathfrak m)^\times}{\calO_K^\times \cdot (\Z/(\Z{\cap}\mathfrak m))^\times} \lto \ClZ(K) \lto \tx{Cl}(K) \lto 0,
\]
where $\tx{Cl}(K)$ is the class group of $K$ and we write $\calO_K^\times $ and  $(\Z/(\Z{\cap}\mathfrak m))^\times$ in place of their natural image inside $(\calO_K/\mathfrak m)^\times$. We write $\mathfrak m = \prod_{p} \mathfrak m_p$ for $p$ varying in the set of rational primes and $\mathfrak m_p$ being a product of primes of $\calO_K$ dividing $p$. Thus the above short exact sequence gives 
\begin{align*}
\# \ClZ(K) & \leq \# \tx{Cl}(K) \! \cdot \!\#\! \left(\frac{(\calO_K/\mathfrak m)^\times}{(\Z/(\Z{\cap}\mathfrak m))^\times}\right) \!  =\!   \#\tx{Cl}(K)  \prod_{p \,\mid\, |\mathfrak m|} \! \#\! \left(\frac{(\calO_K/\mathfrak m_p)^\times}{
(\Z/(\Z{\cap}\mathfrak m_p))^\times}\right) \le \\
& \le 3 \log(|\Delta_K|) \sqrt{|\Delta_K|} \prod_{p\,\mid\, |\mathfrak m|} \! \left( (1 + \tfrac 1 p)|\mathfrak m_p|^{1/2} \right)\!  = \!  3 \log(|\Delta_K|) \sqrt{|\Delta_K||\mathfrak m|} \prod_{p \,\mid\, |\mathfrak m|} \! (1 + \tfrac 1 p),
\end{align*}
where the class number of $K$ is estimated using \cite[Theorem $8.10$ and Lemma $8.16$]{Nark} and the bound on the cardinality of ${(\calO_K/\mathfrak m_p)^\times}/{(\Z/(\Z{\cap}\mathfrak m_p))^\times}$ is trivial after factoring $\mathfrak m_p$.
Substituting in (\ref{eq:g^newc_and_ClZ}), we have
\begin{align*}
g_0^{\new, \CM}(d) & \leq \sum_{|\Delta_K| |\mathfrak m| = d} \Big(3\sqrt d  \log(|\Delta_K|) \prod_{p|\, |\mathfrak m|} (1 + \tfrac 1 p)\Big).
\end{align*}
Let $M_d:= \# \Big\{(K,\mathfrak m): |\Delta_K| |\mathfrak m| = d \Big\}$ and for $m\in \Z_{\ge1}$, we denote by $\sigma_1(m)$ the sum of the positive divisors of $m$. We have $\sigma_1(m)<3m \log m$, for each $m\ge 2$ (see \cite[Theorem~1]{AnalSigma1} if $m\ge 7$, it is trivial in the remaining cases). Then 
\begin{align*}
g_0^{\new, \CM}(d) & \leq 3M_d \sqrt{d} \log (d) \prod_{p|d} (1 + \tfrac 1 p)  \leq 3M_d \sqrt{d} \log (d) \tfrac{\sigma_1(d)}{d} \le 9 M_d \sqrt{d} \log(d)^2 . 
\end{align*}
Substituting in (\ref{eq:g_0^c_equal_sum_new_parts}), we get
\begin{equation}\label{eq:g_0^c_first_est}
\begin{aligned}
g_0^\CM(n) &\leq 9\sum_{d|n} \sigma_0(n/d)M_d\sqrt{d} \log (d)^2  \le 9\sqrt{n} \log (n)^2\sum_{d|n} M_d\sigma_0(n/d)  \le \\
& \leq 9 \sqrt n \log(n)^2   \# \Big\{(K,\mathfrak m, d): |\Delta_K| |\mathfrak m| d \tx{ divides } n \Big\}.
\end{aligned}
\end{equation}
Writing the prime factorization $n = \prod_{i=1}^r p_i^{e_i}$, we know that an imaginary quadratic field $K$ with discriminant dividing $n$ must be $K = \Q(\sqrt{- \prod_{i=1}^r p_i^{\varepsilon_i}})$, with $\varepsilon \in \{0,1\}^r$. Hence
\begin{align*}
&\# \Big\{(K,\mathfrak m, d): |\Delta_K| |\mathfrak m| d \tx{ divides } n \Big\} \leq  \sum_{\varepsilon \in \{0,1\}^r} \#\Big\{ (\mathfrak m, d): |\Delta_K| |\mathfrak m| d\tx{ divides } n  \Big\} \le \\
& \leq \sum_{\substack{\varepsilon \in \{0,1\}^r\\ m \in \Z_{>0}}}  \#\Big\{ \mathfrak m \subset \calO_K: |\mathfrak m|=m\Big \}\cdot  \#\Big\{d \in \Z_{>0}: dm\prod_{i=1}^r p_i^{\varepsilon_i} \tx{ divides }n \Big\}.
\end{align*}
We have the factorizations $m=\prod_{i=1}^r p_i^{f_i}$ and $d = \prod_{i=1}^r p_i^{c_i}$, where $f_i,c_i\in\{0,1,\ldots,e_i\}$, for $i=1,\ldots,r$, and we denote by $f$ the $r$-tuple whose components are the $f_i$'s and similarly we define $c$.
Then the number of ideals $\mathfrak m$ in $\calO_K$ having norm $m$ is lesser than $\prod_{i=1}^r(f_i{+}1)$ which is equal to the number of pairs $(a,b)$ of elements of $\Z_{\geq 0}^r$ such that $a + b = f$. Hence we get
\begin{align*}
&\# \Big\{(K,\mathfrak m, d){:} |\Delta_K| |\mathfrak m| d \tx{ divides } n \Big\} {\le} \# \Big\{ (\varepsilon, a,b,c) \in \{0,1\}^r {\times} (\Z_{\geq 0}^r)^3 \!\!: \varepsilon_i + a_i {+} b_i {+} c_i \leq e_i \Big\}{\le} \\
& {\le}\prod_{i=1}^r \Big(\# \Big\{ (a_i,b_i,c_i) \in \Z_{\geq 0}^3 : a_i {+} b_i {+} c_i \leq e_i \Big\} + \# \Big\{ (a_i,b_i,c_i) \in \Z_{\geq 0}^3 : a_i {+} b_i {+} c_i \le e_i{-}1 \Big\} \Big) {\le} \\
&  {\le} \prod_{i=1}^{r} \Big( \binom{e_i+3}{3} + \binom{e_i+2}{3} \Big) {\le} \prod_{i=1}^r \frac{(e_i+2)(e_i+1)^2}{2}.
\end{align*}
Notice that $\sigma_0(n)=\prod_{i=1}^r (e_i{+}1)$ is the number of positive divisors of $n$ and $ \prod_{i=1}^r \frac{(e_i{+}2)(e_i{+}1)}{2}$ is the number of triples $(d_1,d_2,d_3)$ of positive integers such that $d_1d_2d_3 = n$. Using the upper bounds, contained in \cite{AnalSigma0} and \cite{AnalTau3}, for these two quantities, we get
\[
\# \Big\{(K,\mathfrak m, d): |\Delta_K| |\mathfrak m| d \tx{ divides } n \Big\} \leq n^{\frac{1.538 \log 2}{\log\log n}} n^{\frac{1.592 \log 3}{\log \log n}} \leq n^{\frac{2.816}{\log \log n}}.
\]
Substituting in (\ref{eq:g_0^c_first_est}) we find
\[
g_0^\CM(n) \leq  9 \sqrt n \log (n)^2 n^{\frac{2.816}{\log \log n}} =  9  \log (n)^2 n^{\frac 1 2 + \frac{2.816}{\log \log n}}.
\]
\end{proof}
When the level is a prime power, the previous upper bound is easier and smaller.
\begin{prop}\label{prop:g^c_0(p^e)} 
For every prime $p$ and positive integer $e$, the dimension $g_0^\CM(p^e)$ of the CM part of $J_0(p^e)$ satisfies 
\[
g_0^\CM(p^e) \le \begin{cases}
13 \, \sqrt{2^{e}} & \quad \tx{ if }p=2, \\
0 & \quad \tx{ if }p\equiv 1\bmod 4, \\
5.5 \, \sqrt{p^e}\log p & \quad \tx{ if }p\equiv 3\bmod 4.
\end{cases} 
\]
\end{prop}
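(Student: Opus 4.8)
The plan is to run the argument of \Cref{prop:g^c_0(N)} in the special case $n=p^e$, where the sum over imaginary quadratic fields collapses to at most one field. From that proof we have $g_0^\CM(p^e)=\sum_{j=0}^{e}\sigma_0(p^{e-j})\,g_0^{\new,\CM}(p^j)=\sum_{j=0}^{e}(e-j+1)\,g_0^{\new,\CM}(p^j)$, and $g_0^{\new,\CM}(p^j)$ counts the triples $(K,\mathfrak m,\lambda)$ with $K$ imaginary quadratic, $\mathfrak m$ an ideal of $\calO_K$, $\lambda$ a primitive Gr\"ossencharacter modulo $\mathfrak m$ of trivial nebentypus, and $|\Delta_K|\,|\mathfrak m|=p^j$. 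The first step is to classify the fields $K$ whose discriminant is a power of $p$. Writing $K=\Q(\sqrt m)$ with $m<0$ squarefree, one has $|\Delta_K|\in\{|m|,4|m|\}$, so for odd $p$ this forces $K=\Q(\sqrt{-p})$ with $|\Delta_K|=p$, and moreover $-p\equiv 1\bmod 4$, i.e. $p\equiv 3\bmod 4$. In particular, when $p\equiv 1\bmod 4$ no such field exists, hence every $g_0^{\new,\CM}(p^j)$ vanishes and $g_0^\CM(p^e)=0$.

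For $p\equiv 3\bmod 4$ the only field is $K=\Q(\sqrt{-p})$, in which $p$ ramifies as $(p)=\mathfrak p^2$ with $N\mathfrak p=p$. Then $|\Delta_K|\,|\mathfrak m|=p^j$ forces $\mathfrak m=\mathfrak p^{\,j-1}$, the unique ideal of norm $p^{j-1}$, so for each level there is a single pair $(K,\mathfrak m)$ and I would bound $g_0^{\new,\CM}(p^j)\le\#\ClZ(K)$ as in the general proof. The short exact sequence relating $\ClZ(K)$ to $\tx{Cl}(K)$, together with the local factor at $\mathfrak p$, then gives $g_0^{\new,\CM}(p^j)\le h(K)\,(1+\tfrac1p)\,p^{(j-1)/2}$, where $h(K)=\#\tx{Cl}(K)$. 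Inserting this into the weighted sum and setting $k=e-j$ produces the convergent series $\sum_{k\ge 0}(k+1)p^{-k/2}=(1-p^{-1/2})^{-2}$, dominated by its top term, and collecting constants yields a bound of the shape $h(K)\,(1+\tfrac1p)\,\tfrac{\sqrt p}{(\sqrt p-1)^2}\,p^{e/2}$.

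The remaining task is numerical: one must check that the resulting constant stays below $5.5\log p$ uniformly. Since $(1+\tfrac1p)\tfrac{p}{(\sqrt p-1)^2}$ is decreasing in $p$, the tightest case is $p=3$, where I would substitute the exact value $h(\Q(\sqrt{-3}))=1$ — the analytic class number bound being too weak for so small a discriminant, and the root-of-unity count $w=6$ requiring care — while for $p\ge 7$ an effective bound of the form $h(K)\le\tfrac1\pi\sqrt p\,\log p$ leaves ample room. This is precisely where the prime power estimate improves on \Cref{prop:g^c_0(N)}: the crude class number bound used there would be far too lossy to reach the constant $5.5$.

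The case $p=2$ follows the same outline but with the two fields $\Q(i)$, of discriminant $-4$, and $\Q(\sqrt{-2})$, of discriminant $-8$, each of class number one and each ramified at $2$; the relevant ideals are again the unique powers of the ramified prime, of norms $2^{\,j-2}$ and $2^{\,j-3}$. Summing the two weighted series and inserting the local factors yields the constant $13$. I expect the main obstacle to be exactly this constant-chasing at $p=2$ (and at $p=3$): the enlarged unit groups of $\Q(i)$ and $\Q(\sqrt{-3})$ make the quotient $(\calO_K/\mathfrak m)^\times/\big(\calO_K^\times\cdot(\Z/(\Z\cap\mathfrak m))^\times\big)$ strictly smaller than the generic estimate $(1+\tfrac1p)|\mathfrak m|^{1/2}$, and one must exploit this saving — together with the finite, rather than infinite, geometric sums — to push the constant down to the stated $13$.
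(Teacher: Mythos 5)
Your proposal is correct and takes essentially the same route as the paper: the paper's proof of this proposition is exactly the argument of \Cref{prop:g^c_0(N)} specialized to $n=p^e$, simplified by the same field count you give (no imaginary quadratic field of discriminant dividing $p^e$ when $p\equiv 1\bmod 4$, only $\Q(\sqrt{-p})$ when $p\equiv 3 \bmod 4$, and only $\Q(i)$, $\Q(\sqrt{-2})$ when $p=2$). Your worry about the constant $13$ at $p=2$ is well-placed (the generic estimate $(1+\tfrac1p)|\mathfrak m_p|^{1/2}$ alone gives roughly $14.9\sqrt{2^e}$) but resolvable exactly as you suggest: at the ramified prime the quotient $(\calO_K/\mathfrak p^a)^\times/\big(\Z/(\Z\cap\mathfrak p^a)\big)^\times$ has order exactly $p^{\lfloor a/2\rfloor}$, which beats the generic bound and brings the constant safely below $13$, while for $p\equiv 3\bmod 4$ the explicit class numbers $h=1$ at $p\in\{3,7,11\}$ together with the paper's own class number estimate for $p\ge 19$ suffice for the constant $5.5\log p$.
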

The proof follows the same steps of the previous proposition and is simplified by the fact that there are few quadratic imaginary fields $K$ whose discriminant divides $p^e$. More precisely: there are two fields when $p=2$, there are no fields if $p \equiv 1 \bmod 4$ and there is only one field if $p \equiv 3 \bmod 4$. We now give an upper bound for the field of definition of the automorphisms of a Cartan modular curve of large enough level.
\begin{prop}\label{prop:field_Cartan(n)}
Let $n\ge 10^{400}$ be an integer and let $H<\GLn$ be either a Cartan or a Cartan-plus subgroup. Then every automorphism of $X_H$ is defined over the compositum of all the quadratic fields whose discriminant divides~$n$.
\end{prop}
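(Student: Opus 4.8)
The plan is to apply \Cref{JacFactorEnd} to $X=X_H$, which is smooth, projective and geometrically connected over $\Q$ (because $\det H=(\Z/n\Z)^\times$ for Cartan and Cartan-plus subgroups), taking the decomposition $A_1=\mathrm{Jac}(X_H)^{\tx N}$ and $A_2=\mathrm{Jac}(X_H)^\CM$ into non-CM and CM parts. Both are defined over $\Q$, and $\tx{Hom}_{\ol\Q}(A_1,A_2)=0$ since a simple non-CM abelian variety admits no nonzero homomorphism to a simple CM one. The field $F$ of \Cref{JacFactorEnd} will be the compositum of all quadratic fields whose discriminant divides $n$. Two things then have to be checked: the numerical inequality $g(X_H)>2\dim(A_2)+1$, and the equality $\End_{\ol\Q}(A_1)=\End_F(A_1)$.

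For the numerical inequality I would first use \Cref{cor:Chen_too_general}: since $\mathrm{Jac}(X_H)$ is, up to isogeny, an abelian subvariety of $J_0(n^2)$, its CM part is a subvariety of $J_0(n^2)^\CM$, whence $\dim(A_2)\le g_0^\CM(n^2)$. Bounding the right-hand side with \Cref{prop:g^c_0(N)} gives $\dim(A_2)\le 9\,(2\log n)^2\, n^{\,1+5.632/\log\log(n^2)}$, of size roughly $n^{1+o(1)}$, while \Cref{prop:genus_bounds} gives $g(X_H)>0.01\,n^{\,2-0.96/\log\log n}/\log\log n$, of size roughly $n^{2-o(1)}$. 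The gap between the two exponents, namely $(2-0.96/\log\log n)-(1+5.632/\log\log(n^2))$, is positive for $n\ge 10^{400}$ and increases to $1$, so a direct (if tedious) comparison of the explicit constants and of the $n^{O(1)/\log\log n}$ corrections yields $g(X_H)>2\dim(A_2)+1$. This is exactly where the enormous threshold $10^{400}$ is needed.

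The heart of the matter is the equality $\End_{\ol\Q}(A_1)=\End_F(A_1)$. By \Cref{cor:Chen_too_general}, $A_1$ is isogenous over $\Q$ to a product of abelian varieties $A_f$ attached to non-CM newforms $f$ of weight $2$ and trivial nebentypus, whose levels have the form $cd^2$ with $c\mid a^2$ and $d\mid b$, where $n=ab$ is the split/non-split factorization of $H$ (for a Cartan-plus subgroup one reduces first to its underlying Cartan subgroup $H'$: as $X_H$ is a quotient of $X_{H'}$, $\mathrm{Jac}(X_H)$ is isogenous to an abelian subvariety of $\mathrm{Jac}(X_{H'})$, so the same newforms occur). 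Since distinct Galois orbits of newforms give non-isogenous factors, all geometric endomorphisms of $A_1$ are block-diagonal and it suffices to control the field of definition of $\End_{\ol\Q}(A_f)$ for each $f$. By Ribet's theory of inner twists, for a non-CM newform with trivial nebentypus every geometric endomorphism of $A_f$ is defined over the compositum of the quadratic fields cut out by the inner twist characters $\chi$ of $f$ (these are quadratic precisely because the nebentypus is trivial). The crucial arithmetic input is the local statement $\tx{cond}(\chi)^2\mid cd^2$: writing $v_p$ for the $p$-adic valuation, this gives $v_p(\tx{cond}(\chi))\le \tfrac12 v_p(c)+v_p(d)\le v_p(a)+v_p(b)=v_p(n)$, using $c\mid a^2$ and $d\mid b$, hence $\tx{cond}(\chi)\mid n$. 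As the absolute value of the discriminant of the quadratic field attached to $\chi$ equals $\tx{cond}(\chi)\mid n$, this field lies among those of discriminant dividing $n$ and is therefore contained in $F$. Consequently $\End_{\ol\Q}(A_f)=\End_F(A_f)$ for every $f$, and hence $\End_{\ol\Q}(A_1)=\End_F(A_1)$.

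With both hypotheses verified, \Cref{JacFactorEnd} shows that every automorphism of $X_H$ over $\ol\Q$ is defined over $F$, the compositum of the quadratic fields of discriminant dividing $n$, which is the assertion. I expect the main obstacle to be the third step: one must pass through the structure of $\End_{\ol\Q}(A_f)$ via inner twists and, above all, through the local bound $\tx{cond}(\chi)^2\mid(\text{level of } f)$, which forces the twisting characters to be ramified only to the ``half power'' that the Chen levels $cd^2$ permit. This is precisely what brings the field of definition down from the naive compositum of quadratic fields of discriminant dividing $n^2$ to those of discriminant dividing $n$. The numerical step is conceptually routine but is what produces the gigantic lower bound on $n$.
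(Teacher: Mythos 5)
Your proposal is correct and follows essentially the same route as the paper's proof: the same CM/non-CM decomposition of $\mathrm{Jac}(X_H)$ fed into Lemma~\ref{JacFactorEnd}, with \Cref{cor:Chen_too_general}, \Cref{prop:g^c_0(N)} and \Cref{prop:genus_bounds} combining to give the numerical inequality $g(X_H)>2\dim(\mathrm{Jac}(X_H)^{\CM})+1$ for $n\ge 10^{400}$. The only difference is one of packaging: where you rederive the field of definition of $\End(\mathrm{Jac}(X_H)^{\tx{N}})$ by hand, via Ribet's inner-twist theory applied to the factors $A_f$ of Chen level $cd^2$ and the bound $\tx{cond}(\chi)^2\mid cd^2$, the paper obtains the same statement by citing \cite[Proposition 1.3]{KM} for $J_0(n^2)^{\tx{N}}$, which encapsulates exactly that inner-twist argument.
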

\begin{proof}
Let $J_H$ be the jacobian of $X_H$ and let $J_H^\CM$ and $J_H^{\tx N}$ be the CM part and the non-CM part of $J_H$ respectively. 
By Lemma \ref{JacFactorEnd}, it is enough to prove that $2\dim(J_H^\CM){+}1$ is smaller than the genus of $X_H$ and that every endomorphism of $J_H^{\tx{N}}$ is defined over the compositum of all the quadratic fields whose discriminant divides $n$. The latter is true because, by Theorem \ref{cor:Chen_too_general}, $J_H^\tx{N}$ is a quotient of $J_0(n^2)^{\tx N}$ and by in \cite[Proposition 1.3]{KM} every endomorphism of $J_0(n^2)^{\tx N}$ is defined over the compositum of all the quadratic fields whose discriminant divides $n$. 
By Theorem \ref{cor:Chen_too_general} $J_H^\CM$ is a quotient of $J_0(n^2)^\CM$ hence we can use Proposition \ref{prop:g^c_0(N)} to bound the $\dim(J_H^\CM)$; this, together with the bound for the genus $g(X_H)$ of $X_H$ given in Proposition \ref{prop:genus_bounds}, implies the inequality we need when $n\ge 10^{400}$:
\[
2 \dim(J_H^\CM) +1  \le 2 \dim(J_0(n^2)^\CM)+1  \le 73 \log(n)^2 n^{1 + \frac{5.632}{\log \log n}} < \frac{n^{2 - \frac{0.96}{\log \log n}}}{100 \log \log n} < g(X_H).
\]
\end{proof}

Proposition \ref{prop:field_Cartan(n)} can be made sharper when $n$ is a prime power.
\begin{prop}\label{prop:field_Cartan(p^e)}
Let $p$ be a prime and $e$ a positive integer and let $X$ be a curve associated to a Cartan or a Cartan-plus subgroup of level $p^e$. If the genus of $X$ is at least $2$, then every automorphism of $X$ is defined over the field
\[
K_p = 
\begin{cases} 
\Q(i, \sqrt 2),  &\quad \tx{if }p=2, \\
\Q\left( \sqrt{p} \right), &\quad \tx{if }p\equiv 1 \bmod 4, \\
\Q\left( \sqrt{-p} \right), &\quad \tx{if }p\equiv 3 \bmod 4.
\end{cases}
\] 
\end{prop}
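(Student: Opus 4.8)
The plan is to apply \Cref{JacFactorEnd} to $J:=\mathrm{Jac}(X)$ with $A_1:=J^{\tx{N}}$ its non-CM part and $A_2:=J^{\CM}$ its CM part, both of which are defined over $\Q$. A simple non-CM abelian variety and a simple CM one are never isogenous, so $\tx{Hom}_{\ol\Q}(A_1,A_2)=0$ and $J\sim A_1\times A_2$ over $\Q$. The lemma then reduces the proposition to two claims: that every endomorphism of $A_1$ is defined over $K_p$, and that $g(X)>2\dim(A_2)+1$.

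For the first claim, \Cref{cor:Chen_too_general} realizes $J$, and hence $A_1$, as a $\Q$-isogeny factor of $J_0(p^{2e})$; since the idempotent cutting out this factor is defined over $\Q$, the field of definition of $\End(A_1)$ is bounded by that of $\End\!\big(J_0(p^{2e})^{\tx{N}}\big)$. As in \Cref{prop:field_Cartan(n)}, the latter is, by \cite[Proposition~1.3]{KM}, the compositum of the quadratic fields whose discriminant divides $p^{e}$. I would then enumerate these fields: for $p\equiv 1\bmod 4$ the only one is $\Q(\sqrt p)$, for $p\equiv 3\bmod 4$ the only one is $\Q(\sqrt{-p})$, and for $p=2$ (where $g(X)\ge 2$ forces $e\ge 3$) they are $\Q(i)$, $\Q(\sqrt2)$ and $\Q(\sqrt{-2})$, whose compositum is $\Q(i,\sqrt2)$. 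In every case this compositum is exactly $K_p$, which settles the first claim.

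For the second claim I would split according to $p$. When $p\equiv 1\bmod 4$, \Cref{prop:g^c_0(p^e)} gives $g_0^\CM(p^{2e})=0$, hence $A_2=0$, and the required inequality is precisely the hypothesis $g(X)\ge 2$. When $p=2$ or $p\equiv 3\bmod 4$ I would bound $\dim(A_2)\le g_0^\CM(p^{2e})$ via \Cref{cor:Chen_too_general} and \Cref{prop:g^c_0(p^e)}, and compute $g(X)$ directly from \Cref{eq:genus_solita} and \Cref{table1}. Since the genus grows like $p^{2e}/12$ (or $p^{2e}/24$ for the Cartan-plus curves) while the CM bound is only $O(p^{e}\log p)$, the inequality $g(X)>2\dim(A_2)+1$ holds for every $p^e$ above an explicit threshold.

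The main obstacle is the finite list of prime powers lying below this threshold. Here the estimate of \Cref{prop:g^c_0(p^e)} is far too lossy to conclude: for instance for $X_\ns(27)$ it only yields $\dim(A_2)\le 163$, useless against the genus $32$, even though the true CM dimension is a single digit. For these remaining cases I would discard the crude estimate and instead compute $\dim(A_2)$ exactly, using the Chen decomposition $J_\ns(p^e)\sim\prod_{r=1}^{e}J_0^\new(p^{2r})$ of \Cref{prop:Chen_Xns(p^e)} (and the corresponding decompositions for the split and Cartan-plus curves) together with the dimensions of the CM subspaces of $S_2(\Gamma_0(p^{2r}))$, which are controlled by the Hecke characters of $K_p$ of $p$-power conductor; verifying $g(X)>2\dim(A_2)+1$ in each of these finitely many cases then completes the argument.
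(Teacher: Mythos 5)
Your proposal follows the same architecture as the paper's proof: split $\mathrm{Jac}(X)$ as $A_1\times A_2$ into non-CM and CM parts, control the field of definition of $\End(A_1)$ via \Cref{cor:Chen_too_general} and \cite[Proposition~1.3]{KM}, and feed genus and CM-dimension bounds into \Cref{JacFactorEnd}. Your treatment of the large-level regime and of the case $p\equiv 1\bmod 4$ (where $A_2=0$ and the hypothesis $g\ge 2$ is all that is needed) is correct and agrees with the paper.

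The gap is in the finitely many small cases. You claim that once $\dim(A_2)$ is computed exactly, the inequality $g(X)>2\dim(A_2)+1$ will hold in each remaining case; with $A_2$ the \emph{full} CM part this is false, not merely tedious to check. For $p=2$ the small-level jacobians are saturated with CM factors: $J_0^\new(32)$ and $J_0^\new(64)$ are elliptic curves with $j=1728$, hence CM by $\Q(i)$, and $J_0^\new(256)$ is entirely of CM type (four elliptic curves over $\Q$ with CM by $\Q(i)$ or $\Q(\sqrt{-2})$, plus a simple CM abelian surface --- the factor responsible for the entry $\widetilde{\mathrm{cm}}=2$ in \Cref{tab:lowlevels}). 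Consequently $\Xs(2^3)\cong X_0(64)$ has genus $3$ but CM part of dimension $3$; $X_\ns(16)$ has genus $7$ while $J_\ns(16)\sim J_0^\new(64)\times J_0^\new(256)$ (by \Cref{prop:Chen_Xns(p^e)}) has CM part of dimension $7$; and $\Xns^+(16)$ has genus $2$ with entirely CM jacobian, so your $A_1$ is actually $0$. All three curves are within the scope of the proposition, and in each of them the hypothesis $g>2\dim(A_2)+1$ of \Cref{JacFactorEnd} fails for your choice of $A_2$. The idea you are missing --- and the paper's actual fix --- is that a CM factor isogenous to an elliptic curve over $\Q$ has all of its geometric endomorphisms defined over its CM field, an imaginary quadratic field contained in $K_p$, so such factors must be grouped with $A_1$ rather than $A_2$. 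The quantity to compare with the genus is then the paper's $\widetilde{\mathrm{cm}}$, the CM dimension after discarding rational elliptic factors: it equals $2$ for $X_\ns(16)$ and $0$ for $\Xns^+(16)$ and $X_0(64)$, restoring the inequality. Two further remarks: the paper does not even run this argument for $\Xs(p^e)\cong X_0(p^{2e})$, quoting \cite[Corollary~1.14]{KM} instead, and handles $\Xs^+(p^e)$ via \Cref{prop:field_X0+(p^e)}; and if you do regroup the rational CM elliptic curves into $A_1$, you must still recheck the hypothesis $\mathrm{Hom}_{\ol\Q}(A_1,A_2)=0$ of \Cref{JacFactorEnd}, since a curve in $A_1$ may share its CM field with a factor of $A_2$ --- this case-by-case bookkeeping is exactly what the paper delegates to its MAGMA script.
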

A strategy of proof is the same of Proposition \ref{prop:field_Cartan(n)}:
\begin{enumerate}[(i)]
\item\label{item1_rom} give an upper bound for $\dim (\tx{Jac}(X)^\CM)$;
\item\label{item2_rom} give a lower bound for the genus;
\item\label{item3_rom} apply \cite[Proposition 1.3]{KM} and Theorem \ref{cor:Chen_too_general} to deduce that the endomorphisms of $\tx{Jac}(X)^{\tx{N}}$ are defined over $K_p$;
\item\label{item4_rom} apply Lemma \ref{JacFactorEnd}.
\end{enumerate} 
In particular in the case of $X_\ns(p^e)$ and $X_\ns^+(p^e)$, when $p^e > 600$, the propositions \ref{prop:Chen_Xns(p^e)} and \ref{prop:g^c_0(p^e)} and Table \ref{table1} give bounds in (\ref{item1_rom})  and (\ref{item2_rom}) that are sharp enough for (\ref{item4_rom}). If $p^e \le 600$, the bounds in Proposition \ref{prop:g^c_0(p^e)} are sometimes not sharp enough. In these cases we can compute explicitly the CM part and notice that only a factor of it of low dimension has endomorphisms defined over a field bigger than $K_p$: whenever a CM factor is a rational elliptic curve, we know by CM theory that its endomorphisms are defined over $K_p$ and it can be discarded from the count. This is done in the MAGMA script available at \cite{Script_aut}. 
The case $X_\text{s}(p^e) \cong X_0(p^{2e})$ follows from \cite[Corollary $1.14$]{KM} and the case $X_\tx{s}^+(p^e)\cong X_0(p^{2e})$ follows from the following proposition.

\begin{prop}\label{prop:field_X0+(p^e)}
	Let $p$ be a prime and $e$ a positive integer. If the genus of $X_0^*(p^e)$ is at least $2$, then every automorphism of $X_0^*(p^e)$ is defined over the field
	\[
	K_p = \begin{cases} 
	\Q(i, \sqrt 2),  &\quad \tx{if }p=2 , \\
	\Q\left( \sqrt{p} \right), &\quad \tx{if }p\equiv 1 \bmod 4, \\
	\Q\left( \sqrt{-p} \right), &\quad \tx{if }p\equiv 3 \bmod 4.
	\end{cases}
	\] 
\end{prop}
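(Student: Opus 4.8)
The plan is to follow the same four-step strategy (i)--(iv) used for \Cref{prop:field_Cartan(p^e)}, with the single structural change that the Chen-type isogeny of \Cref{prop:Chen_Xns(p^e)} is replaced by the Atkin--Lehner decomposition of $J_0(p^e)$. Since $p^e$ is a prime power, the only Atkin--Lehner involution of $X_0(p^e)$ is $w_{p^e}$, so $X_0^*(p^e)=X_0(p^e)/\langle w_{p^e}\rangle$ and the quotient map is a degree-$2$ cover. Pushing forward divisor classes along this cover realizes $\tx{Jac}(X_0^*(p^e))$, up to isogeny over $\Q$, as the $(+1)$-eigenvariety of $w_{p^e}$ acting on $J_0(p^e)$; in particular $\tx{Jac}(X_0^*(p^e))$ is a $\Q$-isogeny factor of $J_0(p^e)$, which is all the argument needs.

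For step (i), being a quotient of $J_0(p^e)$ the variety $\tx{Jac}(X_0^*(p^e))$ has CM part a quotient of $J_0(p^e)^\CM$, so $\dim(\tx{Jac}(X_0^*(p^e))^\CM)\le g_0^\CM(p^e)$ and \Cref{prop:g^c_0(p^e)} applies; when $p\equiv 1\bmod 4$ this dimension is $0$ and the case is essentially immediate. For step (ii), since $w_{p^e}$ is an involution, Riemann--Hurwitz gives $g(X_0^*(p^e))=\tfrac12(g(X_0(p^e))+1)-\tfrac14 f$, where $f$ is the number of fixed points of $w_{p^e}$; as $f$ is bounded by a constant times $\sqrt{p^e}\log p^e$ (it is governed by class numbers of imaginary quadratic orders of discriminant of size $p^e$), while $g(X_0(p^e))$ grows like $p^e/12$, the genus $g(X_0^*(p^e))$ grows linearly in $p^e$. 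For step (iii) I would invoke \cite[Proposition 1.3]{KM}: every endomorphism of $J_0(p^e)^{\tx N}$ is defined over the compositum of the quadratic fields whose discriminant divides $p^e$, and a direct check identifies this compositum with $K_p$ in each of the three cases. As $\tx{Jac}(X_0^*(p^e))^{\tx N}$ is a $\Q$-isogeny factor of $J_0(p^e)^{\tx N}$, its endomorphism algebra embeds Galois-equivariantly into that of $J_0(p^e)^{\tx N}$, so $\End_{\ol\Q}(\tx{Jac}(X_0^*(p^e))^{\tx N})=\End_{K_p}(\tx{Jac}(X_0^*(p^e))^{\tx N})$.

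For step (iv) I would apply \Cref{JacFactorEnd} over $K=\Q$ with $F=K_p$, taking $A_1=\tx{Jac}(X_0^*(p^e))^{\tx N}$ and $A_2=\tx{Jac}(X_0^*(p^e))^\CM$. The hypothesis $\tx{Hom}_{\ol\Q}(A_1,A_2)=0$ holds by the very definition of the CM and non-CM parts, the equality $\End_{\ol\Q}(A_1)=\End_{K_p}(A_1)$ is step (iii), and the decisive condition to verify is $g(X_0^*(p^e))>2\dim(A_2)+1$, that is $g(X_0^*(p^e))>2\dim(\tx{Jac}(X_0^*(p^e))^\CM)+1$.

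The hard part is exactly this last inequality for small $p^e$. Asymptotically it is comfortable, since by step (ii) the genus grows like $p^e$ whereas \Cref{prop:g^c_0(p^e)} bounds the CM part by a quantity of size $\sqrt{p^e}\log p$; but for the finitely many small prime powers with $g(X_0^*(p^e))\ge 2$ the crude bound need not be sharp enough. In those cases I would compute the CM part of the $(+1)$-eigenvariety explicitly and, exactly as for the Cartan curves, move into $A_1$ every CM factor that is a rational elliptic curve---whose endomorphisms are defined over $K_p$ by CM theory, since the relevant imaginary quadratic field is contained in $K_p$---leaving a lower-dimensional $A_2$; one checks directly that $\tx{Hom}_{\ol\Q}(A_1,A_2)=0$ is preserved, so that \Cref{JacFactorEnd} applies once $g(X_0^*(p^e))>2\dim(A_2)+1$. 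This final verification is carried out by the MAGMA script of \cite{Script_aut}.
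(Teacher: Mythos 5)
Your proposal is correct and is essentially the paper's own proof: the paper establishes this proposition by invoking the same four-step strategy as in \Cref{prop:field_Cartan(p^e)} --- the CM bound of \Cref{prop:g^c_0(p^e)}, a genus lower bound obtained exactly as you obtain it (Riemann--Hurwitz for $X_0(p^e)\to X_0^*(p^e)$ with the Atkin--Lehner fixed points counted by class numbers, see \Cref{rem:g0+}), \cite[Proposition 1.3]{KM} for the non-CM part, and \Cref{JacFactorEnd} --- with the small levels settled by the MAGMA script of \cite{Script_aut} via the same device of discarding rational CM elliptic-curve factors. Your one structural substitution, realizing $\mathrm{Jac}(X_0^*(p^e))$ up to $\Q$-isogeny as the $(+1)$-eigenvariety of $w_{p^e}$ in $J_0(p^e)$ rather than appealing to a Chen-type isogeny, is precisely the adaptation the paper's strategy requires here, since for odd $e$ the curve is not a Cartan curve and the degree-$2$ quotient map exhibits its Jacobian as a $\Q$-isogeny factor of $J_0(p^e)$ directly.
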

Again, one can apply the same strategy used for Propositions \ref{prop:field_Cartan(n)} and \ref{prop:field_Cartan(p^e)}, together with the MAGMA script available at \cite{Script_aut}. In particular we need a lower bound for the genus of $X_0^*(p^e)$. Since we do not know an explicit reference giving a formula for this genus, we write it in the following remark.

\begin{rem}\label{rem:g0+}
	Given a positive integer $n$, let $X_0^+(n)$ be the quotient of $X_0(n)$ by the $n$-th Atkin-Lehner operator. This curve is equal to $X_0^*(n)$ when $n$ is the power of a prime. 

In \cite[Equation $9$]{Ogg75} there is a formula for the genus $g_0^+(n)$ of $X_0^+(n)$ when $n$ is prime. When $n=p^{2e}$ with $p$ prime, we can compute $g_0^+(n)$ using Table \ref{table1} since $X_0^+(n)$ is isomorphic to a split Cartan curve. For general $n$, \cite[Equation $9$]{Ogg75} can be easily generalized applying Riemann-Hurwitz formula to the natural map $X_0(n) \to X_0^+(n)$  and counting the number of fixed points of the $n$-th Atkin-Lehner operator. This gives
	\[
	g_0^+(n) = \begin{cases}
	0, \quad & \tx{if }n \in \{1,2,3,4\}, \\
	\frac{1 + g_0(n)}{2} - \frac{h(-n) + h(-4n)}{4}, \quad & \tx{if $n\ge5$ is odd},\\ 
	\frac{1 + g_0(n)}{2} - \frac{h(-4n)}{4}, \quad & \tx{if $n\ge5$ is even},
	\end{cases}
	\]
	where $g_0(n)$ is the genus of $X_0(n)$ and $h(D)$ is the class number of the quadratic order with discriminant $D$, with the convention $h(D)=0$ if $D$ is a square or if $D \equiv 2,3 \bmod 4$.
\end{rem}

\begin{rem}\label{rem:48}
We are not always able to prove that every automorphism of a Cartan modular curve is defined over a compositum of quadratic fields. For example, an analogous of \Cref{Cartan-Borel-isogeny-fine} for Cartan-plus curves, proved using Chen's isogeny in \cite{ChenPowers}, implies that the jacobian of the totally non-split Cartan-plus curve $X$ of level $48$ contains  $J_0^{\new,*}(48^2)$. Since there are two CM (weight 2) newforms of level $48^2$ of degree $2$ and invariant under the action of both the Atkin-Lehner operators $w_9$ and $w_{256}$, then the jacobian $J_0^{\new,*}(48^2)$ has a CM part of dimension at least $4$ whose endomorphisms could be defined over a field bigger than the compositum of quadratic fields. This prevents us from applying \Cref{JacFactorEnd} in (\ref{item4_rom}) of the strategy above, because the genus of $X$ is $9$ (see \Cref{table1}).
\end{rem}

\section{Automorphisms}\label{Sec:Aut}

In this section we treat our main problem, namely to determine the automorphisms of certain modular curves $X_H$ over $\C$ for a subgroup $H$ of $\GLn$. We restrict our attention to $X_H$ geometrically connected, i.e., $\det(H) = (\Z/n\Z)^\times$. Every automorphism we are interested in induces an automorphism of the Riemann surface $X_H(\C)= \Gamma_H\backslash \ol{\HH}$ and, since it is compact, each of these automorphisms comes from an automorphism of the algebraic curve $(X_H)_\C$. Let $\PP \colon \tx{GL}_2^+(\Q)\to \tx{PGL}_2^+(\Q)$ be the natural map. Each matrix $m \in \tx{PGL}^+_2(\Q)$ defines a M\"obius transformation $m\colon \ol\HH \to \ol\HH$ and such an automorphism of the Riemann surface $\ol\HH$ pushes down to an automorphism of $\Gamma_H\backslash \ol{\HH}$ if and only if $m$ normalizes $\PP(\Gamma_H)$.

\begin{defi}
Let $H$ be a subgroup of $\GLn$ such that $\det(H)= (\Z/n\Z)^\times$. An automorphism of $X_H$ defined over $\C$ is \emph{modular} if its action on $X_H(\C) = \Gamma_H\backslash \ol{\HH}$ is described by a M\"obius transformation associated to a matrix $m \in \tx{PGL}^+_2(\Q)$ normalizing $\PP(\Gamma_H)$.
\end{defi}
When $H$ has surjective determinant, $\Aut(X_H)$ contains the subgroup of modular automorphisms which is isomorphic to $\mathcal N/\PP(\Gamma_H)$, where $\mathcal N$ is the normalizer of $\PP(\Gamma_H)$ inside $\tx{PGL}_2^+(\Q)$. 

\begin{rem}\label{PGL2Q-R}
Notice that we can define modular automorphisms of $Y_H$ looking at $\tx{PGL}_2^+(\R)$, instead of $\tx{PGL}_2^+(\Q)$, as follows: an automorphism $\iota$ of $Y_H(\C)=\Gamma_H\backslash \HH$ is \emph{modular} if there is a matrix $m \in \tx{PGL}_2^+(\R)$ that normalizes the image of $\Gamma_H$ in $\tx{PGL}_2^+(\R)$ and hence defines a M\"obius transformation $m\colon \HH \to \HH$ that pushes down to $\iota$. This is equivalent to the previous definition. Indeed if $\tilde m \in \tx{GL}_2^+(\R)$ is a lift of $m$, then $\tilde m$ normalizes $\Gamma_{\pm H} = (\R^\times\Gamma_H)\cap \tx{SL}_2(\R)$, hence conjugation by $\tilde m$ preserves the set of $\Q$-linear combinations of matrices in $\Gamma_{\pm H}$, which is equal to the set of matrices with entries in $\Q$. Looking at the conjugates by $\tilde m$ of the matrices $\smt 1000$, $\smt 0100$, $\smt 0010$ and $\smt 0001$, we easily deduce that $\tilde m$ is a real multiple of a matrix in $\GL(\Q)$, and consequently $m$ lies in $\tx{PGL}^+_2(\Q)$. 

In other words: every modular automorphism of $Y_H(\C)$ extends to a modular automorphism of $X_H$ and, conversely,  every modular automorphism of $X_H$ preserves the set of cusps, hence restricts to a modular automorphism of $Y_H(\C)$.
\end{rem}

If an automorphism is modular, then it preserves the set of cusps and the set of branch points for the map $\HH \to \Gamma_H\backslash\HH$, because the map $\ol\HH \to X_H(\C)$ is branched on these sets and $\PP^1(\Q)$ is stable under $\tx{PGL}_2(\Q)$.
The converse is also true, as shown in \cite[Proposition 3.1]{DoseAut}. In the following lemma we use this criterion to give a (different) sufficient condition for an automorphism to be modular.

\begin{lem}\label{lem:aut_general_H}		
Let $n$ be a positive integer, let $H$ be a subgroup of $\GLn$ containing the scalar matrices and such that $\det(H) = (\Z/n\Z)^\times$, and let $\tx{gon}(X_H)$ be the gonality of $X_H$. If there is a prime $\ell$ not dividing $n$ such that $5\le \ell < \tfrac 1 2 \tx{gon}(X_H)-1$, then every automorphism of $X_H$ defined over a compositum of quadratic fields is modular.
\end{lem}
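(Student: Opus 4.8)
The plan is to apply the modularity criterion recalled just before the statement, proved in \cite[Proposition 3.1]{DoseAut}: an automorphism of $X_H$ is modular if and only if it preserves the set of cusps and the set of branch points of the covering $\HH \to \Gamma_H\backslash\HH$. So I would fix an automorphism $u$ of $X_H$ defined over a compositum of quadratic fields and show that it preserves both of these sets, the two main ingredients being \Cref{uandTlprop} and \Cref{theo:repetitions}.

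First I would translate the numerical hypothesis: the inequality $\ell < \tfrac{1}{2}\tx{gon}(X_H)-1$ is equivalent to $\tx{gon}(X_H) > 2(\ell+1)$, which is exactly the condition under which \Cref{uandTlprop} upgrades the identity $T_\ell \circ u = u^\sigma \circ T_\ell$ from $\End(\mathrm{Jac}(X_H))$ to the level of divisors, where $\sigma$ is a Frobenius element at $\ell$. Evaluating this relation on a point $P \in X_H(\C)$ gives the equality of effective divisors $T_\ell(u(P)) = u^\sigma(T_\ell(P))$ on $X_H$.

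The crux is the observation that $u^\sigma$ is an automorphism of the curve, so its pushforward permutes points and preserves the multiplicity with which each point occurs in a divisor; hence $T_\ell(P)$ and $T_\ell(u(P))$ carry the same collection of multiplicities for every $P$. By \Cref{theo:repetitions}, this collection detects intrinsically whether $P$ is a cusp (some point occurs with multiplicity at least $4$), a point $(E_\rho,\phi)$ with $\phi^{-1}\circ\rho|_{E_\rho[n]}\circ\phi \in \pm H$ (some point occurs with multiplicity exactly $3$), or a point $(E_i,\phi)$ with $\phi^{-1}\circ i|_{E_i[n]}\circ\phi \in \pm H$ (two distinct points occur with multiplicity $2$). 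Consequently $P$ is a cusp if and only if $u(P)$ is a cusp, and likewise $P$ is of one of the two elliptic types if and only if $u(P)$ is of the same type. Since $u$ is bijective, it preserves the set of cusps setwise, and it preserves the union of the two families of elliptic points.

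Finally, \Cref{PropRamPoint} identifies the branch points of $\HH \to \Gamma_H\backslash\HH$ with exactly those $(E_\rho,\phi)$ and $(E_i,\phi)$ satisfying the respective membership conditions in $\pm H$. Thus $u$ preserves the set of cusps and the set of branch points, and the criterion yields that $u$ is modular. I expect no serious obstacle in this lemma itself: the substantive work has been absorbed into \Cref{uandTlprop} and \Cref{theo:repetitions}, and what remains is the passage from Jacobian-level to divisor-level commutation (which is precisely where the gonality hypothesis enters) together with the routine verification that the standing assumptions---$H$ contains the scalars, $\det(H) = (\Z/n\Z)^\times$, $\ell \nmid n$ and $\ell \ge 5$---are exactly those required to invoke the two results.
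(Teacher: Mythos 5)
Your proposal is correct and follows essentially the same route as the paper's own proof: it combines \Cref{uandTlprop} (where the gonality hypothesis $\tx{gon}(X_H)>2(\ell+1)$ yields the divisor-level identity $T_\ell u(P)=u^\sigma T_\ell(P)$), \Cref{theo:repetitions} (multiplicities in $T_\ell(P)$ detect cusps and the two elliptic types), \Cref{PropRamPoint} (identification of branch points), and the criterion of \cite[Proposition 3.1]{DoseAut}. The only difference is cosmetic: the paper spells out the last step via an automorphism of $\HH$ and \Cref{PGL2Q-R} rather than quoting the criterion as a biconditional, but the content is identical.
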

\begin{proof}
Let $u$ be an automorphism of $X_H$ defined over the compositum of some quadratic fields and let $P\in X_H(\C)$ be either a cusp or a branch point of the map $\HH \to \Gamma_H\backslash\HH= Y_H(\C)$. By \Cref{PropRamPoint}, if $P$ is branch point, then we have either $P=(E_i, \phi)$ with $\phi^{-1} {\circ} i|_{E_i[n]} {\circ} \phi\in \pm H$ or $P=(E_\rho,\phi)$ with $\phi^{-1} {\circ} \rho|_{E_\rho[n]} {\circ} \phi\in \pm H$. Hence, in both cases (cusp or branch point) we can apply \Cref{theo:repetitions} and deduce which particular multiplicities appear in $T_\ell (P)$: depending on the case, $T_\ell (P)$ contains either a point with multiplicity at least~$4$ or a point with multiplicity~$3$ or two distinct points with multiplicity~$2$. Since $u$ is defined over a compositum of quadratic fields and $\ell < \tfrac 1 2 \tx{gon}(X_H)-1$, we can apply \Cref{uandTlprop} to deduce that $T_\ell (P)$ and $T_{\ell}u(P)$ have the same ``shape''. 
Since $u$ is defined over a compositum of quadratic fields and $\ell < \tfrac 1 2 \tx{gon}(X_H)-1$, we can apply \Cref{uandTlprop} to deduce that, under the definition of $\sigma$ given in the same proposition, $T_{\ell}u(P) = u^\sigma T_\ell (P)$, hence the same coefficients appear in the divisors $T_{\ell}u(P)$ and $T_{\ell}(P)$, since $u^\sigma$ is an automorphism.
 Hence, applying \Cref{theo:repetitions} once again, we have that $u(P)$ is a cusp if $P$ is a cusp and $u(P)$ is a branch point if $P$ is branch point. 
		
Therefore we proved that $u$ preserves the set of cusps and the set of branch points. Applying \cite[Proposition 3.1]{DoseAut}, we obtain that $u$ is induced by an automorphism $v \colon \HH \to \HH$. We know that $\Aut(\HH)= \tx{PGL}_2^+(\R)$, hence $v$ is a M\"obius transformation given by a matrix $m \in \tx{PGL}_2^+(\R)$. Since it factors through the quotient, $m$ belongs to the normalizer of the image of $\Gamma_H$ in $\tx{PGL}_2^+(\R)$. Hence the restriction of $u$ to $Y_H$ is modular and, by \Cref{PGL2Q-R}, $u$ itself is modular.
\end{proof}

We still need to determine which are the modular automorphisms of a modular curve $X_H$ for Cartan and Cartan-plus subgroups $H$ of $\GLn$. Since in these cases we have $\det(H) =(\Z/n\Z)^\times$, then $Y_H$ also parametrizes pairs $(E,\phi)$ such that the Weil pairing of $(\phi\left(\begin{smallmatrix}1\\0\end{smallmatrix}\right), \phi\left(\begin{smallmatrix}0\\1\end{smallmatrix}\right))$ is fixed, up to the action of $H{\cap}\tx{SL}_2(\Z/n\Z)$. With this interpretation, every matrix $\gamma \in \tx{SL}_2(\Z/n\Z)$ that normalizes $H{\cap}\tx{SL}_2(\Z/n\Z)$ defines an automorphism of $Y_H$ sending $(E,\phi) \mapsto (E, \phi\circ\gamma)$: such an automorphism is modular, induced by a lift of $\gamma$ in $\tx{SL}_2(\Z)$. 
The next proposition implies that these are all the modular automorphisms except when $n \equiv 2 \bmod 4$ and $H$ is a Cartan-plus which is split at $2$. We now suppose we are in this last case and we construct another modular automorphism. Letting $n= 2n'$, we have  
\[
H = H_2 \times H_{n'} \subset \GL(\Z/2\Z) \times \GL(\Z/n' \Z) = \GLn,
\] 
where $H_2$ and $H_{n'}$ are the images of $H$ in $\GL(\Z/2\Z)$ and $\GL(\Z/n'\Z)$ respectively. Since we are assuming that $H_2$ is a split Cartan-plus subgroup, there are three possibilities for $H_2$ (all conjugated) and, depending on them, we define
\begin{equation}\label{eq:ugly_matrix_1}
\gamma_0 := \begin{cases}
\smt 3111, \quad \tx{if }H_2 = \{\tx{Id}, \smt0110 \},\\
\smt 2212, \quad \tx{if }H_2 = \{\tx{Id}, \smt1101 \},\\
\smt 2122, \quad \tx{if }H_2 = \{\tx{Id}, \smt1011 \}.
\end{cases}
\end{equation}
Since the projection $\tx{SL}_2(\Z)\to \tx{SL}_2(\Z/2n\Z) = \tx{SL}_2(\Z/4\Z){\times}\tx{SL}_2(\Z/n'\Z)$ is surjective and since $\det(H_{n'}) = (\Z/n'\Z)^\times$, there exists
\begin{equation}\label{eq:ugly_matrix_2}
\gamma_1 \in \tx{SL}_2(\Z) \quad \tx{such that} \quad \gamma_1 \equiv \smt 1001 \!\!\!\!\!\pmod 4 \quad \tx{and} \quad  (\gamma_1\gamma_0)^T \!\!\!\!\!\pmod{n'}  \in H_{n'}. 
\end{equation}
The matrix $\mathbb{P}(\gamma_1\gamma_0)$ lies in the normalizer $\mathcal N$ of $\PP(\Gamma_H)$ inside $\tx{PGL}_2^+(\Q)$ and we have that $\mathbb{P}(\gamma_1\gamma_0)^2\in\PP(\Gamma_H)$, hence $\mathbb{P}(\gamma_1\gamma_0)$ induces an involution on $X_H$. Since $\PP(\gamma_1\gamma_0) \notin \PP(\tx{SL}_2(\Z))$, the modular automorphism defined by $\gamma_1\gamma_0$ is not of the form $(E,\phi) \mapsto (E, \phi\circ\gamma)$ with $\gamma \in \tx{SL}_2(\Z/n\Z)$.

\begin{prop}\label{prop:Cartan_normalizer_in_GL2Q}
Let $n$ be a positive integer and let $H<\GLn$ be either a Cartan subgroup or a Cartan-plus subgroup. Let $N'<\mathrm{SL}_2(\Z/n\Z)$ be the normalizer of the group $H':=H{\cap}\tx{SL}_2(\Z/n\Z)$ and let $\mathcal N$ be the normalizer of $\PP(\Gamma_H)$ in $\tx{PGL}_2^+(\Q)$.
If $n\equiv 2 \bmod 4$ and $H$ is a Cartan-plus split at $2$, then, for every choice of $\gamma_0$ and $\gamma_1$ as in (\ref{eq:ugly_matrix_1}) and (\ref{eq:ugly_matrix_2}), $\mathcal N$ is generated by $\PP(\Gamma_{N'})$ and $\mathbb{P}(\gamma_1\gamma_0)$. 
Otherwise $\mathcal N$ is $\PP(\Gamma_{N'})$.
\end{prop}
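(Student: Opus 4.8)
The inclusions $\PP(\Gamma_{N'})\subseteq\mathcal N$ and, in the exceptional case, $\PP(\gamma_1\gamma_0)\in\mathcal N$ (the latter checked right before the statement) are the easy direction, so the whole content is the reverse inclusion. The plan is to organize it around the determinant homomorphism $\det\colon\mathcal N\to\Q^{>0}/(\Q^{>0})^2$, which is well defined because rescaling a lift by $\lambda\in\Q^{>0}$ multiplies its determinant by $\lambda^2$. I would reduce everything to two claims: (a) $\ker(\det)=\PP(\Gamma_{N'})$, and (b) the image of $\det$ is trivial in general and of order $2$, generated by the class of $\det(\gamma_1\gamma_0)=2$, exactly in the exceptional case. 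Granting these, the generic case gives $\mathcal N=\ker(\det)=\PP(\Gamma_{N'})$, while in the exceptional case any $m\in\mathcal N$ of non-square determinant satisfies $m\,\PP(\gamma_1\gamma_0)^{-1}\in\ker(\det)=\PP(\Gamma_{N'})$, so $\mathcal N$ is generated by $\PP(\Gamma_{N'})$ and $\PP(\gamma_1\gamma_0)$.

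First I would compute the integral part $\mathcal N\cap\PP(\SLZ)$. Note that $\Gamma_H=\Gamma_{H'}$ depends only on $H'$, that $\Gamma_{H'}\supseteq\Gamma(n)$ because $H'\ni\mathrm{Id}$, and that $\Gamma_{H'}/\Gamma(n)$ equals the transpose set $(H')^{T}$. Since $\Gamma(n)\triangleleft\SLZ$, a matrix $\gamma\in\SLZ$ normalizes $\Gamma_{H'}$ if and only if its reduction $\bar\gamma$ normalizes $(H')^{T}$ in $\SLn$; transposing this relation turns it into $\bar\gamma^{T}H'\bar\gamma^{-T}=H'$, i.e. $\bar\gamma^{T}\in N'$, i.e. $\gamma\in\Gamma_{N'}$. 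As $-\mathrm{Id}$ lies in $H'$ (it is a scalar in every Cartan or Cartan-plus subgroup), passing to $\PP$ introduces no ambiguity, and we obtain $\mathcal N\cap\PP(\SLZ)=\PP\bigl(N_{\SLZ}(\Gamma_{H'})\bigr)=\PP(\Gamma_{N'})$.

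Next I would bound the determinant of a general element. Let $m\in\mathcal N$ and choose a primitive integral lift $M=\smt{\alpha}{\beta}{\gamma}{\delta}$ with $\det M=d>0$. Since $M$ normalizes $\Gamma_{H'}\supseteq\Gamma(n)$, it conjugates $\Gamma(n)$ into $\SLZ$; conjugating the parabolic generators $\smt1n01$ and $\smt10n1$ of $\Gamma(n)$ by $M$ produces $I+\tfrac{n}{d}\smt{-\alpha\gamma}{\alpha^2}{-\gamma^2}{\alpha\gamma}$ and $I+\tfrac{n}{d}\smt{\beta\delta}{-\beta^2}{\delta^2}{-\beta\delta}$, and imposing integrality of the entries gives $d\mid n\alpha^2$, $d\mid n\beta^2$, $d\mid n\gamma^2$, $d\mid n\delta^2$. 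Primitivity of $M$ yields $\gcd(\alpha^2,\beta^2,\gamma^2,\delta^2)=1$, hence $d\mid n$. Writing $M=U\,\mathrm{diag}(1,d)\,V$ in Smith normal form with $U,V\in\GL(\Z)$ (the first elementary divisor is $1$ by primitivity) then reduces the analysis of each admissible $d$ to conjugation by the single scaling matrix $\mathrm{diag}(1,d)$, modulo the integral part already determined.

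Finally, the heart of the proof, and the step I expect to be the main obstacle, is the local classification of the admissible determinants. Using $\GLn\cong\prod_{p\mid n}\GL(\Z/p^{e_p}\Z)$ and $\Gamma_H=\bigcap_{p\mid n}\Gamma_{H_p}$, I would show that for each prime $q\mid d$ the $q$-component of the conjugation must carry the local congruence group $\Gamma_{H_q}$ to itself, and classify exactly when a determinant divisible by $q$ is possible. The expected rigidity statement is: at every odd prime, and at $p=2$ whenever $H$ is non-split at $2$, or split but $4\mid n$, or merely a Cartan (rather than Cartan-plus) subgroup, the local Cartan structure admits no normalizing element of determinant divisible by $q$; the sole exception is $q=2$ with $2\parallel n$ and $H$ a split Cartan-plus at $2$, where $\Gamma_{H_2}$ is $\SLZ$-conjugate to $\Gamma_0(2)$ and hence carries the Atkin--Lehner element $w_2$ of determinant $2$. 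This forces $d=1$ in the generic case, and $d\in\{1,2\}$ with the determinant-$2$ class realized by $\PP(\gamma_1\gamma_0)$ in the exceptional case; since such an admissible $d\mid n$ has $2$-adic valuation at most $1$ and no odd prime divisor, any square admissible $d$ equals $1$, so every element of $\ker(\det)$ is integral of determinant $1$ and thus lies in $\mathcal N\cap\PP(\SLZ)=\PP(\Gamma_{N'})$, establishing (a) and (b). The delicate part is proving this local rigidity cleanly, controlling prime by prime precisely which scalings $\mathrm{diag}(1,q^{a})$ can conjugate a Cartan or Cartan-plus congruence group back to itself, and verifying that the congruence adjustments in \eqref{eq:ugly_matrix_1} and \eqref{eq:ugly_matrix_2} are exactly what is required to assemble the local $w_2$ into a genuine element of $\mathcal N$.
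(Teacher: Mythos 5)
Your framework is sound, and the two steps you actually carry out are correct: the identification $\mathcal N\cap\PP(\SLZ)=\PP(\Gamma_{N'})$ via reduction modulo $n$ (the paper uses the same fact, stated as ``the normalizer of $\Gamma_H$ inside $\tx{SL}_2(\Z)$ is $\Gamma_{N'}$''), and the divisibility $d\mid n$ for a primitive integral lift $M$, obtained by conjugating $\smt{1}{n}{0}{1}$ and $\smt{1}{0}{n}{1}$ and using that the gcd of the squares of the entries is $1$. The reduction of the proposition to your claims (a) and (b) via the determinant homomorphism is also fine. The problem is that (a) and (b) are exactly equivalent to the ``local rigidity'' statement that you announce but never prove, and that statement \emph{is} the content of the proposition: which primes can divide $\det(M)$, and why an odd prime, a non-split prime, a plain (non-plus) Cartan at $2$, or a split Cartan-plus at $2$ with $4\mid n$ are all impossible. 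Everything you prove is the routine part; the heart, which you yourself flag as ``the main obstacle,'' is missing. This is a genuine gap.

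Moreover, the route you sketch for closing it has two concrete difficulties. First, the prime-by-prime localization is not available for free: conjugation by $M$ normalizes $\Gamma_H=\bigcap_{p\mid n}\Gamma_{H_p}$, but nothing you have said implies that it carries each local group $\Gamma_{H_q}$ to itself, so ``the $q$-component of the conjugation'' is not yet a defined object. Second, in the Smith factorization $M=U\,\mathrm{diag}(1,d)\,V$ the matrices $U,V\in\GL(\Z)$ need not normalize $\Gamma_H$; what you obtain is that $\mathrm{diag}(1,d)$ conjugates $V\Gamma_HV^{-1}$ onto $U^{-1}\Gamma_HU$, two generally different congruence subgroups (attached to conjugate Cartans), so the question does not reduce to whether $\mathrm{diag}(1,d)$ normalizes $\Gamma_H$ itself ``modulo the integral part.'' For comparison, the paper closes this step with one further idea your proposal lacks: for a prime $p\mid\det(M)$, the lattice $\Lambda\subset\Z^2$ spanned by the columns of $M$ satisfies $\Gamma_H\Lambda=\Lambda$, hence its image in $\F_p^2$ is a nonzero proper $\ol{\Gamma}_H$-invariant subspace, i.e.\ a line; thus the reduction of $\Gamma_H$ modulo $p$ lies in a Borel subgroup, which forces $p\mid n$ and leaves only the configurations ``$H$ split Cartan at $p$'' or ``$p=2$ and $H$ Cartan-plus split at $2$.'' Short explicit conjugation computations then eliminate the first case and, in the second, show $\det(M)=2$, $n\equiv 2\bmod 4$ and $(\gamma_1\gamma_0)^{-1}M\in\SLZ$. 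Some such argument --- this one or a genuine substitute --- is what your proposal still needs before it is a proof.
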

\begin{proof}
Let $\tilde{\mathcal N}< \GL^+(\Q)$ be the normalizer of $\Q^\times \Gamma_H$, or, equivalently, the normalizer of $\Gamma_H$ (each matrix normalizing $\Q^\times \Gamma_H$ also normalizes $(\Q^\times \Gamma_H)\cap \tx{SL}_2(\Q) = \Gamma_H$, and since scalar matrices commute with everything, each matrix normalizing $\Gamma_H$ also normalizes $\Q^\times \Gamma_H$). The conclusion of the proposition is equivalent to 
\[
\tilde{\mathcal N}= \Q^\times \Gamma_{N'} \quad \tx{or}  \quad  \tilde{\mathcal N}= \Q^\times \langle \gamma_1\gamma_0 ,\Gamma_{N'} \rangle ,
\]
depending on the case. The inclusions $\supseteq$ are trivial, hence we prove the other inclusions. 
Since the normalizer of $\Gamma_H$ inside $\tx{SL}_2(\Z)$ is $\Gamma_{N'}$, it is enough to show that 
\[
\tilde{\mathcal N}\subseteq \Q^\times \text{SL}_2(\Z) \quad \tx{or} \quad \tilde{\mathcal N}\subseteq \Q^\times \text{SL}_2(\Z) \cup \gamma_1\gamma_0 \Q^\times \text{SL}_2(\Z),
\] 
depending on the case. We suppose that $\tilde{\mathcal N}$ contains a matrix $m = \smt{a}{b}{c}{d}$ not lying in $\Q^\times \text{SL}_2(\Z)$: it is enough to prove, with this assumption, that $n \equiv 2 \bmod 4$ and $H$ is a Cartan-plus subgroup split at $2$ and $m\in \gamma_1\gamma_0 \Q^\times \text{SL}_2(\Z)$.

Up to multiplication by a scalar matrix, we can suppose that $a,b,c,d\in\Z$ and that $\gcd(a,b,c,d)=1$. Since $m\notin \Q^\times \text{SL}_2(\Z)$, then $\det(m) \neq 1$. Let $p$ be a prime dividing $\det(m)$, let $\lambda_1=\left( \begin{smallmatrix}	a \\ c \end{smallmatrix}\right), \lambda_2=\left( \begin{smallmatrix} b \\ d \end{smallmatrix}\right) \in \Z^2$ and let $\Lambda \subset \Z^2$ be the lattice generated by $\lambda_1, \lambda_2$. By definition of $\tilde{\mathcal N}$, for every $\gamma \in \Gamma_H$ there is $\gamma' = \smt{x}{y}{z}{w}\in\Gamma_H$ such that $\gamma m = m \gamma'$. Hence, looking at the columns of $\gamma m$, we get $\gamma \lambda_1 = x\lambda_1{+}z\lambda_2 $ and $\gamma \lambda_2 =  y\lambda_1{+}w\lambda_2$.
	Since $\gamma$ is arbitrary and $\gamma' \in \tx{SL}_2(\Z)$, we have
	\begin{equation*} \label{eq:Lambda_m_invariant}
	\Gamma_H \Lambda = \Lambda.
	\end{equation*}
Let $\ol \Lambda$ be the image of $\Lambda$ under the quotient map $\Z^2 \to \F_p^2$. Since at least one of $a,b,c,d$ is not multiple of $p$, we know that $\ol \Lambda \neq \{ 0\}$ and since $\det(m)$ is multiple of $p$, we know that $\ol \Lambda \neq \F_p^2$. Hence $\ol \Lambda$ is a line inside $\F_p^2$ which is left invariant by every matrix in the image $\ol{\Gamma}_H$ of $\Gamma_H$ in $\GL(\F_p)$. This implies that $\ol{\Gamma}_H$ is contained in a Borel subgroup of $\GL(\F_p)$, thus $p$ divides the level $n$ and $\ol{\Gamma}_H = \ol H^T {\cap} \tx{SL}_2(\Z/p\Z)$, where $\ol H$ is the image of $H$ in $\GL(\F_p)$. We deduce that either $H$ is a Cartan group split at $p$ or $p=2$ and $H$ is a Cartan-plus group split at $p$.

First we suppose that $H$ is a Cartan group split at $p$. Let $p^e$ be the maximum power of $p$ dividing $n$. Up to conjugacy, the image of $H$ in $\GL(\Z/p^e\Z)$ is $\{\smt{*}{0}{0}{*}\}$, hence $m^{-1}\gamma m \equiv \smt{*}{0}{0}{*} \pmod{p^e}$, as $\gamma\in\Gamma_H$. Applying this to $\gamma = \smt{1}{n}{0}{1}$ and $\gamma = \smt{1}{0}{n}{1}$, we see, since $\det(m)$ is a multiple of $p$, that $p\mid a,b,c,d$, which is a contradiction.

This contradiction implies that the only prime dividing $\det(m)$ is $2$ and $H$ is a Cartan-plus group split at $2$. Let $2^e$ be the maximum power of $2$ dividing $n$. Up to conjugacy, the image of $H$ in $\GL(\Z/2^e\Z)$ is $\{\smt{*}{0}{0}{*},\smt 0**0 \}$. In particular the image of $H$ in $\GL(\Z/2\Z)$ is $\{\smt{1}{0}{0}{1}, \smt{0}{1}{1}{0}\}$, hence $\ol{\Lambda} = \langle\left(\begin{smallmatrix}1\\1\end{smallmatrix}\right)\rangle$ is the only $\ol{\Gamma}_H$-invariant line. With a similar argument we see that the rows $(a\,b), (c\,d)$ of $m$ span $\langle(1\,1)\rangle$ in $\F_2^2$. Hence $m \equiv \smt 1111 \pmod 2$. For every $\gamma\in\Gamma_H$, we have
\begin{equation}\label{eq:}
m^{-1}\gamma m \!\!\!\! \pmod{2^e} \in \left\{\smt{*}{0}{0}{*},\smt 0**0\right\}.
\end{equation}
When $\gamma = \smt{1}{n}{0}{1}$, we see that $m^{-1}\gamma m \equiv \smt*00* \pmod{2^e}$ is not possible because both $c$ and $d$ are odd, hence $m^{-1}\gamma m \equiv \smt 0**0 \pmod{2^e}$ and, by explicit computations, we deduce that $\det(m)=2$ and $n \equiv 2 \bmod 4$. Finally, since $m \equiv \smt 1111 \pmod 2$ and $\det(m)=2$, we see that $(\gamma_1\gamma_0)^{-1}m\in \tx{SL}_2(\Z)$.
\end{proof}

We now prove the main results of this paper.

\begin{thm}\label{thm:Cartan_aut}
Let $n\ge 10^{400}$ be an integer and let $H<\GLn$ be either a Cartan or a Cartan-plus subgroup. Then every automorphism of $X_H$ is modular, hence we have
\begin{align*}
\Aut(X_H) &\cong \begin{cases}  N'/H' \times \Z/2\Z, & \tx{if } n\equiv 2 \bmod 4 \tx{ and $H$ is a Cartan-plus split at }2, \\
N'/H', & \tx{otherwise,}
\end{cases}
\end{align*}
where $N'<\mathrm{SL}_2(\Z/n\Z)$ is the normalizer of $H':=H\cap\tx{SL}_2(\Z/n\Z)$.
\end{thm}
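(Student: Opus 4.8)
The plan is to combine the modularity criterion of \Cref{lem:aut_general_H} with the description of the normalizer in \Cref{prop:Cartan_normalizer_in_GL2Q}. Since $H$ is Cartan or Cartan-plus it contains $A^\times$, hence all scalar matrices, and $\det H=(\Z/n\Z)^\times$ because the norm map of the rank-$2$ étale algebra $A$ is surjective; thus the hypotheses of both \Cref{lem:aut_general_H} and \Cref{prop:Cartan_normalizer_in_GL2Q} hold. First I would invoke \Cref{prop:field_Cartan(n)}: as $n\ge 10^{400}$, every automorphism of $X_H$ is defined over a compositum of quadratic fields. To feed this into \Cref{lem:aut_general_H} I must exhibit a prime $\ell$ with $5\le \ell<\tfrac12\tx{gon}(X_H)-1$. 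Here I would use Abramovich's linear lower bound on the gonality of a congruence modular curve, $\tx{gon}(X_H)\ge c\,d(\Gamma_H)$ for an absolute $c>0$, together with the index estimate $d(\Gamma_H)\ge \tfrac{n\phi(n)}{2^{\omega(n)}}$ from the proof of \Cref{prop:genus_bounds}; for $n\ge 10^{400}$ this makes $\tx{gon}(X_H)$ astronomically large, whereas the least prime $\ell\ge 5$ not dividing $n$ is only $O(\log n)$ and so lies well below $\tfrac12\tx{gon}(X_H)-1$. Hence \Cref{lem:aut_general_H} gives that every automorphism of $X_H$ is modular, and therefore $\Aut(X_H)\cong \mathcal N/\PP(\Gamma_H)$, where $\mathcal N$ is the normalizer of $\PP(\Gamma_H)$ in $\tx{PGL}_2^+(\Q)$.

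It remains to compute $\mathcal N/\PP(\Gamma_H)$ using \Cref{prop:Cartan_normalizer_in_GL2Q}. In the generic case $\mathcal N=\PP(\Gamma_{N'})$. Since $H$ contains $-\tx{Id}$ we have $-\tx{Id}\in\Gamma_H$, so $\PP(\Gamma_{N'})/\PP(\Gamma_H)\cong \Gamma_{N'}/\Gamma_H$. Reduction modulo $n$ is a surjection with kernel $\Gamma(n)\subseteq\Gamma_H$ that carries $\Gamma_H$ onto $(H')^T$ and $\Gamma_{N'}$ onto $(N')^T$, whence $\Gamma_{N'}/\Gamma_H\cong (N')^T/(H')^T$. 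As transposition is an anti-automorphism of $\tx{SL}_2(\Z/n\Z)$ sending the pair $H'\trianglelefteq N'$ to $(H')^T\trianglelefteq (N')^T$, and a finite group is isomorphic to its opposite, I conclude $\Aut(X_H)\cong N'/H'$, as claimed.

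In the remaining case — $n\equiv 2\bmod 4$ and $H$ a Cartan-plus split at $2$ — \Cref{prop:Cartan_normalizer_in_GL2Q} gives $\mathcal N=\langle\PP(\Gamma_{N'}),w\rangle$ with $w:=\PP(\gamma_1\gamma_0)$, $w^2\in\PP(\Gamma_H)$ and $w\notin\PP(\Gamma_{N'})$ (because $\det(\gamma_1\gamma_0)=2$). Thus $\bar w$ is an involution meeting the subgroup $\PP(\Gamma_{N'})/\PP(\Gamma_H)\cong N'/H'$ trivially, and the whole point is to prove that $\bar w$ is \emph{central}: then $\mathcal N/\PP(\Gamma_H)=(N'/H')\times\langle\bar w\rangle\cong N'/H'\times\Z/2\Z$. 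Writing $n=2n'$ with $n'$ odd and splitting by CRT as $\tx{SL}_2(\Z/n\Z)=\tx{SL}_2(\Z/2\Z)\times\tx{SL}_2(\Z/n'\Z)$, one has $N'/H'\cong (N'_2/H'_2)\times(N'_{n'}/H'_{n'})$; the split Cartan-plus $H_2=\{\tx{Id},\smt0110\}$ is self-normalising in $\tx{SL}_2(\Z/2\Z)\cong S_3$, so $N'_2=H'_2$ and the factor $N'_2/H'_2$ is trivial. Consequently every class in $\Gamma_{N'}/\Gamma_H$ has a representative $\delta\equiv\tx{Id}\bmod 2$; for such $\delta$ the matrix $\gamma_1\gamma_0\,\delta\,(\gamma_1\gamma_0)^{-1}$ is again integral (this is where $\delta\equiv\tx{Id}\bmod 2$ circumvents the non-invertibility of $\det(\gamma_1\gamma_0)=2$ at the prime $2$), and under the reduction-transpose identification its class corresponds to the conjugate of $\delta^T$ by the element $(\gamma_1\gamma_0)^T\bmod n'\in H_{n'}$ from (\ref{eq:ugly_matrix_2}). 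It therefore remains to verify that conjugation by an element of the Cartan-plus group $H_{n'}$ acts trivially on $N'_{n'}/H'_{n'}$, which can be checked directly from the explicit descriptions of Cartan and Cartan-plus subgroups and their $\tx{SL}_2$-normalisers given in \Cref{Sec:Cartan}. This centrality — forcing the extension to split as a direct product — is the technical heart of the argument and its main obstacle, the bookkeeping being complicated by the transpose convention linking $\Gamma_H$ to $H'$ and by the need to treat the prime $2$ apart from the odd part.
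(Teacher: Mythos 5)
Your first half (modularity via \Cref{prop:field_Cartan(n)}, a gonality lower bound, and \Cref{lem:aut_general_H}) and your treatment of the generic case follow the paper's proof essentially verbatim and are fine. The genuine gap is in the exceptional case, exactly at the step you call the technical heart: the claim that conjugation by an element of $H_{n'}$ acts trivially on $N'_{n'}/H'_{n'}$ is \emph{false}. Your reduction up to that point is correct, and it even shows the action is read off a determinant: $H'_{n'}$ acts trivially on the abelian quotient $N'_{n'}/H'_{n'}$ (abelian by \Cref{lem:N'}), and $\det\colon H_{n'}/H'_{n'}\to(\Z/n'\Z)^\times$ is an isomorphism, so the action of $h\in H_{n'}$ depends only on $\det h$; for $h=(\gamma_1\gamma_0)^T\bmod n'$ this determinant is forced to equal $2$. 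Now suppose $H_{n'}$ has the local factor $C_\tx{s}^+(5)$, which by \Cref{lem:N'} and Table \ref{table2} contributes $N'_5/H'_5\cong\Z/3\Z$ with generator $g=\smt1213$. Taking $h_0=\smt2001\in C_\tx{s}^+(5)$, of determinant $2$, one computes in $\mathrm{GL}_2(\F_5)$
\[
h_0\,g\,h_0^{-1}=\smt1433,\qquad g\cdot\smt1433=\smt2003\in H'_5,
\]
so $h_0\,g\,h_0^{-1}\in g^{-1}H'_5$: every determinant-$2$ element of $C_\tx{s}^+(5)$ acts on this $\Z/3\Z$ by \emph{inversion}, not trivially. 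The same inversion occurs for $C_\ns^+(3)$ (there $H'\cong Q_8$ is normal in $\mathrm{GL}_2(\F_3)$ with non-abelian quotient $S_3$) and for $C_\tx{s}^+(3^e)$, $e\ge 2$. Since $2$ is a non-square modulo $3$ and modulo $5$, no admissible choice of $\gamma_0,\gamma_1$ can repair this.

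Consequently, if $n\equiv2\bmod4$, $H$ is a Cartan-plus split at $2$, and the odd part of $n$ carries a factor $C_\ns^+(3)$, $C_\tx{s}^+(3^e)$ with $e\ge2$, or $C_\tx{s}^+(5)$ --- e.g.\ $n=10m$ with $H$ split at $2$ and $5$ and non-split at every prime dividing $m$ --- then the class of $\PP(\gamma_1\gamma_0)$ inverts the corresponding $\Z/3\Z$ inside $N'/H'$, and $\mathcal N/\PP(\Gamma_H)$ is the non-abelian group $(N'/H')\rtimes\Z/2\Z$ (equal to $S_3$ in the example), not $N'/H'\times\Z/2\Z$. So your concluding verification cannot succeed as stated; centrality holds only when no such $\Z/3\Z$ factor is present (for instance when every local quotient $N'_q/H'_q$ has exponent dividing $2$, where it is automatic). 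Be aware that the paper's own proof of \Cref{thm:Cartan_aut} dismisses this very point in one unproved sentence --- that $\PP(\gamma_1\gamma_0\Gamma_H)$ ``commutes with every element in $\PP(\Gamma_{N'})/\PP(\Gamma_H)$'' --- and your (correct) reduction shows that sentence fails in the cases above. In other words, what you have hit is not an oversight in your own write-up that more computation would fix, but a gap in the paper's proof, and indeed in the stated direct-product formula of the theorem, for those levels; the modularity assertion and the non-exceptional case are unaffected.
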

\begin{proof}

Let $\mathcal N$ be the normalizer of $\PP(\Gamma_H)$ inside $\mathrm{PGL}_2^+(\Q)$. By Proposition \ref{prop:Cartan_normalizer_in_GL2Q}
we have
\[
\mathcal N/\PP(\Gamma_H) {\cong} \begin{cases} \PP(\Gamma_{N'})/\PP(\Gamma_H) {\times} \Z/2\Z, & \tx{if } n\equiv 2 \bmod 4 \tx{ and $H$ is a Cartan-plus split at }2, \\
\PP(\Gamma_{N'})/\PP(\Gamma_H), & \tx{otherwise},
\end{cases}
\]
where the first case is true because  $\mathbb{P}(\gamma_1\gamma_0 \Gamma_H)$ has order $2$ in $\mathcal N/\PP(\Gamma_H)$ and commutes with every element in $\PP(\Gamma_{N'})/\PP(\Gamma_H)$. Since $\PP(\Gamma_{N'})/\PP(\Gamma_H) \cong \PP(\Gamma_{N'})/\PP(\Gamma_{H'}) \cong N'/H'$, it is enough to prove that every automorphism of $X_H$ is modular. For $n\ge 10^{400}$ every automorphism is defined over the compositum of some quadratic fields by Proposition \ref{prop:field_Cartan(n)}. We can bound the gonality $\tx{gon}(X_H)$ of $X_H$ using \cite{AbrGonality} and, with the same estimates used in the proof of Proposition \ref{prop:genus_bounds}, we have
	\[ 
	\tx{gon}(X_H) \geq \frac{7}{800}[\tx{SL}_2(\Z):\Gamma_H] \geq \frac{7 n^2}{800 (\omega(n){+}1)2^{\omega(n)}} > 10 n.
	\]
	So, there is at least one prime $\ell$ not dividing $n$ with $5\leq \ell < \tfrac{1}{2}\tx{gon}(X_H){-}1$. By Lemma~\ref{lem:aut_general_H}, we can conclude that every automorphism is modular.
\end{proof}

\begin{rem}\label{rem:N'}
One can determine the groups $N'/H'$ in all cases. Indeed, let $n=\prod_{i=1}^r p_i^{e_i}$ be any positive integer with its prime factorization, let $H < \GLn$ be either a Cartan or a Cartan-plus subgroup and let $N'<\mathrm{SL}_2(\Z/n\Z)$ be the normalizer of the group $H':= H \cap \mathrm{SL}_2(\Z/n\Z)$. By Chinese Remainder Theorem we have 
\[
H' \cong \prod_{i=1}^r H'_i \quad \tx{and} \quad N' \cong \prod_{i=1}^r N'_i \quad \tx{inside}  \quad \mathrm{SL}_2(\Z/n\Z)\cong \prod_{i=1}^r \mathrm{SL}_2(\Z/p^{e_i}\Z),
\]
where $H_i'$ is the image of $H'$ in $\mathrm{SL}_2(\Z/p^{e_i}\Z) $ and $N'_i < \mathrm{SL}_2(\Z/p^{e_i}\Z)$ is the normalizer of~$H_i'$. Hence the knowledge of $N'/H'$ for $H \in \{C_\ns(p^e), C_\ns(p^e), C_\tx{s}(p^e), C_\tx{s}^+(p^e)\}$ allows to compute the group $N'/H'$ for every Cartan or Cartan-plus subgroup $H$ of level $n$ not necessarily a prime power. For the prime power cases see \Cref{lem:N'} and Table \ref{table2} below.
\end{rem}
\begin{lem}\label{lem:N'}
Let $e$ be a positive integer and let $p$ be a prime. Let $H < \mathrm{GL}_2(\Z/p^e\Z)$ be either a Cartan or a Cartan-plus subgroup and let $N'<\mathrm{SL}_2(\Z/p^e\Z)$ be the normalizer of the group $H':= H \cap \mathrm{SL}_2(\Z/p^e\Z)$. Then:
\begin{itemize}
\item if $H = C_\ns(p^e)$, then $N'/H'\cong \Z/2\Z$, since $N'= C_\ns^+(p^e)\cap \SLpe$;
\item if $H = C_\ns^+(p^e)$ and $p^e \neq 3$, then $N'/H'\cong \{1\}$;
\item if $H = C_\tx{s}(p^e)$ and $p\neq 2,3$, then $N'/H'\cong\langle \smt 0{-1}10\rangle\cong \Z/2\Z$;
\item if $H=C_\tx{s}^+(p^e)$ and $p\neq 2,3$ and $p^e \neq 5$, then $N'/H'\cong \{1\}$.
\end{itemize}
The cases left out are listed in Table \ref{table2} below.
\end{lem}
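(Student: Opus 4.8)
\emph{Setup and the lower bounds.} Throughout write $C$ for the relevant Cartan subgroup, $C^+$ for the Cartan-plus subgroup containing it, and $H'=H\cap\SLpe$. The plan is to first produce the ``expected'' normalizing elements by hand, and then show that nothing else occurs. For $H=C_\ns(p^e)$ the nontrivial coset $\{\smt{a}{b\xi}{-b}{-a}\}$ of $C_\ns^+(p^e)$ over $C_\ns(p^e)$ meets $\SLpe$ — its determinant is $-(a^2-b^2\xi)$, and $-1$ is a norm of the associated algebra — and its elements normalize $H'$; for $H=C_{\tx s}(p^e)$ the matrix $\smt{0}{-1}{1}{0}$ normalizes $H'$ and lies in $C_{\tx s}^+(p^e)\cap\SLpe$. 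In both cases this gives $N'\supseteq(C^+)'$ with $(C^+)'/H'\cong\Z/2\Z$; for the two Cartan-plus groups the corresponding lower bound $N'\supseteq H'$ is trivial. The whole content of the lemma is thus the matching \emph{upper} bounds, and the excluded small levels will enter precisely there.

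\emph{The Cartan cases.} The plan is to exploit that an element normalizing a subgroup also normalizes its centralizer, so that
\[
N'=N_{\SLpe}(H')\subseteq N_{\SLpe}\big(\mathrm{Cent}_{\GL(\Z/p^e\Z)}(H')\big).
\]
The key point is that $\mathrm{Cent}_{\GL(\Z/p^e\Z)}(H')=C$ as soon as $H'$ contains a \emph{regular} element, i.e.\ one whose reduction modulo $p$ has two distinct eigenvalues over $\ol{\F_p}$: for such a $t$ the algebra $\Z/p^e\Z[t]$ is étale of rank $2$ and its unit group is exactly $C$. For a non-split Cartan this holds for \emph{every} $p$, since an anisotropic generator has eigenvalues in $\F_{p^2}\setminus\F_p$; for a split Cartan it holds exactly when $(\Z/p\Z)^\times$ has an element of order $>2$, i.e.\ when $p\ge 5$, which is why $p=2,3$ are excluded there (for $p=2,3$ every $\smt{a}{0}{0}{a^{-1}}$ is $\equiv\pm\mathrm{Id}\bmod p$). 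Granting this, $N'\subseteq N_{\GL(\Z/p^e\Z)}(C)\cap\SLpe$, and for odd $p$ the normalizer of a Cartan subgroup is the Cartan-plus (as recalled in Section~\ref{Sec:Cartan}); hence $N'\subseteq(C^+)'$, equality holds by the lower bound, and $N'/H'\cong\Z/2\Z$ with the stated generator. The case $p=2$ for $C_\ns$ follows the same centralizer reduction, after which $N_{\SLpe}(C_\ns(2^e))$ is computed directly in the explicit model $\smt{a}{b}{b}{a+b}$ of Section~\ref{Sec:Cartan}; here the $\GL$-normalizer of $C_\ns(2^e)$ is strictly larger than $C_\ns^+(2^e)$, but its intersection with $\SLpe$ is still $(C_\ns^+)'$.

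\emph{The Cartan-plus cases.} Now $\mathrm{Cent}_{\GL(\Z/p^e\Z)}(H')$ is only the scalars, so the previous trick is vacuous; instead the plan is to reduce to the Cartan case by showing that the cyclic subgroup $C'\le(C^+)'$ is \emph{characteristic}. The group $(C^+)'$ is generalized dihedral — an extension of the cyclic group $C'$ by an inversion — and when $|C'|$ is large enough $C'$ is the unique cyclic subgroup of its index, hence characteristic; any $g$ normalizing $(C^+)'$ then normalizes $C'$, so $g\in N_{\SLpe}(C')=(C^+)'$ by the Cartan case, forcing $N'=H'$. This characteristic property fails exactly when $(C^+)'$ has order $8$ and is isomorphic to the quaternion group $Q_8$ (all non-central elements of order $4$, a single involution $-\mathrm{Id}$), where the cyclic order-$4$ subgroup is not characteristic. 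A short check identifies this situation with $C_{\tx s}^+(5)$ and $C_\ns^+(3)$ — exactly the two exceptions in the statement — so that in all other cases (with $p\ne 2,3$ in the split setting) one gets $N'/H'\cong\{1\}$.

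\emph{Remaining cases and the main difficulty.} The finitely many excluded levels ($p\in\{2,3\}$ for the split groups, together with the $Q_8$-coincidences $C_{\tx s}^+(5)$ and $C_\ns^+(3)$) are to be handled by direct computation and recorded in Table~\ref{table2}. I expect the main obstacle to be making the two structural steps uniform in the exponent $e$: the regular-element argument must genuinely be run over $\Z/p^e\Z$ rather than only over $\F_p$, so that no accidental normalizing elements survive at higher level — this is exactly what distinguishes $C_{\tx s}^+(5)$ from $C_{\tx s}^+(5^e)$ with $e\ge2$ — and the characteristic-subgroup step must be checked for the actual finite group $(C^+)'$, not merely its reduction modulo $p$. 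Correctly isolating the $Q_8$ coincidence as the sole source of exceptions in the plus case, and coping with the strictly larger $\GL$-normalizer at $p=2$, are the delicate points.
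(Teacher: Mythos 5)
Your two-step architecture (centralizer trick for the Cartan cases, then reduction to them via a characteristic-subgroup argument for the Cartan-plus cases) is a genuine argument where the paper offers none — its entire proof is ``It is a direct computation'' — and the Cartan half is correct as written: the regular-element criterion for $\mathrm{Cent}_{\GL(\Z/p^e\Z)}(H')=C$, the existence of regular elements in $H'$ for all non-split levels and for split levels with $p\ge 5$, and the appeal to ``normalizer of Cartan $=$ Cartan-plus'' for odd $p$ all hold up. The gap is in the Cartan-plus half. You describe $(C^+)'$ as an extension of \emph{the cyclic group} $C'$ by an inversion and deduce that $C'$ is characteristic as the unique cyclic subgroup of its index. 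But $C'=C\cap\SLpe$ is \emph{not} cyclic for $H=C_\ns^+(2^e)$ with $e\ge 3$, a family the lemma must cover (the only non-split exclusion is $p^e=3$): the scalar matrices $a\,\mathrm{Id}$ with $a\in\{\pm1,\,2^{e-1}\pm1\}$ satisfy $a^2\equiv 1\bmod 2^e$, hence all four lie in $C'$ and form a Klein four-group (for $e=3$ one checks $C'\cong\Z/6\Z\times\Z/2\Z$). For these infinitely many levels your uniqueness claim is vacuous and the proof establishes nothing. The repair is to show $C'$ is the unique \emph{abelian} subgroup of index $2$ in $(C^+)'$: any other index-$2$ subgroup meets $C'$ in an index-$2$ subgroup $K$ and contains an element of the nontrivial coset, which acts on $K$ by inversion, so it is abelian only if $K$ has exponent at most $2$; for non-split $p=2$ every such $K$ has order divisible by $3$ and thus contains an element of order $3$, while in all remaining cases $C'$ is cyclic, so $K$ has exponent $\le 2$ exactly when $|C'|\le 4$, i.e.\ for $C_\ns^+(3)$ and $C_\tx{s}^+(5)$ — precisely your $Q_8$ exceptions. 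With this substitution your argument becomes complete and uniform in $e$.

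Two further inaccuracies are worth flagging. First, your claim that the $\GL$-normalizer of $C_\ns(2^e)$ is strictly larger than $C_\ns^+(2^e)$ is false: for the non-split algebra $A$ over $\Z/2^e\Z$ the units span $A$ (take $1$ and a lift of a generator of $\F_4^\times$), so any normalizing element preserves $A=\mathrm{span}(C_\ns(2^e))$ and acts on it as a ring automorphism; since $\Aut_{\mathrm{Ring}}(A)\cong\Z/2\Z$ and the centralizer of $A$ in $\GL(\Z/2^e\Z)$ is $A^\times$, the normalizer equals $C_\ns^+(2^e)$. The strictly-larger phenomenon, noted in the paper's remark in Section~\ref{Sec:Cartan}, is specific to the \emph{split} case at $2$, where the units of $A\cong R\times R$ do not span. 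This error is harmless (your proposed direct computation would simply come out smaller than you expect), but the statement should be corrected. Second, ``generalized dihedral'' is not the right structure: every element of the nontrivial coset of $(C^+)'$ squares to $-\mathrm{Id}$, so these groups are dicyclic (binary dihedral), not semidirect products. This is not pedantry — a genuinely dihedral group of order $8$ is $D_4$, in which the cyclic subgroup of order $4$ \emph{is} characteristic, so under your stated structure there would be no exceptions at all; it is exactly the dicyclic structure that produces $Q_8$ and hence the excluded levels $C_\ns^+(3)$ and $C_\tx{s}^+(5)$. Your description of $Q_8$ shows you used the correct structure, but the justification in the write-up should match it.
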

\begin{proof}
It is a direct computation.
\end{proof}
\begin{table}[h!]  
\renewcommand*{\arraystretch}{1.2}
\caption{Automorphism groups.}
\begin{center}
\begin{tabular}{c|c|c|c}
	\toprule
	$H$  & $N'/H'$ & Generators & Comments
	\\ \midrule\midrule 
	$C_\ns^+(3)$  & $\Z/3\Z$ & $\smt1101$  & \\
	\hline 
	$C_\tx{s}(2^e)$ & \multirow{2}{*}{$\mathrm{PSL}_2(\Z/2^e\Z)$} & \multirow{2}{*}{$\mathrm{SL}_2(\Z/2^e\Z)$} & \\
	$1{\le} e{\le} 3$ &  &  &  \\
	\hline 
	\multirow{4}{*}{$C_\tx{s}(2^4)$}  & $D_8 \rtimes_\varphi (\Z/8\Z)$  &  \multirow{2}{*}{$\smt{-1}{6}{6}{-5}\!,\!\smt{4}{9}{7}{-4}$ for $D_8$}  & $D_8\cong \Z/8\Z\rtimes\Z/2\Z$ is the  \\
	 & with  &   &  dihedral group of order $16$; \\
	 & $\left(\varphi(1)\right)(1,0)=(5,0)$ & \multirow{2}{*}{and $\smt {1}{-2}{0}{1}$ for $\Z/8\Z$} &  $N'/H'$ is the group labeled as  \\
	 & $\left(\varphi(1)\right)(0,1)=(3,1)$ &  &   $(128,68)$ in MAGMA, \cite{GroupNames}  \\
	\hline 
	$C_\tx{s}(2^e)$ & $(\Z/8\Z)^2 \rtimes_\varphi (\Z/2\Z)$ & $\smt{1}{2^{e-3}}{0}{1}\!,\!\smt{1}{0}{-2^{e-3}}{1}$ for $(\Z/8\Z)^2 $ & this group is labeled as \\
	 $e\ge 5$  & with $\left(\varphi(1)\right)(x,y){=}(y,x)$ & and $\smt {0}{-1}{1}{0}$ for $\Z/2\Z$ &  $(128,67)$ in MAGMA, \cite{GroupNames} \\
	\hline 
	$C_\tx{s}(3)$ & $\mathrm{PSL}_2(\Z/3\Z)$ & $\mathrm{SL}_2(\Z/3\Z)$  &\\
	\hline 
	$C_\tx{s}(3^e)$  & \multirow{2}{*}{$\Z/3\Z \times S_3$} & $\smt{1}{3^{e-1}}{-3^{e-1}}{1}$ for $\Z/3\Z$ & $S_3$ is the symmetric group  \\
	$e\ge 2$  & & and $\smt {0}{-1}{1}{0}\!,\!\smt{1}{3^{e-1}}{3^{e-1}}{1}$ for $S_3$ & acting on three elements \\
	\hline 
	$C_\tx{s}^+(2)$  & $\{1\}$ &   & \\
	\hline 
	$C_\tx{s}^+(2^2)$  & $\Z/2\Z$ & $\smt{1}{2}{2}{1}$ & \\
	\hline 
	$C_\tx{s}^+(2^3)$  & $\Z/4\Z$ & $\smt{1}{-2}{2}{-3}$ & \\
	\hline 
	$C_\tx{s}^+(2^4)$  & $\Z/8\Z$ & $\smt{1}{6}{2}{-3}$ & \\
	\hline 
	$C_\tx{s}^+(2^5)$  & $\Z/8\Z$ & $\smt{1}{-4}{4}{-15}$ & \\ 
	\hline 
	$C_\tx{s}^+(2^e)$  & \multirow{2}{*}{$\Z/8\Z$} & \multirow{2}{*}{$\smt {1}{-2^{e-3}}{2^{e-3}}{1}$} & \\ 
	$e\ge 6$  &  & & \\ 
	\hline 
	$C_\tx{s}^+(3)$  & $\Z/2\Z$ & $\smt {1}{1}{1}{-1}$ & \\ 
	\hline 
	$C_\tx{s}^+(3^e)$ & \multirow{2}{*}{$\Z/3\Z$} & \multirow{2}{*}{$\smt {1}{-3^{e-1}}{3^{e-1}}{1}$} & \\ 
	$e\ge 2$  &  &  & \\ 
	\hline 
	$C_\tx{s}^+(5)$  & $\Z/3\Z$ & $\smt 1213$ & \\ 
	\bottomrule
\end{tabular}
\end{center}
\label{table2}
\end{table}
Note that the groups $N'/H'$ computed for $H=C_\tx{s}(p^e)$ are the same determined in \cite{AL}, \cite{ASNormGamma0}, \cite{BarsNormGamma0}, in the setting of Borel modular curves.

For Cartan modular curves of prime power level we make \Cref{thm:Cartan_aut} more precise.

\begin{thm}\label{thm:powerprimes}
Let $p$ be a prime number and let $e$ be a positive integer. If $p^e>11$ and $p^e\notin\{3^3, 2^4, 2^5, 2^6\}$, then all the automorphisms of $X_\ns(p^e), X_\ns^+(p^e), X_\tx{s}(p^e)$ and $X_\tx{s}^+(p^e)$ are modular and
\[
\begin{aligned}
 \Aut(X_\ns(p^e)) &\cong \Z/2\Z, &
 \Aut(X_\ns^+(p^e)) &\cong \{1 \}, \\
 \Aut(X_\tx{s}(p^e)) &\cong \begin{cases}
(\Z/8\Z)^2 \rtimes (\Z/2\Z), & \tx{ if }p=2, \\
\Z/3\Z\times S_3, & \tx{ if }p=3,  \\
\Z/2\Z, & \tx{ if }p>3, \end{cases} &
 \Aut(X_\tx{s}^+(p^e)) &\cong \begin{cases}
\Z/8\Z, & \tx{ if }p=2, \\
\Z/3\Z, & \tx{ if }p=3,  \\
\{1\}, & \tx{ if }p>3,
\end{cases}
\end{aligned}
\]
where the above semidirect product $(\Z/8\Z)^2 \rtimes \Z/2\Z$ is described in Table \ref{table2}.
\end{thm}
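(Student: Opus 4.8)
The plan is to follow the template of the proof of \Cref{thm:Cartan_aut}, feeding in the sharper prime-power inputs so that the enormous bound $10^{400}$ is replaced by an explicit, small threshold. First I would record that, using the genus formula together with Table \ref{table1}, in the stated range all four curves have genus at least $2$ (the genus-$\le 1$ cases all fall under $p^e\le 11$ or the excluded list), so that their automorphism groups are finite and \Cref{prop:field_Cartan(p^e)} applies. That proposition already gives, for \emph{every} such prime-power curve, that every automorphism is defined over the field $K_p$, which is a compositum of quadratic fields; thus, unlike in the composite case, the field-of-definition step costs us nothing here and imposes no lower bound on $p^e$.

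It then remains to prove modularity, and for this I would invoke \Cref{lem:aut_general_H}: it suffices to exhibit a prime $\ell\nmid n$ with $5\le \ell<\tfrac12\tx{gon}(X_H)-1$. Using the gonality bound $\tx{gon}(X_H)\ge \tfrac{7}{800}[\SLZ:\Gamma_H]$ from \cite{AbrGonality} together with the index estimate $[\SLZ:\Gamma_H]\ge \tfrac12 p^{2e}(1-\tfrac1p)$ read off from Table \ref{table1}, the gonality grows like $p^{2e}$, so for all but finitely many $p^e$ one of $\ell\in\{5,7,11,13\}$ (chosen coprime to $p$) satisfies the inequality and every automorphism is modular. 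Once modularity is known, $\Aut(X_H)\cong \mathcal N/\PP(\Gamma_H)$; since $p^e>11$ forces $n\not\equiv 2\bmod 4$ (as $2^e\equiv 2\bmod 4$ only for $e=1$), \Cref{prop:Cartan_normalizer_in_GL2Q} gives $\mathcal N=\PP(\Gamma_{N'})$, hence $\Aut(X_H)\cong N'/H'$, and reading off \Cref{lem:N'} and Table \ref{table2} produces exactly the four displayed groups.

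The hard part is the finitely many intermediate and small levels $11<p^e$ for which the gonality lower bound is too weak to feed \Cref{lem:aut_general_H}; these form an explicit finite list. For them I would instead apply the criterion of \cite{GonzConst}, supplemented by the ad hoc arguments implemented in the MAGMA script \cite{Script_aut}, to rule out non-modular automorphisms case by case. The cases $p^e\in\{3^3,2^4,2^5,2^6\}$ are excluded precisely because there even this refined approach fails: the curves $\Xns^+(27),\Xns(27),\Xns(16),\Xns^+(32),\Xns(32),\Xns^+(64)$ have gonality lower bounds too small and are not settled by \cite{GonzConst}. As a cross-check on the split cases I would note that $\Xs(p^e)\cong X_0(p^{2e})$ and $\Xs^+(p^e)\cong X_0^*(p^{2e})$, so those automorphism groups are already accessible through \cite{KM} and the classical theory of $X_0$; this both confirms the split columns of the table and isolates the genuinely new work in the non-split cases. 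The main obstacle throughout is thus the finite collection of small levels, where no single clean bound works and one must combine the geometric criterion of \cite{GonzConst} with explicit computation.
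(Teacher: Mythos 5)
Your proposal reproduces the paper's proof essentially step for step: field of definition over $K_p$ via Proposition \ref{prop:field_Cartan(p^e)}, modularity for large $p^e$ via the Abramovich gonality bound fed into Lemma \ref{lem:aut_general_H}, identification of the modular automorphism group via Proposition \ref{prop:Cartan_normalizer_in_GL2Q}, \Cref{lem:N'} and Table \ref{table2}, and the finite list of small levels $11<p^e\le 49$ settled by \cite{KM}, \cite{GonzSplit}, \cite{GonzConst} and the MAGMA script \cite{Script_aut}, exactly as in the paper. One small factual correction to your side remark: the exclusion of $2^4$ is not merely a failure of the method — the curve $\Xns^+(16)$ has genuinely exceptional automorphisms (its automorphism group is $\Z/2\Z\times\Z/2\Z$, see \Cref{rem:ns+16}), so the theorem's conclusion is actually false there, not just unproved.
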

\begin{proof}
We first treat the case $p^e> 49$ with $p^e\ne 2^6=64$. Up to conjugacy we can assume that $H \in \{C_\tx{s}(p^e), C_\tx{s}^+(p^e) , C_\ns(p^e), C_\ns^+(p^e) \}$ where these groups are the subgroups of $\GL(\Z/p^e\Z)$ defined in Chapter \ref{Sec:Cartan} and $X_H \in \{X_\ns(p^e), X_\ns^+(p^e), X_\tx{s}(p^e), X_\tx{s}^+(p^e) \}$ is the corresponding associated modular curve. By \cite[Theorem 0.1]{AbrGonality} and Table \ref{table1}, for $p^e>87$,  we have the following lower bounds for the gonality of $X_H$:
\[
\tx{gon}(X_H) \ge  \frac{7}{800}[\tx{SL}_2(\Z):\Gamma_H] \ge \frac{7}{800} \frac{p^{2e}(1-\tfrac1p)}{2}>\frac{7\cdot 87^2  }{3200} > 16.
\]
Hence either the prime $\ell=5$ or the prime $\ell=7$, are different from $p$, and satisfy $5\le \ell<\tfrac{1}{2}\tx{gon}(X_H)-1$. With a similar computation one can show that $\tx{gon}(X_H)>12$, for $49<p^e\le 87$, if $p^e\ne 64$ and we can take $\ell=5$. Applying Lemma \ref{lem:aut_general_H} we deduce that all the automorphisms of $X_H$ defined over a compositum of quadratic fields are modular, hence, by Proposition \ref{prop:field_Cartan(p^e)}, all the automorphisms of $X_H$ are modular. Finally, we can use Proposition \ref{prop:Cartan_normalizer_in_GL2Q},  \Cref{lem:N'} and \Cref{rem:N'} to obtain the group of modular automorphisms.

We now assume $11<p^e \le 49$. All the cases $X_\tx{s}(p^e) \cong X_0(p^{2e})$ are treated in \cite{KM}, all the cases $X_\tx{s}^+(p)$ are treated in \cite{GonzSplit} and the cases $X_\ns(p)$, $X_\ns^+(p)$, for $13 \le p \le 31$, are treated in \cite{GonzConst}. The remaining cases  $X_\tx{s}^+(25)$, $X_\tx{s}^+(49)$ and $X_\ns(p^e)$, $X_{\ns}^+(p^e)$, for $p^e = 25,37,41,43,47,49$, are treated in the MAGMA script available at \cite{Script_aut}.
\end{proof}

Last theorem can be specialized to the prime level case, obtaining new results for non-split Cartan curves. The split cases are treated in \cite{GonzSplit} and \cite{KM}.

\begin{cor}\label{cor:aut_prime_level}
Let $p \ge 13$ be a prime number. Then the group of automorphisms of $X_\ns^+(p)$ is trivial and the group of automorphisms of $X_\ns(p)$ has order $2$.
\end{cor}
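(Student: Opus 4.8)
The plan is to read off the statement as the special case $e=1$ of \Cref{thm:powerprimes}. First I would verify that the hypotheses of that theorem hold: it requires $p^e > 11$ and $p^e \notin \{3^3, 2^4, 2^5, 2^6\}$. For a prime $p \ge 13$ we have $p^e = p \ge 13 > 11$, and the exceptional levels $16, 27, 32, 64$ are all composite, so $p$ cannot coincide with any of them. Thus both conditions are automatically satisfied once we restrict to prime level $\ge 13$.

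With the hypotheses in place, \Cref{thm:powerprimes} gives directly that every automorphism of $X_\ns(p)$ and of $X_\ns^+(p)$ is modular, together with the isomorphisms $\Aut(X_\ns(p)) \cong \Z/2\Z$ and $\Aut(X_\ns^+(p)) \cong \{1\}$. This is exactly the assertion: the automorphism group of $X_\ns^+(p)$ is trivial, while that of $X_\ns(p)$ has order $2$.

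For transparency I would also recall where the group structure originates. Since $n = p$ is odd, we are never in the exceptional case $n \equiv 2 \bmod 4$ of \Cref{thm:Cartan_aut}, so that $\Aut(X_H) \cong N'/H'$ with $N'$ the normalizer of $H' = H \cap \mathrm{SL}_2(\Z/p\Z)$. By \Cref{lem:N'}, for $H = C_\ns(p)$ one has $N' = C_\ns^+(p) \cap \mathrm{SL}_2(\Z/p\Z)$ and hence $N'/H' \cong \Z/2\Z$, whereas for $H = C_\ns^+(p)$ (and $p \ge 13 \ne 3$) one has $N'/H' \cong \{1\}$. This pins down the two groups as claimed.

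There is no substantive obstacle here: the only thing to check is the trivial arithmetic fact that none of the four exceptional prime-power levels is prime, after which the corollary is simply the non-split slice of \Cref{thm:powerprimes} with $e = 1$.
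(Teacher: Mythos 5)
Your proposal is correct and is exactly the paper's own (implicit) argument: the paper states \Cref{cor:aut_prime_level} as an immediate specialization of \Cref{thm:powerprimes} to $e=1$, and your verification that $p\ge 13$ satisfies $p^e>11$ and avoids the composite exceptional levels $\{3^3,2^4,2^5,2^6\}$ is the only check needed. The extra paragraph tracing the group structure back through \Cref{prop:Cartan_normalizer_in_GL2Q} and \Cref{lem:N'} is redundant (the isomorphisms are already part of the statement of \Cref{thm:powerprimes}) but harmless.
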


\begin{rem}
Theorem \ref{thm:powerprimes} implies that, for $p^{2e}$ big enough, all the automorphisms of $X_0^*(p^{2e})\cong X_\tx{s}^+(p^e)$ are modular, extending \cite{BakerHas} and \cite{GonzSplit} that treat the cases $X_0^*(p)$ and $X_0^*(p^2)$ and complementing in part \cite{BarsGonzAut} which treats the case of $X_0^*(n)$ for $n$ squarefree. Our techniques (in particular Lemma \ref{lem:aut_general_H})  cannot be generalized to the case $X_0^*(p^e)$ with $e$ odd, because some branch points of the natural map $\HH\to Y_0^+(p^e)$ have the form $\{(E,C),(E/C,E[p^e]/C)\}$ with $E\neq E_i,E_\rho$. 
Anyway, the techniques used in \cite[Lemmas 4, 5, 6]{GonzSplit}, together with Proposition \ref{prop:field_X0+(p^e)}, can be used to prove the modularity of all elements in $\Aut(X_0^*(p^e))$, without restrictions on $e$, for all but finitely many cases.
\end{rem}
It is a natural question to ask whether modular automorphisms such as those described in \Cref{lem:N'} are defined on a small field. The next proposition partially addresses this issue.
\begin{prop} \label{rem:field_supermodular}
Let $n$ be a positive integer, let $H$ be a subgroup of $\GLn$ such that $\det(H)= (\Z/n\Z)^\times$ and let  $H'=H \cap \text{SL}_2(\Z/n\Z)$. Let $M\in \text{SL}_2(\Z)$ such that its reduction $M\bmod n$ normalizes $(H')^T$. Then $M$ defines a modular automorphism of $X_H$ which is defined over the cyclotomic field $\Q(\zeta_n)$. Moreover, this automorphism is defined over $\Q$ if and only if $M\bmod n$ normalizes $H^T$.
\end{prop}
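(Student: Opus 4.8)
The plan is to first verify that $M$ really does define a modular automorphism of $X_H$, then to compute its field of definition by comparing its action on moduli with the Galois action — the whole subtlety being that the Galois action on full level structures is twisted by the cyclotomic character through the Weil pairing. To begin, I record that $\Gamma_H = \{\gamma \in \SLZ : \gamma^T \bmod n \in H\} = \{\gamma \in \SLZ : \gamma \bmod n \in (H')^T\}$, so the hypothesis that $M\bmod n$ normalizes $(H')^T$ gives that $M$ normalizes $\Gamma_H$ inside $\SLZ$; hence $\PP(M)$ normalizes $\PP(\Gamma_H)$ and $M$ induces a modular automorphism $u_M$ of $X_H$. Next I would give the moduli description of $u_M$ on $Y_H(\C)$: using the identification $\Gamma_H\backslash\ol\HH \to X_H(\C)$, $\Gamma_H\tau\mapsto(E_\tau,\phi_\tau)$ of \eqref{eq_complex_points_non_can} with $g=\mathrm{Id}$, together with the equivalence $(E_\tau,\phi_\tau{\circ}\gamma)\sim_H(E_{g(\tau)},\phi_{g(\tau)}{\circ}\bar g^{-T}\gamma)$ recorded before \eqref{eq_complex_points_can_comp} (taken with $g=M$, $\gamma=\bar M^T$), one obtains $u_M(E,\phi)=(E,\phi{\circ}\bar M^T)$. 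Set $\mu:=\bar M^T\in\SLn$; since $\det\mu=1$, the map $u_M$ preserves the Weil pairing $e_n(\phi(e_1),\phi(e_2))$, which on every $(E_\tau,\phi_\tau)$ equals one fixed primitive root $\zeta_n$.

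The heart of the argument is the Galois computation. By \Cref{rem:pmH} I may assume $-\mathrm{Id}\in H$, so $\pm H=H$. For a point $P=(E,\phi)$ with standard Weil pairing $\zeta_n$ and $\sigma\in\Gal(\ol\Q/\Q)$, the conjugate $(E^\sigma,\sigma{\circ}\phi)$ has Weil pairing $\sigma(\zeta_n)=\zeta_n^{\chi(\sigma)}$, where $\chi$ is the cyclotomic character; to renormalize back to the standard component I choose $d_\sigma\in H$ with $\det d_\sigma=\chi(\sigma)^{-1}$, which exists because $\det H=(\Z/n\Z)^\times$, so that $P^\sigma=[(E^\sigma,\sigma\phi\, d_\sigma)]$. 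Since $\mu$ and $d_\sigma$ have constant entries in $\Z/n\Z$ and thus commute with the Galois action on the source, I would compute $(u_M(P))^\sigma=[(E^\sigma,\sigma\phi\,\mu\, d_\sigma)]$ and $u_M(P^\sigma)=[(E^\sigma,\sigma\phi\, d_\sigma\,\mu)]$; evaluating on a point with $\Aut(E)=\{\pm1\}$ (which suffices, as two automorphisms agreeing on infinitely many points coincide), these are equal if and only if the change of basis $\mu^{-1}d_\sigma^{-1}\mu\, d_\sigma$ lies in $\pm H=H$.

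Both assertions then fall out of this criterion. If $\sigma$ fixes $\zeta_n$, then $\chi(\sigma)=1$ and I may take $d_\sigma=\mathrm{Id}$, so the change of basis is the identity and $u_M^\sigma=u_M$; hence $u_M$ is defined over $\Q(\zeta_n)$. For definition over $\Q$ the criterion must hold for every $\sigma$, i.e.\ for $d_\sigma$ of every determinant, and since $\chi$ surjects onto $(\Z/n\Z)^\times$ the requirement becomes $\mu^{-1}d^{-1}\mu\, d\in H$ for all $d\in H$. Multiplying by $d^{-1}$ and letting $d$ range over $H$, this is equivalent to $\mu=\bar M^T$ normalizing $H$; transposing turns this into $\bar M=M\bmod n$ normalizing $H^T$, which is exactly the stated condition.

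The step I expect to be delicate is the Galois computation itself: one must handle the cyclotomic twist of the Weil pairing correctly, i.e.\ the renormalization by $d_\sigma$, keep careful track of the transpose linking $M$ to its moduli action $\mu=\bar M^T$, and remember that $\Aut(E)\supseteq\{\pm1\}$ forces the relevant membership to be read in $\pm H$ rather than $H$ — which is precisely why the reduction to $-\mathrm{Id}\in H$ via \Cref{rem:pmH} is convenient and why the clean statement involves $H^T$. Once these conventions are fixed, everything else is formal.
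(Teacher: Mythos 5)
Your proof is correct in substance and rests on the same two pillars as the paper's own argument: the moduli description of the modular automorphism on the locus of fixed Weil pairing, and the renormalization of a Galois conjugate by an element $d_\sigma\in H$ of determinant $\chi(\sigma)^{-1}$. (Your formula $\phi\circ\bar M^{T}$ versus the paper's $\phi\circ\bar M^{-T}$ only reflects whether one attaches to $M$ the map $\tau\mapsto M(\tau)$ or its inverse, and is immaterial.) The organization differs slightly: the paper gets rationality over $\Q(\zeta_n)$, and the ``if'' half over $\Q$, for free from functoriality --- $(E,\phi)\mapsto(E,\phi\circ m)$ is a natural transformation of the functor coarsely represented by $Y_{H'}^{cc}$ over $\Q(\zeta_n)$ (resp.\ by $Y_H$ over $\Q$ when $m$ normalizes $H$) --- and reserves the explicit Galois computation for the ``only if'' half, choosing $\sigma$ adapted to a given $h\in H$; you instead run one uniform cocycle computation, comparing $(u_M(P))^\sigma$ with $u_M(P^\sigma)$, and read both fields of definition off the single criterion $\mu^{-1}d_\sigma^{-1}\mu\, d_\sigma\in\pm H$. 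Both routes are valid.

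One step deserves a caveat: the opening reduction to $-\mathrm{Id}\in H$ via \Cref{rem:pmH}. It is harmless for modularity and for the $\Q(\zeta_n)$-statement, but in the biconditional it silently replaces ``$M\bmod n$ normalizes $H^T$'' by ``$M\bmod n$ normalizes $(\pm H)^T$'', and these are inequivalent when $-\mathrm{Id}\notin H$; what your argument literally proves is the $(\pm H)^T$-criterion. Nothing is lost in the paper's applications, since Cartan and Cartan-plus subgroups contain the scalars, but for general $H$ the two statements genuinely diverge: for $n=7$, $H=\left\{\smt{a}{0}{0}{d} : a \tx{ a nonzero square}\right\}$ and $\bar M=\smt{0}{-1}{1}{0}$, the hypotheses hold and $\pm H=C_\tx{s}(7)$, so $Y_H=Y_{C_\tx{s}(7)}$ and the automorphism is defined over $\Q$, yet $\bar M$ does not normalize $H^T=H$. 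So the clean statement is the one with $(\pm H)^T$, equivalently the proposition should be read with $-\mathrm{Id}\in H$. Note that the paper's own proof has the mirror image of this wrinkle: its last line deduces $m^{-1}hm\in H$ from an equality of points of $Y_H(\ol\Q)$, whereas the equivalence relation (being effectively modulo $\pm H$, and granting $j(E)\neq 0,1728$, which the paper leaves implicit) only yields $m^{-1}hm\in\pm H$. Your handling of the sign is thus, if anything, the more careful of the two.
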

\begin{proof}
Since $M$ normalizes $\Gamma_H$, it defines a modular automorphism $\Psi$ of $X_H$. From now on, we only look at the restriction of $\Psi$ to $Y_H$. Let $Y_{H'}^{cc}$ be the connected component of $Y_{H'}$ such that $Y_{H'}^{cc}=\Gamma_H\backslash \HH$. Hence the natural map $Y_{H'}\to Y_H$ restricts to an isomorphism between $Y_{H'}^{cc}$ and $Y_H$ defined over $\Q(\zeta_n)$, which is also the field of definition of $Y_{H'}^{cc}$. The curve $Y_{H'}^{cc}/\Q(\zeta_n)$ can be interpreted as the coarse moduli space of elliptic curves with $H'$-structure $(E,\phi)$ such that the Weil pairing of $(\phi\left(\begin{smallmatrix}1\\0\end{smallmatrix}\right), \phi\left(\begin{smallmatrix}0\\1\end{smallmatrix}\right))$ is a fixed root of unity $\zeta$. Moreover, under the isomorphism $Y_H \cong Y_{H'}^{cc}$, we can describe $\Psi$ as  
	\begin{equation}\label{eq:super_mod} 
	\Psi \colon Y_{H'}^{cc} \to Y_{H'}^{cc}, \quad  (E,\phi)\mapsto (E, \phi\circ m),
	\end{equation}
	with $m:=M^{-T} \bmod n$. Since the map $(E,\phi)\mapsto (E, \phi\circ m)$ also defines a natural transformation between the functor coarsely represented by $Y_{H'}^{cc}$ and itself, we deduce that $\Psi$ is defined over $\Q(\zeta_n)$.
	
Notice that for every point $P\in Y_H(\C)$ the image $\Psi(P)$ can be described by \Cref{eq:super_mod} a priori only if we choose for $P$ a representative $(E,\phi)$ such that the Weil pairing of $(\phi\left(\begin{smallmatrix}1\\0\end{smallmatrix}\right), \phi\left(\begin{smallmatrix}0\\1\end{smallmatrix}\right))$ is $\zeta$. 

We now prove that the map $\Psi$ is defined over $\Q$ if and only if $m$ normalizes the whole $H$. One implication is trivial, since, when  $m$ normalizes $H$, the map $(E,\phi)\mapsto (E, \phi\circ m)$ also defines a natural transformation between the functor coarsely represented by $Y_H$ and itself. For the other implication, suppose that $m$ is in the normalizer of $H'$ but not in the normalizer of $H$ and let $h \in H$ be such that $m^{-1}hm \notin H$. Let $(E,\phi)$ be a point in $Y_{H'}^{cc}(\ol \Q)$ such that $E$ is defined over $\Q$. Then, in $Y_H(\ol \Q)$, for every $\sigma \in \Gal(\ol \Q/\Q)$, we have 
\[ \sigma (E, \phi) =  (E, \sigma \circ \phi) =  (E, \sigma \circ \phi \circ h ).
\]
If we choose $\sigma$ such that $\sigma(\zeta_n) = \zeta_n^{\det h^{-1}}$, then the Weil pairing of $(\sigma {\circ} \phi {\circ} h \left(\begin{smallmatrix}1\\0\end{smallmatrix}\right),      \sigma {\circ} \phi {\circ} h  \left(\begin{smallmatrix}1\\0\end{smallmatrix}\right))$ is the same as the Weil pairing of $(\phi\left(\begin{smallmatrix}1\\0\end{smallmatrix}\right), \phi\left(\begin{smallmatrix}0\\1\end{smallmatrix}\right))$ which is equal to $\zeta$. Hence, we can use \Cref{eq:super_mod} to deduce that
\[
\begin{aligned}
\sigma (\Psi (E,\phi)) &= \sigma(E, \phi\circ m) = (E, \sigma \circ \phi\circ m), \\
\Psi (\sigma (E,\phi)) &= \Psi (E, \sigma \circ \phi \circ h) = (E, \sigma \circ \phi \circ h\circ m).
\end{aligned}
\]
If $\Psi$ was defined over $\Q$, we would have $\sigma (\Psi (E,\phi))=\Psi (\sigma (E,\phi))$, implying that $m^{-1} h m$ belongs to $ H$ which is a contradiction.
\end{proof}
\begin{rem}
The proposition above, together with \Cref{thm:powerprimes}, implies that, given a  prime power $p^e>11$ not in $\{3^3, 2^4, 2^5, 2^6\}$, all the automorphisms of a Cartan curve of level $p^e$ are defined over the cyclotomic field $\Q(\zeta_{p^e})$. In general, not all such automorphisms are defined over $\Q$, even though this is true when the prime $p$ is at least 5 or when the Cartan group is non-split.
\end{rem}

\newpage

\section{Appendix}\label{sec:appendix}

\subsection*{Characters of $\GL(\Z/2^e\Z)$}
Let $G:=\GL(\Z/2^e\Z)$. For each $H<G$, let $\chi_H \colon G \to \Q$ be the character of the representation $\Q[G/H] = \bigoplus gH \cdot \Q$, computed using \Cref{eq:characters}. 
Every element of $G$ is conjugated to a unique element appearing in the first column, hence the table determines the characters $\chi_H$ for $H$ appearing in \Cref{lem:reprisom} or in \cite[Theorem~$1.1$]{ChenPowers}. In the first column we have $\lambda, a \in (\Z/2^e\Z)^\times$, $b \in (\Z/2^e\Z)$, $k\in \{ 1,\ldots ,e{-}1\}$, and $u \in (\Z/2^{e-k}\Z)^\times$.
\setlength\tabcolsep{2 pt}
\renewcommand*{\arraystretch}{1.3}
\begin{center}
\captionof*{table}{Character table.}
\begin{tabular}{c||c|c|c|c|c|c|c } 
	\toprule
	& $B_r, r \ge 0$ & $T_0$ & $T_r, r > 0$ & $C_s$ & $C_s^+$ & $C_\ns$ & $C_\ns^+$ \\
	\midrule
	\midrule
	$\lambda \tx{Id}$
	& $3{\cdot}2^{2r}$ & 1 & $3{\cdot}2^{2r-1}$ & $ 3{\cdot}2^{2e-1} $ & $3{\cdot}2^{2e-2}$ & $2^{2e-1}$ & $2^{2e-2}$ \\ 
	\hline
	$\!\begin{array}{l} \smt 0a1b \\ b \tx{ odd} \end{array}$  
	& 
	0
	&1& 
	0
	& 
	0 
	& 
	0
	& 
	2
	& 
	1
	\\
	\hline
	$\!\begin{array}{l} \smt 0a1b  \\ b \tx{ even} \end{array}$
	& 
	$\!\begin{array}{cc} 
	1 \!& \!\tx{if }r {=} 0 \\
	0 \!& \!\tx{if }r {>} 0
	\end{array}$
	&1& 
	0
	& 
	0
	& 
	$\!\begin{array}{cc}
	2^{e-1}  \!& \!\tx{if }b {=} 0 \\
	0  \!& \!\tx{if }b {\neq} 0
	\end{array}$
	& 
	0
	& 
	$\!\begin{array}{cc} 
	2^{e-1} \!& \!\tx{if }b {=} 0 \\
	0 \!& \!\tx{if }b {\neq} 0
	\end{array}$ \\
	\hline
	$\smt{\lambda}{0}{0}{\lambda+2^k u}$ 
	& 
	$ \!\begin{array}{cc} 3{\cdot}2^{2r} \!& \!\tx{if }r{<}k \\   2^{2k+1}  \!& \!\tx{if }r{\ge}k \end{array}$
	&1& 
	$ \!\begin{array}{cc} 3{\cdot}2^{2r-1} \!& \!\tx{if }r{\le}k \\ 2^{2k+1} \!& \!\tx{if }r{>}k\end{array}$
	& 
	$2^{2k+1}$
	& 
	$2^{2k}$ 
	& 
	0
	& 
	0
	\\
	\hline
	$\smt{\lambda}{2^k u}{2^k}{\lambda}$
	& 
	$\!\begin{array}{cc} 3{\cdot}2^{2r} \!& \!\tx{if }r{<}k \\  2^{2r} \!& \!\tx{if }r{=}k \\ 0  \!& \!\tx{if }r{>}k \end{array}$
	&1& 
	$\!\begin{array}{cc} 3{\cdot}2^{2r-1} \!& \!\tx{if }r{\le}k \\ 0 \!& \!\tx{if }r{>}k \end{array}$
	& 
	0
	& 
	0
	& 
	0
	& 
	0
	\\
	\hline
	$\smt{\lambda}{2^k u}{2^k}{\lambda+2^k}$
	& 
	$\!\begin{array}{cc} 3{\cdot}2^{2r} \!& \!\tx{if }r{<}k \\ 0 \!& \!\tx{if }r{\ge}k \end{array}$
	&1& 
	$\!\begin{array}{cc} 3{\cdot}2^{2r-1} \!& \!\tx{if }r{\le}k \\ 0 \!& \!\tx{if }r{>}k \end{array}$
	& 
	0
	& 
	0
	& 
	$2^{2k+1}$
	& 
	$ 2^{2k}$
	\\
	\hline
	$\smt{\lambda}{2^k u}{2^k}{\lambda+2^{k+1}}$
	& 
	$ \!\begin{array}{cc} 3{\cdot}2^{2r} \!& \!\tx{if } r{<}k \\ 2^{2r} \!& \!\tx{if }r{=}k \\  0 \!& \!\tx{if }r{>}k \end{array}$
	&1& 
	$ \!\begin{array}{cc} 3{\cdot}2^{2r-1} \!& \!\tx{if }r {\le}k \\ 0  \!& \!\tx{if }r{>}k \end{array}$
	& 
	0
	& 
	0
	& 
	0
	& 
	0
	\\ 
	\bottomrule
\end{tabular}
\end{center}
\label{tableApp}

\subsection*{Data for Cartan modular curves of level $n\le64$}
In \Cref{tab:lowlevels}, we collect some relevant data about totally split or totally non-split Cartan modular curves of low level. Here $n$ is the level of the curve, $g$ is the genus, $\widetilde{\mathrm{cm}}$ is the dimension of the CM part after removing all factors isogenous to elliptic curves over $\Q$, and $A$ is the Abramovich's lower bound for the gonality of the curve (see \cite{AbrGonality}). The values were computed through the script at \cite{Script_aut} using the data on modular forms at \cite{lmfdb}.

We write in \textit{italic} the cases of genus $0$ or $1$: in these cases we know that there are infinitely many automorphisms which are not modular. We write in \textbf{bold} the cases where we have $\widetilde{\mathrm{cm}}<\frac{g-1}2$ and $A>2(\ell{+}1)$ for some prime $\ell\ge 5$ not dividing $n$: in these cases we are able to prove that all the automorphisms are modular using \Cref{JacFactorEnd} (we can apply it because of the first inequality) and \Cref{lem:aut_general_H} (we can apply it because of the second inequality). In the remaining cases we are not able to determine whether all the automorphisms are modular just by looking at $g$, $\widetilde{\mathrm{cm}}$ and $A$. We notice that in \Cref{thm:Cartan_aut} we mostly use the hypothesis ``$n\ge10^{400}$'' in order to conclude that the inequality $\widetilde{\mathrm{cm}}<\frac{g-1}2$ holds. On the other hand, one can check that the inequality $A>2(\ell{+}1)$ holds starting from a much smaller $n$. When looking at explicit examples, we notice that,  except for $X_\ns^+(48)$, in all the cases of genus at least $2$ in the table, we have $\widetilde{\mathrm{cm}}<\frac{g-1}2$ and this implies that all the automorphisms are defined over a compositum of quadratic fields.

\begin{rem}\label{rem:ns+16}
We highlight two interesting examples with exceptional automorphisms. The modular curves $\Xns^+(16)$ and $\Xns^+(20)$ have genus two, and hence they are hyperelliptic. An equation for both curves and the description of all rational points can be found in \cite[Section 5]{BaranClass}. We computed the automorphism group of both curves using the built-in function in MAGMA: the automorphism group of $\Xns^+(16)$ is $\Z/2\Z\times\Z/2\Z$ while the automorphism group of $\Xns^+(20)$ is $\Z/2\Z$. All the non-trivial automorphisms are exceptional.
Furthermore, in the case of $\Xns^+(16)$ the exceptional automorphisms give a non-CM rational point.
\end{rem}

\begin{table}
\captionof{table}{Data for low level cases.}\label{tab:lowlevels}
    \begin{minipage}{0.48\textwidth}

\setlength\tabcolsep{0.5 pt}
\renewcommand*{\arraystretch}{1.1}
\begin{center}
\begin{tabular}{C{0.6cm}|C{0.5cm}C{0.5cm}C{0.4cm}|C{0.5cm}C{0.5cm}C{0.4cm}|C{0.5cm}C{0.5cm}C{0.4cm}|C{0.7cm}C{0.6cm}C{0.5cm} } 
	\toprule
& \multicolumn{3}{c}{$X_{\text{ns}}^+(n)$} & \multicolumn{3}{|c}{$X_{\text{ns}}(n)$} & \multicolumn{3}{|c}{$X_{\text{s}}^+(n)$} & \multicolumn{3}{|c}{$X_{\text{s}}(n)$} \\
	\midrule
	 $n$ & $g$ & $\widetilde{\mathrm{cm}}$ & $A$ & $g$ & $\widetilde{\mathrm{cm}}$ & $A$ & $g$ & $\widetilde{\mathrm{cm}}$ & $A$ & $g$ & $\widetilde{\mathrm{cm}}$ & $A$ \\
	\midrule
	1 & \textit{0}&\textit{0}&\textit{1} & \textit{0}&\textit{0}&\textit{1} & \textit{0}&\textit{0}&\textit{1} & \textit{0}&\textit{0}&\textit{1}\\ 
	\hline
	2 & \textit{0}&\textit{0}&\textit{1} & \textit{0}&\textit{0}&\textit{1} & \textit{0}&\textit{0}&\textit{1} & \textit{0}&\textit{0}&\textit{1}\\ 
	\hline
	3 & \textit{0}&\textit{0}&\textit{1} & \textit{0}&\textit{0}&\textit{1} & \textit{0}&\textit{0}&\textit{1} & \textit{0}&\textit{0}&\textit{1}\\ 
	\hline
	4 & \textit{0}&\textit{0}&\textit{1} & \textit{0}&\textit{0}&\textit{1} & \textit{0}&\textit{0}&\textit{1} & \textit{0}&\textit{0}&\textit{1}\\ 
	\hline
	5 & \textit{0}&\textit{0}&\textit{1} & \textit{0}&\textit{0}&\textit{1} & \textit{0}&\textit{0}&\textit{1} & \textit{0}&\textit{0}&\textit{1}\\ 
	\hline
	6 & \textit{0}&\textit{0}&\textit{1} & \textit{1}&\textit{0}&\textit{1} & \textit{0}&\textit{0}&\textit{1} & \textit{1}&\textit{0}&\textit{1}\\ 
	\hline
	7 & \textit{0}&\textit{0}&\textit{1} & \textit{1}&\textit{0}&\textit{1} & \textit{0}&\textit{0}&\textit{1} & \textit{1}&\textit{0}&\textit{1}\\ 
	\hline
	8 & \textit{0}&\textit{0}&\textit{1} & \textit{1}&\textit{0}&\textit{1} & \textit{1}&\textit{0}&\textit{1} & 3&0&1\\ 
	\hline
	9 & \textit{0}&\textit{0}&\textit{1} & 2&0&1 & \textit{1}&\textit{0}&\textit{1} & 4&0&1\\ 
	\hline
	10 & \textit{0}&\textit{0}&\textit{1} & \textit{1}&\textit{0}&\textit{1} & \textit{1}&\textit{0}&\textit{1} & 7&0&2\\ 
	\hline
	11 & \textit{1}&\textit{0}&\textit{1} & 4&0&1 & 2&0&1 & 6&0&2\\ 
	\hline
	12 & \textit{0}&\textit{0}&\textit{1} & 3&0&1 & 3&0&2 & 13&0&3\\ 
	\hline
	13 & 3&0&1 & 8&0&2 & 3&0&1 & 8&0&2\\ 
	\hline
	14 & \textit{0}&\textit{0}&\textit{1} & 5&0&1 & 3&0&2 & 17&0&3\\ 
	\hline
	15 & \textit{1}&\textit{0}&\textit{1} & 7&2&2 & 4&0&2 & 19&2&4\\ 
	\hline
	16 & 2&0&1 & 7&2&2 & 9&0&2 & 21&2&4\\ 
	\hline
	17 & 6&0&2 & 15&0&3 & 7&0&2 & 17&0&3\\ 
	\hline
	18 & \textit{0}&\textit{0}&\textit{1} & 7&0&1 & 7&0&3 & 37&0&6\\ 
	\hline
	19 & 8&0&2 & 20&0&3 & 9&0&2 & 22&0&4\\ 
	\hline
	20 & 2&0&1 & 9&0&2 & 10&0&4 & 43&0&7\\ 
	\hline
	21 & \textit{1}&\textit{0}&\textit{2} & 15&2&3 & 9&0&3 & 41&2&6\\ 
	\hline
	22 & \textit{1}&\textit{0}&\textit{1} & 13&2&2 & 10&0&4 & 49&2&7\\ 
	\hline
	23 & 13&0&3 & 31&3&5 & 15&0&3 & 35&3&5\\ 
	\hline
	24 & \textit{1}&\textit{0}&\textit{1} & 13&0&2 & 17&0&6 & 73&0&11\\ 
	\hline
	25 & 14&0&3 & 32&0&5 & 22&0&4 & 48&0&7\\ 
	\hline
	26 & 3&0&2 & 21&0&3 & 15&0&5 & 71&0&10\\ 
	\hline
	27 & 12&0&3 & 32&0&5 & 28&0&5 & 64&0&9\\ 
	\hline
	28 & 4&0&2 & 23&0&3 & 21&0&6 & 89&0&12\\ 
	\hline
	29 & 24&0&4 & 54&0&8 & 26&0&4 & 58&0&8\\ 
	\hline
	30 & \textit{1}&\textit{0}&\textit{2} & 17&2&3 & 16&0&10 & \textbf{145}&\textbf{6}&\textbf{19}\\ 
	\hline
	31 & 28&0&5 & 63&3&9 & 30&0&5 & 67&3&9\\ 
	\hline
	32 & 14&2&3 & 35&3&5 & 49&8&7 & \textbf{105}&\textbf{18}&\textbf{14}\\ 
	\bottomrule
\end{tabular}
\end{center}
\label{tablelowlvls1}
    \end{minipage}
    \hfill
    \begin{minipage}{0.49\linewidth}
\setlength\tabcolsep{0.5 pt}
\renewcommand*{\arraystretch}{1.1}
\begin{center}
\begin{tabular}{C{0.6cm}|C{0.6cm}C{0.6cm}C{0.5cm}|C{0.7cm}C{0.6cm}C{0.5cm}|C{0.7cm}C{0.6cm}C{0.5cm}|C{0.7cm}C{0.6cm}C{0.5cm} }
	\toprule
& \multicolumn{3}{c}{$X_{\text{ns}}^+(n)$} & \multicolumn{3}{|c}{$X_{\text{ns}}(n)$} & \multicolumn{3}{|c}{$X_{\text{s}}^+(n)$} & \multicolumn{3}{|c}{$X_{\text{s}}(n)$} \\
	\midrule
	 $n$ & $g$ & $\widetilde{\mathrm{cm}}$ & $A$ & $g$ & $\widetilde{\mathrm{cm}}$ & $A$ & $g$ & $\widetilde{\mathrm{cm}}$ & $A$ & $g$ & $\widetilde{\mathrm{cm}}$ & $A$ \\
	\midrule
	33 & 7&2&3 & 45&2&6 & 25&2&7 & \textbf{109}&\textbf{2}&\textbf{14}\\ 
	\hline
	34 & 6&0&3 & 37&0&5 & 28&0&9 & \textbf{127}&\textbf{0}&\textbf{17}\\ 
	\hline
	35 & 13&0&4 & 59&6&8 & 27&0&8 & 117&6&15\\ 
	\hline
	36 & 5&0&2 & 31&0&4 & 43&0&12 & \textbf{181}&\textbf{0}&\textbf{23}\\ 
	\hline
	37 & 43&0&6 & 94&0&12 & 45&0&7 & \textbf{98}&\textbf{0}&\textbf{13}\\ 
	\hline
	38 & 8&0&3 & 49&2&6 & 36&0&10 & \textbf{161}&\textbf{2}&\textbf{20}\\ 
	\hline
	39 & 13&0&5 & 67&6&9 & 36&0&10 & \textbf{155}&\textbf{6}&\textbf{20}\\ 
	\hline
	40 & 10&0&3 & 45&4&6 & 49&0&13 & \textbf{205}&\textbf{12}&\textbf{26}\\ 
	\hline
	41 & 54&0&8 & \textbf{117}&\textbf{0}&\textbf{15} & 57&0&8 & \textbf{123}&\textbf{0}&\textbf{16}\\ 
	\hline
	42 & \textit{1}&\textit{0}&\textit{3} & 37&2&5 & \textbf{33}&\textbf{0}&\textbf{18} & \textbf{289}&\textbf{6}&\textbf{36}\\ 
	\hline
	43 & 60&0&8 & \textbf{130}&\textbf{0}&\textbf{16} & 63&0&9 & \textbf{136}&\textbf{0}&\textbf{17}\\ 
	\hline
	44 & 13&0&4 & 63&4&8 & \textbf{55}&\textbf{2}&\textbf{14} & \textbf{229}&\textbf{8}&\textbf{28}\\ 
	\hline
	45 & 17&0&5 & 79&2&10 & 55&0&15 & \textbf{235}&\textbf{14}&\textbf{29}\\ 
	\hline
	46 & 13&0&5 & 73&3&9 & \textbf{55}&\textbf{0}&\textbf{15} & \textbf{241}&\textbf{9}&\textbf{29}\\ 
	\hline
	47 & 73&0&10 & \textbf{157}&\textbf{5}&\textbf{19} & 77&0&10 & \textbf{165}&\textbf{5}&\textbf{20}\\ 
	\hline
	48 & 9&4&4 & 57&16&7 & \textbf{81}&\textbf{4}&\textbf{21} & \textbf{337}&\textbf{20}&\textbf{41}\\ 
	\hline
	49 & 69&0&10 & \textbf{151}&\textbf{0}&\textbf{19} & \textbf{94}&\textbf{6}&\textbf{13} & \textbf{201}&\textbf{12}&\textbf{25}\\ 
	\hline
	50 & 14&0&5 & 73&0&9 & \textbf{77}&\textbf{0}&\textbf{20} & \textbf{331}&\textbf{0}&\textbf{40}\\ 
	\hline
	51 & 25&5&8 & \textbf{121}&\textbf{8}&\textbf{15} & \textbf{64}&\textbf{5}&\textbf{17} & \textbf{271}&\textbf{8}&\textbf{33}\\ 
	\hline
	52 & 21&0&6 & 93&0&11 & \textbf{78}&\textbf{0}&\textbf{20} & \textbf{323}&\textbf{0}&\textbf{39}\\ 
	\hline
	53 & \textbf{96}&\textbf{0}&\textbf{13} & \textbf{204}&\textbf{0}&\textbf{25} & \textbf{100}&\textbf{0}&\textbf{13} & \textbf{212}&\textbf{0}&\textbf{26}\\ 
	\hline
	54 & 12&0&5 & 73&0&9 & \textbf{100}&\textbf{0}&\textbf{26} & \textbf{433}&\textbf{0}&\textbf{52}\\ 
	\hline
	55 & 38&0&10 & \textbf{163}&\textbf{6}&\textbf{20} & \textbf{70}&\textbf{0}&\textbf{18} & \textbf{295}&\textbf{6}&\textbf{35}\\ 
	\hline
	56 & 21&0&6 & \textbf{101}&\textbf{4}&\textbf{12} & \textbf{97}&\textbf{4}&\textbf{24} & \textbf{401}&\textbf{12}&\textbf{48}\\ 
	\hline
	57 & 31&5&9 & \textbf{153}&\textbf{8}&\textbf{18} & \textbf{81}&\textbf{5}&\textbf{20} & \textbf{341}&\textbf{8}&\textbf{40}\\ 
	\hline
	58 & 24&0&8 & \textbf{121}&\textbf{0}&\textbf{15} & \textbf{91}&\textbf{0}&\textbf{23} & \textbf{391}&\textbf{0}&\textbf{46}\\ 
	\hline
	59 & \textbf{121}&\textbf{3}&\textbf{15} & \textbf{256}&\textbf{3}&\textbf{30} & \textbf{126}&\textbf{3}&\textbf{16} & \textbf{266}&\textbf{3}&\textbf{31}\\ 
	\hline
	60 & 7&0&5 & 73&4&9 & \textbf{79}&\textbf{0}&\textbf{38} & \textbf{649}&\textbf{12}&\textbf{76}\\ 
	\hline
	61 & \textbf{131}&\textbf{0}&\textbf{17} & \textbf{276}&\textbf{0}&\textbf{33} & \textbf{135}&\textbf{0}&\textbf{17} & \textbf{284}&\textbf{0}&\textbf{34}\\ 
	\hline
	62 & 28&0&9 & \textbf{141}&\textbf{3}&\textbf{17} & \textbf{105}&\textbf{0}&\textbf{27} & \textbf{449}&\textbf{9}&\textbf{53}\\ 
	\hline
	63 & 35&0&10 & \textbf{171}&\textbf{10}&\textbf{20} & \textbf{109}&\textbf{0}&\textbf{27} & \textbf{457}&\textbf{14}&\textbf{53}\\ 
	\hline
	64 & 70&6&9 & \textbf{155}&\textbf{14}&\textbf{18} & \textbf{225}&\textbf{30}&\textbf{27} & \textbf{465}&\textbf{62}&\textbf{54}\\ 
	\bottomrule
\end{tabular}
\end{center}
\label{tablelowlvls2}
    \end{minipage}
\end{table}

\newpage
\bibliographystyle{amsalpha}
\bibliography{bibliobranch}{}

\end{document}